  \theoremstyle{plain}
    \newtheorem{thm}{Theorem}[section]
    \newtheorem{prop}[thm]{Proposition}
    \newtheorem{subsec}[thm]{}
\theoremstyle{definition}
    \newtheorem{defn}[thm]{Definition}
        \newtheorem{remark}[thm]{Remark}
    \newtheorem{exam}[thm]{Example}
\theoremstyle{remark}
\title{}
\author{}
\date{}
\begin{document}

\title{{\sf Cohomology of weighted Rota-Baxter Lie algebras and Rota-Baxter paired operators}}

\author{\em Apurba Das}
%\address{Department of Mathematics and Statistics,
%Indian Institute of Technology, Kanpur 208016, Uttar Pradesh, India.}
%\email{apurbadas348@gmail.com}

\maketitle

\begin{center}
{\em Department of Mathematics and Statistics,\\
Indian Institute of Technology, Kanpur 208016, Uttar Pradesh, India.\\
Email: apurbadas348@gmail.com}
\end{center}

%\curraddr{}
%\email{}

%\subjclass[2010]{}
%\keywords{Weighted Rota-Baxter Lie algebras, Rota-Baxter paired operators, Cohomology, Deformations.}

\medskip

\medskip

\begin{abstract}
{In this paper, we define representations and cohomology of weighted Rota-Baxter Lie algebras. As applications of cohomology, we study abelian extensions and formal $1$-parameter deformations weighted Rota-Baxter Lie algebras. Finally, we consider weighted Rota-Baxter paired operators that induces a weighted Rota-Baxter Lie algebra and a representation of it. We define suitable cohomology for such paired operators that govern deformation.}
\end{abstract}

\medskip

\noindent {\bf 2020 MSC classification:} 17B38, 17B56, 16S80.

\noindent {\bf Keywords:} Weighted Rota-Baxter Lie algebras, Representations, Cohomology, Deformations, Rota-Baxter paired operators.

\medskip

%\noindent {\sf Date of resubmission:} July 26, 2021.

\thispagestyle{empty}

\tableofcontents

\vspace{0.2cm}

\section{Introduction}

Rota-Baxter operators were first appeared in the work of Baxter in his study of the fluctuation theory \cite{baxter} and further developed by Rota in combinatorics \cite{rota}. Subsequently, Cartier \cite{cartier} and Atkinson \cite{atkinson} studied further properties of Rota-Baxter operators. In last twenty years, Rota-Baxter operators on associative algebras pay very much attention due to its connection with combinatorics of shuffle algebras \cite{guo-kei}, Yang-Baxter equations \cite{aguiar}, dendriform algebras \cite{aguiar,fard-guo}, renormalizations in quantum field theory \cite{connes}, multiple zeta values in number theory \cite{guo-zeta} and splitting of algebraic operads \cite{bai-spl}. See \cite{guo-book} for more details about Rota-Baxter operators on associative algebras. On the other hand, Rota-Baxter operators on Lie algebras was first considered by Kuperscmidt in the study of classical $r$-matrices \cite{kuper}. They are also connected with pre-Lie algebras \cite{bai-spl}, integrable systems \cite{semenov} and combinatorics of Lyndon-Shirshov words \cite{qiu}. 

\medskip

Deformation theory of some algebraic structure goes back to Gerstenhaber \cite{gers} for associative algebras and Nijenhuis-Richardson \cite{nij-ric} for Lie algebras. Recently, deformation theory has been adapted to Rota-Baxter operators on Lie algebras \cite{tang} and subsequently developed in \cite{das-rota} for associative algebras. In both these papers, the authors only considered deformations of Rota-Baxter operators by keeping the underlying algebras intact. In \cite{lazarev,das-mishra} the authors dealt with deformations of Rota-Baxter algebras in which they simultaneously deform Rota-Baxter operators and the underlying algebras. Note that all these works are concern about Rota-Baxter operators of weight zero.

\medskip

Rota-Baxter operators with arbitrary weight (also called weighted Rota-Baxter operators) was considered in \cite{bai-w,guo-lax}. They are related with tridendriform algebras \cite{fard}, post-Lie algebras and modified Yang-Baxter equations \cite{guo-lax}, weighted infinitesimal bialgebras and weighted Yang-Baxter equations \cite{aybe}, combinatorics of rooted forests \cite{forest}, among others. Recently, the authors in \cite{sheng-w1} defined the cohomology of Rota-Baxter operators of weight $1$ on Lie algebras and Lie groups. This motivates the present author to study cohomology and deformations of weighted Rota-Baxter operators on both associative and Lie algebras \cite{das-weighted}. However, the simultaneous deformations of (associative) algebras and weighted Rota-Baxter operators are considered in a recent preprint of Wang and Zhou \cite{zhou}. More precisely, they considered weighted Rota-Baxter associative algebras and define cohomology of them with coefficients in a suitable Rota-Baxter bimodule. When considering the cohomology with coefficients in itself, it governs the simultaneous deformations of algebras and weighted Rota-Baxter operators.

\medskip

Our aim in this paper is to apply the approach of \cite{zhou} to weighted Rota-Baxter Lie algebras. More precisely, we first consider representations of weighted Rota-Baxter Lie algebras and provide various constructions of representations. Then we define the cohomology of a weighted Rota-Baxter Lie algebra with coefficients in a representation. This cohomology is obtained as a byproduct of the standard Chevalley-Eilenberg cohomology of the underlying Lie algebra and the cohomology of the underlying weighted Rota-Baxter operator. When the weight is zero, our cohomology coincides with the one introduced in \cite{lazarev}. We interpret our second cohomology group as the isomorphism classes of abelian extensions of weighted Rota-Baxter Lie algebras. Then we consider deformations of weighted Rota-Baxter Lie algebras in which we simultaneously deform underlying Lie algebras and weighted Rota-Baxter operators. The infinitesimals of such deformations are $2$-cocycles in the cohomology and equivalent deformations produce cohomologous $2$-cocycles. Hence they correspond to the same element in the second cohomology group. We also find a sufficient condition for the rigidity of a weighted Rota-Baxter Lie algebra.

\medskip

Finally, given a Lie algebra and a representation, we introduce Rota-Baxter paired operators that induces a weighted Rota-Baxter Lie algebra and representation of it. The terminology of Rota-Baxter paired operators is motivated by a paper by Zheng, Guo and Zhang \cite{guo-rbp} where the authors introduced Rota-Baxter paired modules in the associative context. We define a suitable differential graded Lie algebra that characterize Rota-Baxter paired operators as its Maurer-Cartan elements. This suggests us to define the cohomology of Rota-Baxter paired operators that control formal deformations.

\medskip

The paper is organized as follows. In Section \ref{sec-2} we consider weighted Rota-Baxter Lie algebras and their representations. We also give several new constructions of representations. The cohomology of a weighted Rota-Baxter Lie algebra with coefficients in a representation is defined in Section \ref{sec-3}. Applications of such cohomology to abelian extensions and formal $1$-parameter deformations are given in Section \ref{sec-4}. Finally, in Section \ref{sec-rbp}, we define Rota-Baxter paired operators. We provide the cohomology of such paired operators and consider their deformations.

\medskip

All vector spaces, (multi)linear maps, wedge products are over a field {\bf k} of characteristic $0$.

\section{Representations of weighted Rota-Baxter Lie algebras}\label{sec-2}
In this section, we consider weighted Rota-Baxter Lie algebras \cite{tang-2} and introduce their representations. We also provide various examples and new constructions. Let $\lambda \in {\bf k}$ be a fixed scalar unless specified otherwise.

%In the following, we define representations of weighted Rota-Baxter Lie algebras. We start with the following definition.

\begin{defn}
(i) Let $\mathfrak{g}$ be a Lie algebra. A linear map $\mathfrak{T} : \mathfrak{g} \rightarrow \mathfrak{g}$ is said to be a $\lambda$-weighted Rota-Baxter operator if $\mathfrak{T}$ satisfies
\begin{align}\label{lam-rota-id}
[ \mathfrak{T}(x), \mathfrak{T}(y)] = \mathfrak{T} \big( [\mathfrak{T}(x), y] + [ x, \mathfrak{T} (y)] + \lambda [x, y]   \big), ~\text{ for } x, y \in \mathfrak{g}.
\end{align}

(ii) A $\lambda$-weighted Rota-Baxter Lie algebra is a pair $(\mathfrak{g}, \mathfrak{T})$ consisting of a Lie algebra $\mathfrak{g}$ together with a $\lambda$-weighted Rota-Baxter operator on it.
\end{defn}

\medskip

\begin{exam}
(i) For any Lie algebra $\mathfrak{g}$, the pair $(\mathfrak{g}, \mathrm{id}_\mathfrak{g})$ is a $(-1)$-weighted Rota-Baxter Lie algebra.

\medskip

(ii) Let $(\mathfrak{g}, \mathfrak{T})$ be a $\lambda$-weighted Rota-Baxter Lie algebra. Then for any $\mu \in {\bf k}$, the pair $(\mathfrak{g}, \mu \mathfrak{T})$ is a $(\mu \lambda)$-weighted Rota-Baxter Lie algebra.

\medskip

(iii) Let $(\mathfrak{g}, \mathfrak{T})$ be a $\lambda$-weighted Rota-Baxter Lie algebra. Then $(\mathfrak{g}, -\lambda \mathrm{id}_\mathfrak{g} - \mathfrak{T})$ is so.

\medskip

(iv) Given a $\lambda$-weighted Rota-Baxter Lie algebra $(\mathfrak{g}, \mathfrak{T})$ and an automorphism $\psi \in \mathrm{Aut}(\mathfrak{g})$ of the Lie algebra $\mathfrak{g}$, the pair $(\mathfrak{g}, \psi^{-1} \circ \mathfrak{T} \circ \psi)$ is a $\lambda$-weighted Rota-Baxter Lie algebra.

\medskip

(v) Let $\mathfrak{g}$ be a Lie algebra which splits as the direct sum of two subalgebras $\mathfrak{g}_{-}$ and $\mathfrak{g}_{+}$. Then $(\mathfrak{g}, \mathfrak{T})$ is a $\lambda$-weighted Rota-Baxter Lie algebra, where $\mathfrak{T}: \mathfrak{g} \rightarrow \mathfrak{g}$ is given by
\begin{align*}
\mathfrak{T}(x_{-}, x_{+}) = (0, - \lambda x_{+}), ~ \text{ for } ~ (x_{-}, x_{+}) \in \mathfrak{g}_{-} \oplus \mathfrak{g}_{+} = \mathfrak{g}.
\end{align*}

\medskip

(vi) This example generalizes the previous one. Let $\mathfrak{g}$ be a Lie algebra which splits as the direct sum of three subalgebras $\mathfrak{g}_{-}$, $\mathfrak{g}_0$ and $\mathfrak{g}_{+}$ in which $\mathfrak{g}_{-}$ and $\mathfrak{g}_{+}$ are both $\mathfrak{g}_0$-modules (i.e., representations of $\mathfrak{g}_0$). If $(\mathfrak{g}_0, \mathfrak{T}_0)$ is a $\lambda$-weighted Rota-Baxter Lie algebra then $(\mathfrak{g}, \mathfrak{T})$ is so, where $\mathfrak{T}: \mathfrak{g} \rightarrow \mathfrak{g}$ is given by
\begin{align*}
\mathfrak{T} (x_-, x_0, x_+) = (0, \mathfrak{T}_0 (x_0), - \lambda x_+), ~ \text{ for } (x_-, x_0, x_+) \in \mathfrak{g}.
\end{align*}
\end{exam}

\begin{defn}
Let $(\mathfrak{g}, \mathfrak{T})$ and $(\mathfrak{g}', \mathfrak{T}')$ be two $\lambda$-weighted Rota-Baxter Lie algebras. A morphism from  $(\mathfrak{g}, \mathfrak{T})$ to $(\mathfrak{g}', \mathfrak{T}')$ is a Lie algebra homomorphism $\phi : \mathfrak{g} \rightarrow \mathfrak{g}'$ satisfying additionally $\phi \circ \mathfrak{T} = \mathfrak{T}' \circ \phi.$ It is called an isomorphism if $\phi$ is so.
\end{defn}

Let $\mathfrak{g}$ be a Lie algebra. 
Recall that a representation of $\mathfrak{g}$ is a vector space $\mathcal{V}$ with a linear map (called the action map) $\rho : \mathfrak{g} \rightarrow \mathrm{End}(\mathcal{V})$ satisfying
\begin{align*}
\rho [x,y] =  \rho (x) \circ \rho (y) - \rho (y) \circ \rho (x), ~ \text{ for } x, y \in \mathfrak{g}.
\end{align*}
We denote a representation of $\mathfrak{g}$ as above by $(\mathcal{V}, \rho)$ or simply by $\mathcal{V}$ when no confusion arises. Note that any Lie algebra $\mathfrak{g}$ is a representation of itself with the action map $\rho : \mathfrak{g} \rightarrow \mathrm{End}(\mathfrak{g})$ given by $\rho (x)(y) = [x, y]$, for $x, y \in \mathfrak{g}$. This is called the adjoint representation.

\begin{defn}
Let $(\mathfrak{g}, \mathfrak{T})$ be a $\lambda$-weighted Rota-Baxter Lie algebra. A representation of it is a pair $(\mathcal{V}, \mathcal{T})$ in which $\mathcal{V} = (\mathcal{V}, \rho)$ is a representation of the Lie algebra $\mathfrak{g}$ and $\mathcal{T} : \mathcal{V} \rightarrow \mathcal{V}$ is a linear map satisfying
\begin{align}\label{lam-rep}
\rho (Tx) (\mathcal{T} u) = \mathcal{T} \big( \rho (\mathfrak{T}x) u + \rho (x) (\mathcal{T}u) + \lambda \rho (x) u  \big), ~\text{ for } x \in \mathfrak{g}, u \in \mathcal{V}.
\end{align}
\end{defn}

\begin{exam}
Any $\lambda$-weighted Rota-Baxter Lie algebra $(\mathfrak{g}, \mathfrak{T})$ is a representation of itself. We call this the adjoint representation.
\end{exam}

\begin{exam}
Let $\mathfrak{g}$ be a Lie algebra and $\mathcal{V}$ be a representation of it. Then the pair $(\mathcal{V}, \mathrm{id}_\mathcal{V})$ is a representation of the $(-1)$-weighted Rota-Baxter Lie algebra $(\mathfrak{g}, \mathrm{id}_\mathfrak{g})$.
\end{exam}

\begin{exam}
Let $(\mathfrak{g}, \mathfrak{T})$ be a $\lambda$-weighted Rota-Baxter Lie algebra and $(\mathcal{V}, \mathcal{T})$ be a representation of it. Then for any scalar $\mu \in {\bf k}$, the pair $(\mathcal{V}, \mu \mathcal{T})$ is a representation of the $(\mu \lambda)$-weighted Rota-Baxter Lie algebra $(\mathfrak{g}, \mu \mathfrak{T})$.
\end{exam}

%Let $(\mathfrak{g}, \mathfrak{T})$ be a $\lambda$-weighted Rota-Baxter Lie algebra. Then it is easy to see that $(\mathfrak{g}, - \lambda \mathrm{id}_\mathfrak{g} - \mathfrak{T})$ is also a $\lambda$-weighted Rota-Baxter Lie algebra. More precisely, we have the following.

\begin{prop}
Let $(\mathfrak{g}, \mathfrak{T})$ be a $\lambda$-weighted Rota-Baxter Lie algebra and $(\mathcal{V}, \mathcal{T})$ be a representation of it. Then $(\mathcal{V}, - \lambda \mathrm{id}_\mathcal{V} - \mathcal{T})$ is a representation of the $\lambda$-weighted Rota-Baxter Lie algebra $(\mathfrak{g}, - \lambda \mathrm{id}_\mathfrak{g} - \mathfrak{T})$.
\end{prop}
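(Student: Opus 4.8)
The plan is to reduce the claim to a single identity. The underlying Lie-algebra representation $(\mathcal{V}, \rho)$ of $\mathfrak{g}$ is not altered by passing from $\mathcal{T}$ to $-\lambda \mathrm{id}_\mathcal{V} - \mathcal{T}$, so the Chevalley–Eilenberg part of the data needs no checking. Moreover, $(\mathfrak{g}, -\lambda \mathrm{id}_\mathfrak{g} - \mathfrak{T})$ is already a $\lambda$-weighted Rota-Baxter Lie algebra (item (iii) of the earlier Example), so the target structure is well defined. Hence the entire content of the proposition is the compatibility condition \eqref{lam-rep} for the operator $\mathcal{T}' := -\lambda \mathrm{id}_\mathcal{V} - \mathcal{T}$ relative to $\mathfrak{T}' := -\lambda \mathrm{id}_\mathfrak{g} - \mathfrak{T}$, namely
\begin{align*}
\rho(\mathfrak{T}'x)(\mathcal{T}'u) = \mathcal{T}'\big( \rho(\mathfrak{T}'x)u + \rho(x)(\mathcal{T}'u) + \lambda \rho(x)u \big), \quad x \in \mathfrak{g},\ u \in \mathcal{V}.
\end{align*}

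First I would expand the left-hand side. Using that $\rho$ is linear, $\rho(\mathfrak{T}'x) = -\lambda \rho(x) - \rho(\mathfrak{T}x)$, and substituting $\mathcal{T}'u = -\lambda u - \mathcal{T}u$, the left-hand side becomes a sum of four terms
$\lambda^2 \rho(x)u + \lambda \rho(x)(\mathcal{T}u) + \lambda \rho(\mathfrak{T}x)u + \rho(\mathfrak{T}x)(\mathcal{T}u)$.
For the right-hand side I would first simplify the argument of $\mathcal{T}'$: expanding $\rho(\mathfrak{T}'x)u$ and $\rho(x)(\mathcal{T}'u)$ by linearity, two of the $\lambda \rho(x)u$ contributions cancel against the explicit $+\lambda \rho(x)u$, leaving the argument equal to $-\lambda \rho(x)u - \rho(\mathfrak{T}x)u - \rho(x)(\mathcal{T}u)$. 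Then I would apply $\mathcal{T}' = -\lambda \mathrm{id}_\mathcal{V} - \mathcal{T}$ and distribute $\mathcal{T}$ over this sum by linearity.

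The crux is that the three $\mathcal{T}$-valued terms produced this way are exactly
$\mathcal{T}\big( \rho(\mathfrak{T}x)u + \rho(x)(\mathcal{T}u) + \lambda \rho(x)u \big)$,
which by the hypothesis \eqref{lam-rep} for $(\mathcal{V}, \mathcal{T})$ collapses to the single term $\rho(\mathfrak{T}x)(\mathcal{T}u)$. The remaining $-\lambda \, \mathrm{id}_\mathcal{V}$ contributions give $\lambda^2 \rho(x)u + \lambda \rho(\mathfrak{T}x)u + \lambda \rho(x)(\mathcal{T}u)$, and adding the collapsed term reproduces the four-term expression obtained for the left-hand side. I expect no genuine obstacle here: the verification is a finite linear computation, and the only non-formal step is the single, correctly-oriented application of \eqref{lam-rep}. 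The one place to be careful is the sign and coefficient bookkeeping — in particular tracking the three $\lambda \rho(x)u$-type cross terms so that the argument of $\mathcal{T}'$ simplifies before the hypothesis is invoked — since a misplaced sign there would spoil the final matching.
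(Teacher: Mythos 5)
Your proposal is correct and is essentially the paper's own proof: both expand the two sides of the compatibility condition by linearity and invoke the hypothesis \eqref{lam-rep} exactly once to match the resulting four-term expressions. The only (cosmetic) difference is that you apply \eqref{lam-rep} to collapse the $\mathcal{T}$-terms on the right-hand side, while the paper applies it to expand $\rho(\mathfrak{T}x)(\mathcal{T}u)$ on the left-hand side.
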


\begin{proof}
For any $x \in \mathfrak{g}$ and $u \in \mathcal{V}$, we observe that
\begin{align}\label{minus-1}
&\rho ((- \lambda \mathrm{id}_\mathfrak{g} - \mathfrak{T})x ) (- \lambda \mathrm{id}_\mathcal{V} - \mathcal{T})(u) \nonumber \\
&= \lambda^2 \rho(x) u + \lambda \rho (\mathfrak{T}x) u + \lambda \rho (x) \mathcal{T}u + \rho (\mathfrak{T}x) \mathcal{T}u \nonumber \\
&= \lambda^2 \rho(x) u + \lambda \rho (\mathfrak{T}x) u + \lambda \rho (x) \mathcal{T}u + \mathcal{T} \big( \rho(\mathfrak{T}x) u + \rho (x) \mathcal{T}u + \lambda \rho (x) u   \big).
\end{align}
On the other hand,
\begin{align}\label{minus-2}
&(- \lambda \mathrm{id}_\mathcal{V} - \mathcal{T}) \bigg( \rho ( - \lambda \mathrm{id}_\mathfrak{g} - \mathfrak{T}(x) ) u + \rho (x) (- \lambda \mathrm{id}_\mathcal{V} - \mathcal{T})(u) + \lambda \rho (x) u    \bigg) \nonumber \\
&= \lambda^2 \rho (x) u + \lambda \mathcal{T} (\rho (x) u) + \lambda \rho (\mathfrak{T}x) u + \mathcal{T} (\rho (\mathfrak{T}x) u)
+ \cancel{\lambda^2 \rho (x) u} + \cancel{\lambda \mathcal{T} (\rho (x) u)} + \lambda \rho (x) \mathcal{T}u + \mathcal{T} (\rho (x) \mathcal{T}u) \\
&~~ - \cancel{\lambda^2 \rho (x) u} - \cancel{\lambda \mathcal{T} (\rho(x) u)}. \nonumber
\end{align}
The expressions in (\ref{minus-1}) and (\ref{minus-2}) are same. Hence $(\mathcal{V}, - \lambda \mathrm{id}_\mathcal{V} - \mathcal{T})$ is a representation of the $\lambda$-weighted Rota-Baxter Lie algebra $(\mathfrak{g}, - \lambda \mathrm{id}_\mathfrak{g} - \mathfrak{T})$.
\end{proof}

\begin{prop}
Let $(\mathfrak{g}, \mathfrak{T})$ be a $\lambda$-weighted Rota-Baxter Lie algebra and $\{ (\mathcal{V}_i, \mathcal{T}_i) \}_{i \in I}$ be a family of representations of it. Then $\big( \oplus_{i \in I} \mathcal{V}_i, ~ \oplus_{i \in I} \mathcal{T}_i   \big)$ is also a representation of the $\lambda$-weighted Rota-Baxter Lie algebra $(\mathfrak{g}, \mathfrak{T})$.
\end{prop}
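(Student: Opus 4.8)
The plan is to reduce the verification to the component-wise compatibility axiom that each summand already satisfies. First I would recall the standard fact that the algebraic direct sum $\mathcal{V} := \oplus_{i \in I} \mathcal{V}_i$ becomes a representation of the Lie algebra $\mathfrak{g}$ under the diagonal action $\rho := \oplus_{i \in I} \rho_i$, given by $\rho(x)\big( (u_i)_{i \in I}\big) = (\rho_i(x) u_i)_{i \in I}$; the Lie algebra representation axiom for $\rho$ follows immediately from the corresponding axiom for each $\rho_i$. I would then introduce the linear map $\mathcal{T} := \oplus_{i \in I} \mathcal{T}_i$ acting component-wise by $\mathcal{T}\big( (u_i)_{i \in I}\big) = (\mathcal{T}_i u_i)_{i \in I}$, and claim that the pair $(\mathcal{V}, \mathcal{T})$ satisfies the representation axiom (\ref{lam-rep}).

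The heart of the argument is precisely this last claim. Because both $\rho(x)$ and $\mathcal{T}$ preserve each summand and act independently on it, for a fixed $x \in \mathfrak{g}$ and $u = (u_i)_{i \in I} \in \mathcal{V}$ every term occurring on either side of (\ref{lam-rep}) is itself a direct sum over $i \in I$. Projecting onto the $i$-th component, the identity to be checked collapses to $\rho_i(\mathfrak{T}x)(\mathcal{T}_i u_i) = \mathcal{T}_i\big( \rho_i(\mathfrak{T}x) u_i + \rho_i(x)(\mathcal{T}_i u_i) + \lambda \rho_i(x) u_i\big)$, which is exactly the instance of (\ref{lam-rep}) for the representation $(\mathcal{V}_i, \mathcal{T}_i)$ and therefore holds by hypothesis. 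Since this holds for every $i \in I$, the full identity holds on $\mathcal{V}$, and hence $(\mathcal{V}, \mathcal{T})$ is a representation of $(\mathfrak{g}, \mathfrak{T})$.

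I do not expect any genuine obstacle: the single axiom (\ref{lam-rep}) decouples completely across the index set because both structure maps are defined coordinatewise, so the whole content is the observation that a coordinatewise identity holding in each factor holds in the direct sum. The only mild point worth stating explicitly is that $\oplus_{i \in I} \mathcal{V}_i$ should be read as the algebraic direct sum of families with finite support, so that the coordinatewise maps $\rho(x)$ and $\mathcal{T}$ are well defined and send finitely supported families to finitely supported families; with this convention the argument goes through verbatim for an arbitrary, possibly infinite, index set $I$.
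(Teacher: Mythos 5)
Your proof is correct and follows essentially the same route as the paper: both define the diagonal action $\rho = \oplus_{i \in I}\rho_i$ and the component-wise map $\oplus_{i \in I}\mathcal{T}_i$, and then verify the compatibility identity (\ref{lam-rep}) coordinate by coordinate, reducing it to the hypothesis for each $(\mathcal{V}_i, \mathcal{T}_i)$. Your explicit remark that $\oplus_{i \in I}\mathcal{V}_i$ is the algebraic direct sum (finitely supported families) is a harmless clarification the paper leaves implicit.
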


\begin{proof}
Let $\rho_i : \mathfrak{g} \rightarrow \mathrm{End}(\mathcal{V}_i)$ denote the action of the Lie algebra $\mathfrak{g}$ on the representation $\mathcal{V}_i$. Then it follows that $\rho : \mathfrak{g} \rightarrow \mathrm{End} (\oplus_{i \in I} \mathcal{V}_i)$, $\rho (x) (u_i)_{i \in I} = (\rho_i (x) u_i)_{i \in I}$ is a representation of $\mathfrak{g}$ on $\oplus_{i \in I} \mathcal{V}_i$. Moreover, for any $x \in \mathfrak{g}$ and $(u_i)_{i \in I} \in \oplus_{i \in I} \mathcal{V}_i$, we have
\begin{align*}
\rho (\mathfrak{T}x)     (\oplus_{i \in I} \mathcal{T}_i) (u_i)_{i \in I} =~& \big(   \rho_i (\mathfrak{T}x) \mathcal{T}_i (u_i) \big)_{i \in I} \\
=~& \big(   \mathcal{T}_i \big( \rho_i (\mathfrak{T}x) u_i + \rho_i (x) \mathcal{T}_i (u_i) + \lambda \rho_i(x) u_i    \big)     \big)_{i \in I} \\
=~&  (\oplus_{i \in I} \mathcal{T}_i)  \bigg(  \rho (\mathfrak{T}x)(u_i)_{i \in I} + \rho (x)  (\oplus_{i \in I} \mathcal{T}_i)  (u_i)_{i \in I} + \lambda \rho (x)(u_i)_{i \in I}   \bigg).
\end{align*} 
Hence the result follows.
\end{proof}

%\textcolor{red}{tensor product and wedge product of representations}

Let $\mathfrak{g}$ be a Lie algebra and $(\mathcal{V}, \rho)$ be a representation. Then there is a representation of the Lie algebra $\mathfrak{g}$ on the space $\mathrm{End}(\mathcal{V})$ with the action given by
\begin{align*}
\widetilde{\rho} : \mathfrak{g} \rightarrow \mathrm{End} \big(  \mathrm{End}(\mathcal{V}) \big), ~ (\widetilde{\rho} (x) f) u = - f (\rho (x) u), ~ \text{ for } x \in \mathfrak{g},~ f \in \mathrm{End}(\mathcal{V}) \text{ and } u \in \mathcal{V}.
\end{align*}
With this representation, we have the following.

\begin{prop}
Let $(\mathfrak{g}, \mathfrak{T})$ be a $\lambda$-weighted Rota-Baxter Lie algebra and $(\mathcal{V}, \mathcal{T})$ be a representation of it. Then $(\mathrm{End}(\mathcal{V}), \widetilde{\mathcal{T}})$ is also a representation of the $\lambda$-weighted Rota-Baxter Lie algebra $(\mathfrak{g}, \mathfrak{T})$, where
\begin{align*}
\widetilde{\mathcal{T}} : \mathrm{End}(\mathcal{V}) \rightarrow \mathrm{End}(\mathcal{V}), ~ \widetilde{\mathcal{T}} (f) (u) = - \lambda f (u) - f (\mathcal{T}(u)), ~\text{ for } u \in V.
\end{align*}
\end{prop}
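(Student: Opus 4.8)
The plan is to verify directly that the pair $(\mathrm{End}(\mathcal{V}), \widetilde{\mathcal{T}})$ satisfies the defining condition \eqref{lam-rep} of a representation. Since the discussion preceding the statement already records that $\widetilde{\rho}$ turns $\mathrm{End}(\mathcal{V})$ into a representation of the Lie algebra $\mathfrak{g}$, the only point requiring proof is the Rota-Baxter compatibility between $\widetilde{\rho}$ and $\widetilde{\mathcal{T}}$, namely $\widetilde{\rho}(\mathfrak{T}x)(\widetilde{\mathcal{T}}f) = \widetilde{\mathcal{T}}\big( \widetilde{\rho}(\mathfrak{T}x)f + \widetilde{\rho}(x)(\widetilde{\mathcal{T}}f) + \lambda \widetilde{\rho}(x)f \big)$ for all $x \in \mathfrak{g}$ and $f \in \mathrm{End}(\mathcal{V})$. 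As this is an identity between endomorphisms of $\mathcal{V}$, I would prove it by evaluating both sides on an arbitrary $u \in \mathcal{V}$ and simplifying with the defining formulas $(\widetilde{\rho}(x)f)(u) = -f(\rho(x)u)$ and $\widetilde{\mathcal{T}}(f)(u) = -\lambda f(u) - f(\mathcal{T}u)$.

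First I would expand the left-hand side, obtaining $\lambda f(\rho(\mathfrak{T}x)u) + f(\mathcal{T}\rho(\mathfrak{T}x)u)$. Then I would expand the three summands on the right-hand side separately; the two contributions proportional to $\lambda f(\rho(x)u)$ cancel, so the argument of $\widetilde{\mathcal{T}}$ simplifies to the endomorphism $g$ with $g(u) = -f(\rho(\mathfrak{T}x)u) + f(\mathcal{T}\rho(x)u)$. Applying $\widetilde{\mathcal{T}}$ to $g$ and evaluating at $u$ then yields a four-term expression, and after cancelling the common term $\lambda f(\rho(\mathfrak{T}x)u)$ the required equality reduces to
\[
f\big(\mathcal{T}\rho(\mathfrak{T}x)u\big) = f\big(\rho(\mathfrak{T}x)\mathcal{T}u\big) - f\big(\mathcal{T}\rho(x)\mathcal{T}u\big) - \lambda\, f\big(\mathcal{T}\rho(x)u\big).
\]

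This last identity is exactly $f$ applied to the defining representation condition \eqref{lam-rep} for $(\mathcal{V}, \mathcal{T})$, rewritten as $\mathcal{T}\rho(\mathfrak{T}x)u = \rho(\mathfrak{T}x)\mathcal{T}u - \mathcal{T}\rho(x)\mathcal{T}u - \lambda \mathcal{T}\rho(x)u$; since $f$ is linear, applying it to both sides finishes the verification. The computation involves no conceptual difficulty, and the only real care needed is the bookkeeping of the signs introduced by the minus signs in $\widetilde{\rho}$ and $\widetilde{\mathcal{T}}$. The crux is recognizing that, once the $\lambda f(\rho(x)u)$ terms cancel, what remains is precisely the image under $f$ of the original compatibility identity \eqref{lam-rep}, so that no input beyond that identity and the linearity of $f$ is required.
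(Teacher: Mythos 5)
Your proposal is correct and follows essentially the same route as the paper: both directly verify the compatibility identity (\ref{lam-rep}) for the pair $(\widetilde{\rho},\widetilde{\mathcal{T}})$ by evaluating both sides on an arbitrary $u \in \mathcal{V}$, taking the Lie-algebra representation structure on $\mathrm{End}(\mathcal{V})$ as given, and invoking the compatibility of $(\mathcal{V},\mathcal{T})$. Your only deviation --- collapsing the argument of $\widetilde{\mathcal{T}}$ to the two-term endomorphism $g$ before applying $\widetilde{\mathcal{T}}$, rather than expanding first and cancelling afterward as the paper does --- is just a cleaner bookkeeping of the identical computation.
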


\begin{proof}
For any $x, y \in \mathfrak{g}$, $f \in \mathrm{End}(\mathcal{V})$ and $u \in \mathcal{V}$,
\begin{align}\label{end-1}
\bigg( \widetilde{\rho} (\mathcal{T} x) \widetilde{T}(f)    \bigg) u 
= - \widetilde{\mathcal{T}} (f) \big(   \rho (\mathfrak{T}x) u \big) 
= \lambda ~f  (\rho (\mathfrak{T}x) u) + f \big(  \mathcal{T} (\rho (\mathfrak{T}x)u) \big).
\end{align}
On the other hand,
\begin{align}\label{end-2}
&\bigg( \widetilde{\mathcal{T}} \big(  \widetilde{\rho}(\mathfrak{T} x) f + \widetilde{\rho}(x) \widetilde{\mathcal{T}} (f) + \lambda \widetilde{\rho}(x) f   \big)   \bigg)u \nonumber \\
&= - \lambda \big(  \widetilde{\rho}(\mathfrak{T} x) f + \widetilde{\rho}(x) \widetilde{\mathcal{T}} (f) + \lambda \widetilde{\rho}(x) f \big)  u ~-~ \big(  \widetilde{\rho}(\mathfrak{T} x) f + \widetilde{\rho}(x) \widetilde{\mathcal{T}} (f) + \lambda \widetilde{\rho}(x) f \big) (\mathcal{T} u)  \nonumber \\
&= \lambda \bigg(  f (\rho (\mathfrak{T}x)u) + \widetilde{\mathcal{T}} (f) (\rho (x) u) + \lambda f (\rho(x) u)  \bigg) + \bigg(  f \big( \rho (\mathfrak{T} x) (\mathcal{T} u)   \big)  + \widetilde{\mathcal{T}} (f) \big(  \rho(x) (\mathcal{T} u) \big) + \lambda f \big( \rho (x) (\mathcal{T}u) \big)  \bigg)  \nonumber \\
&= \lambda \bigg(   f (\rho (\mathfrak{T}x)u) - \cancel{\lambda f (\rho(x) u)} - \cancel{f \big( \mathcal{T}(\rho(x)u)  \big)} + \cancel{\lambda f (\rho(x)u)}   \bigg)  \nonumber \\
&~~+ f \bigg(  \mathcal{T} \big( \rho (\mathfrak{T}x) u \big) + \cancel{ \mathcal{T} \big(  \rho(x) (\mathcal{T}u)  \big)} + \cancel{ \mathcal{T} (\rho(x) u)} - \cancel{\lambda \rho(x) (\mathcal{T}u)} - \cancel{\mathcal{T} \big(\rho(x) (\mathcal{T}u) \big)}  + \cancel{\lambda \rho(x)(\mathcal{T} u)}   \bigg)  \nonumber  \\
&= \lambda ~f  (\rho (\mathfrak{T}x) u) + f \big(  \mathcal{T} (\rho (\mathfrak{T}x)u) \big).
\end{align}
It follows from (\ref{end-1}) and (\ref{end-2}) that $(\mathrm{End}(\mathcal{V}), \widetilde{\mathcal{T}})$ is a representation of $(\mathfrak{g}, \mathfrak{T})$.
\end{proof}

In the following, we construct the semidirect product in the context of $\lambda$-weighted Rota-Baxter Lie algebras.

\begin{prop}
Let $(\mathfrak{g}, \mathfrak{T})$ be a $\lambda$-weighted Rota-Baxter Lie algebra and $(\mathcal{V}, \mathcal{T})$ be a representation of it. Then $(\mathfrak{g} \oplus \mathcal{V}, \mathfrak{T} \oplus \mathcal{T})$ is a $\lambda$-weighted Rota-Baxter Lie algebra, where the Lie bracket on $\mathfrak{g} \oplus \mathcal{V}$ is given by the semidirect product
\begin{align}\label{semi-lie}
[(x,u), (y, v)]_\ltimes := ([x,y], \rho (x) v - \rho (y) u ),~ \text{ for } x, y \in \mathfrak{g}, u, v \in V.
\end{align}
\end{prop}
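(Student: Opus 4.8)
The plan is to verify the claim in two stages: first, that the semidirect product bracket (\ref{semi-lie}) genuinely defines a Lie algebra structure on $\mathfrak{g} \oplus \mathcal{V}$, and second, that the diagonal operator $\mathfrak{T} \oplus \mathcal{T}$ satisfies the $\lambda$-weighted Rota-Baxter identity (\ref{lam-rota-id}) with respect to this bracket.

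For the first stage, antisymmetry of $[\cdot, \cdot]_\ltimes$ is immediate from the antisymmetry of the bracket of $\mathfrak{g}$ together with the sign asymmetry built into the term $\rho(x)v - \rho(y)u$. The Jacobi identity splits into its $\mathfrak{g}$-component, which reduces to the Jacobi identity of $\mathfrak{g}$, and its $\mathcal{V}$-component, which reduces precisely to the representation condition $\rho[x,y] = \rho(x)\rho(y) - \rho(y)\rho(x)$. This is the standard semidirect product construction, so I would record it briefly and move on.

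For the second stage, write $\mathcal{R} := \mathfrak{T} \oplus \mathcal{T}$, so that $\mathcal{R}(x,u) = (\mathfrak{T}x, \mathcal{T}u)$, and evaluate both sides of (\ref{lam-rota-id}) on generic elements $(x,u), (y,v) \in \mathfrak{g} \oplus \mathcal{V}$. Because every bracket in (\ref{semi-lie}) is computed componentwise and $\mathcal{R}$ acts componentwise, the identity decouples into a $\mathfrak{g}$-component and a $\mathcal{V}$-component that can be checked separately. The $\mathfrak{g}$-component of the left-hand side is $[\mathfrak{T}x, \mathfrak{T}y]$ and that of the right-hand side is $\mathfrak{T}\big([\mathfrak{T}x,y] + [x,\mathfrak{T}y] + \lambda[x,y]\big)$; these agree exactly by the Rota-Baxter identity (\ref{lam-rota-id}) for $\mathfrak{T}$ on $\mathfrak{g}$.

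The only genuine computation is the $\mathcal{V}$-component, and this is where I expect the one opportunity to slip. On the left-hand side it equals $\rho(\mathfrak{T}x)\mathcal{T}v - \rho(\mathfrak{T}y)\mathcal{T}u$; applying the representation condition (\ref{lam-rep}) separately to each of the two terms $\rho(\mathfrak{T}x)\mathcal{T}v$ and $\rho(\mathfrak{T}y)\mathcal{T}u$ rewrites this as $\mathcal{T}$ applied to a six-term expression. On the right-hand side, collecting the $\mathcal{V}$-components of $[\mathcal{R}(x,u),(y,v)]_\ltimes$, $[(x,u),\mathcal{R}(y,v)]_\ltimes$, and $\lambda[(x,u),(y,v)]_\ltimes$ before applying $\mathcal{T}$ yields $\mathcal{T}$ of the same six terms, merely reordered. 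Matching the two therefore requires nothing beyond careful bookkeeping; the main care needed is keeping the signs attached to the $u$-terms straight when invoking (\ref{lam-rep}) twice, once with the roles of the two arguments exchanged.
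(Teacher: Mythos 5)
Your proposal is correct and follows essentially the same route as the paper: the paper's proof is a single componentwise computation in which the $\mathfrak{g}$-component is handled by the Rota-Baxter identity (\ref{lam-rota-id}) for $\mathfrak{T}$ and the $\mathcal{V}$-component by applying the representation condition (\ref{lam-rep}) to each of $\rho(\mathfrak{T}x)(\mathcal{T}v)$ and $\rho(\mathfrak{T}y)(\mathcal{T}u)$, exactly as you describe. The only difference is that you spell out the verification that $[\cdot,\cdot]_\ltimes$ is a Lie bracket, which the paper treats as the standard semidirect product fact.
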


\begin{proof}
We have
\begin{align*}
&[ (\mathfrak{T} \oplus \mathcal{T}) (x,u), (\mathfrak{T} \oplus \mathcal{T})(y, v) ]_\ltimes \\
&= \big( [\mathfrak{T}(x), \mathfrak{T}(y)],~ \rho (\mathfrak{T}x) (\mathcal{T}v) - \rho (\mathfrak{T}y)(\mathcal{T}u) \big) \\
&= \big(  \mathfrak{T} [\mathfrak{T}(x), y],~ \mathcal{T}(  \rho (\mathfrak{T}x) v - \rho (\mathfrak{T}y) u) \big)   + \big( \mathfrak{T}[x, \mathfrak{T}(y)],~ \mathcal{T} (\rho (x) (\mathcal{T}v) - \rho(y)(\mathcal{T}u))   \big) \\
&~~+ \lambda~ \big( \mathfrak{T}[x,y],~ \mathcal{T} (\rho (x) v - \rho (y) u)   \big) \\
&= (\mathfrak{T} \oplus \mathcal{T}) \bigg( [(\mathfrak{T} \oplus \mathcal{T})(x,u), (y,v)]_\ltimes + [(x,u), (\mathfrak{T} \oplus \mathcal{T})(y,v)]_\ltimes + \lambda [(x,u), (y,v)]_\ltimes    \bigg).
\end{align*}
This shows that $\mathfrak{T} \oplus \mathcal{T}$ is a $\lambda$-weighted Rota-Baxter operator on the semidirect product Lie algebra. Hence the result follows.
\end{proof}

\begin{remark}
The converse of the above proposition is also true. More precisely, let $(\mathfrak{g}, \mathfrak{T})$ be a $\lambda$-weighted Rota-Baxter Lie algebra. Let $\mathcal{V}$ be a vector space and $\rho : \mathfrak{g} \rightarrow \mathrm{End}(\mathcal{V})$, $\mathcal{T} : \mathcal{V} \rightarrow \mathcal{V}$ be two linear maps. Then $(\mathcal{V}= (\mathcal{V}, \rho), \mathcal{T})$ is a representation of the $\lambda$-weighted Rota-Baxter Lie algebra $(\mathfrak{g}, \mathfrak{T})$ if and only if $(\mathfrak{g} \oplus \mathcal{V}, \mathfrak{T} \oplus \mathcal{T})$ is a $\lambda$-weighted Rota-Baxter Lie algebra, where $\mathfrak{g} \oplus \mathcal{V}$ is equipped with the bracket (\ref{semi-lie}).
\end{remark}

\begin{remark}
Let $\mathfrak{g}$ be a Lie algebra and $\mathcal{V}$ be a representation of it. Let $\mathfrak{T}: \mathfrak{g} \rightarrow \mathfrak{g}$ be a $\lambda$-weighted Rota-Baxter operator on $\mathfrak{g}$ which makes $(\mathfrak{g}, \mathfrak{T})$ a  $\lambda$-weighted Rota-Baxter Lie algebra. Intuitively, it follows from the above proposition that $\mathcal{T}: \mathcal{V} \rightarrow \mathcal{V}$ can be considered as a representation of $\mathfrak{T}$ with respect to the $\mathfrak{g}$-representation $\mathcal{V}$. In Section \ref{sec-rbp}, we call the tuple $(\mathfrak{T}, \mathcal{T})$ as $\lambda$-weighted Rota-Baxter paired operators. 
\end{remark}

%\textcolor{red}{Tensor product, Wedge product of representations}

\begin{prop}\label{defor-lie}
Let $(\mathfrak{g}, \mathfrak{T})$ be a $\lambda$-weighted Rota-Baxter Lie algebra. Then we have the followings.
\begin{itemize}
\item[(i)] The pair $(\mathfrak{g}, [~,~]_\mathfrak{T})$ is a Lie algebra, where
\begin{align*}
[x,y]_\mathfrak{T} := [\mathfrak{T}(x), y] + [x, \mathfrak{T}(y)] + \lambda [x, y], ~ \text{ for } x, y \in \mathfrak{g}.
\end{align*}
We denote this Lie algebra by $\mathfrak{g}_\mathfrak{T}$.
\item[(ii)] The pair $( \mathfrak{g}_\mathfrak{T}, \mathfrak{T} )$ is a $\lambda$-weighted Rota-Baxter Lie algebra and the map $\mathfrak{T} : \mathfrak{g}_\mathfrak{T} \rightarrow \mathfrak{g}$ is a morphism of $\lambda$-weighted Rota-Baxter Lie algebras.
\end{itemize}
\end{prop}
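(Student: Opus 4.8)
The plan is to base the entire argument on a single reformulation of the defining identity (\ref{lam-rota-id}). Reading (\ref{lam-rota-id}) from right to left, and recognizing the argument of $\mathfrak{T}$ on the right-hand side as the deformed bracket, we obtain the intertwining relation
\begin{align*}
\mathfrak{T}\big([x,y]_\mathfrak{T}\big) = [\mathfrak{T}x, \mathfrak{T}y], \qquad \text{for } x, y \in \mathfrak{g},
\end{align*}
which says that $\mathfrak{T}$ carries the bracket $[~,~]_\mathfrak{T}$ to the original bracket $[~,~]$. This relation is the engine of the whole proof, so I would establish it first, directly from (\ref{lam-rota-id}).

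For part (i), bilinearity of $[~,~]_\mathfrak{T}$ is immediate from the definition, and antisymmetry follows from the antisymmetry of $[~,~]$ applied termwise after interchanging $x$ and $y$. The real content is the Jacobi identity. I would expand $[[x,y]_\mathfrak{T}, z]_\mathfrak{T}$ by first invoking the intertwining relation to replace $\mathfrak{T}([x,y]_\mathfrak{T})$ by $[\mathfrak{T}x, \mathfrak{T}y]$, then distributing fully; this produces seven terms, naturally graded by the number of copies of $\mathfrak{T}$ they carry — three terms with two $\mathfrak{T}$'s (such as $[[\mathfrak{T}x, \mathfrak{T}y], z]$), three terms with one $\mathfrak{T}$ and a factor $\lambda$ (such as $\lambda[[\mathfrak{T}x, y], z]$), and one term $\lambda^2 [[x,y],z]$. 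I would then sum over the three cyclic permutations of $(x,y,z)$ and regroup within each graded packet according to which argument is left untouched by $\mathfrak{T}$. Each packet reassembles into Jacobiators of the original bracket $[~,~]$ and hence vanishes. This cyclic bookkeeping — tracking the precise argument each $\mathfrak{T}$ sits on so that permuted summands realign into genuine Jacobi expressions — is the one place where care is needed, and I expect it to be the main obstacle.

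Part (ii) is then essentially formal. To check that $(\mathfrak{g}_\mathfrak{T}, \mathfrak{T})$ is itself a $\lambda$-weighted Rota-Baxter Lie algebra, I would verify (\ref{lam-rota-id}) for the bracket $[~,~]_\mathfrak{T}$: applying $\mathfrak{T}$ to $[\mathfrak{T}x, y]_\mathfrak{T} + [x, \mathfrak{T}y]_\mathfrak{T} + \lambda [x,y]_\mathfrak{T}$ and using the intertwining relation on each summand yields $[\mathfrak{T}^2 x, \mathfrak{T}y] + [\mathfrak{T}x, \mathfrak{T}^2 y] + \lambda [\mathfrak{T}x, \mathfrak{T}y]$, which is exactly $[\mathfrak{T}x, \mathfrak{T}y]_\mathfrak{T}$ by the definition of the deformed bracket. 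Finally, that $\mathfrak{T} : \mathfrak{g}_\mathfrak{T} \to \mathfrak{g}$ is a morphism reduces to two observations: it is a Lie algebra homomorphism precisely by the intertwining relation, and it trivially commutes with the Rota-Baxter operators, since the operator on both source and target is $\mathfrak{T}$ itself. No computation beyond the intertwining relation is required here.
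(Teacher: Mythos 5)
Your proposal is correct, and its backbone --- the intertwining relation $\mathfrak{T}\big([x,y]_\mathfrak{T}\big) = [\mathfrak{T}x,\mathfrak{T}y]$, read off directly from (\ref{lam-rota-id}) --- is exactly the engine of the paper's proof of part (ii): the paper's computation
\begin{align*}
[\mathfrak{T}(x), \mathfrak{T}(y)]_\mathfrak{T} = [\mathfrak{T}^2(x), \mathfrak{T}(y)] + [\mathfrak{T}(x), \mathfrak{T}^2(y)] + \lambda [\mathfrak{T}(x), \mathfrak{T}(y)] = \mathfrak{T} \big( [\mathfrak{T}(x), y]_\mathfrak{T} + [x, \mathfrak{T}(y)]_\mathfrak{T} + \lambda [x,y]_\mathfrak{T} \big)
\end{align*}
is the same calculation you describe, written in the opposite order, and the morphism claim is handled identically in both. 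Where you genuinely diverge is part (i): the paper does not prove the Jacobi identity for $[~,~]_\mathfrak{T}$ at all, but cites \cite[Proposition 5.5]{das-weighted} as a standard result, whereas you give a self-contained proof. Your bookkeeping there is sound: expanding $[[x,y]_\mathfrak{T}, z]_\mathfrak{T}$ via the intertwining relation indeed yields seven terms graded by the number of $\mathfrak{T}$'s (three with two $\mathfrak{T}$'s, three with one $\mathfrak{T}$ and a factor $\lambda$, one with $\lambda^2$), and after cyclic summation each packet regroups into three Jacobiators of the original bracket --- in the two-$\mathfrak{T}$ packet grouped by the argument not hit by $\mathfrak{T}$, in the one-$\mathfrak{T}$ packet grouped by the argument carrying $\mathfrak{T}$ --- all of which vanish. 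The trade-off is the expected one: the paper's route is shorter but leans on an external reference, while yours makes the proposition fully self-contained at the cost of the cyclic regrouping argument, which you have carried out correctly.
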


\begin{proof}
(i) This is a standard result that $\mathfrak{g}_\mathfrak{T} = (\mathfrak{g}, [~,~]_\mathfrak{T})$ is a Lie algebra. See for instance  \cite[Proposition 5.5]{das-weighted}. We also observe that
\begin{align*}
[\mathfrak{T}(x), \mathfrak{T}(y)]_\mathfrak{T} =~& [\mathfrak{T}^2(x), \mathfrak{T}(y)] + [\mathfrak{T}(x), \mathfrak{T}^2(y)] + \lambda [\mathfrak{T}(x), \mathfrak{T}(y)] \\
=~& \mathfrak{T} \big(  [\mathfrak{T}(x), y]_\mathfrak{T} + [x, \mathfrak{T}(y)]_\mathfrak{T} + \lambda [x,y]_\mathfrak{T}  \big)
\end{align*}
which shows that $\mathfrak{T}$ is a $\lambda$-weighted Rota-Baxter operator on the Lie algebra $\mathfrak{g}_\mathfrak{T}$.

(ii) Since $\mathfrak{T}$ is a $\lambda$-weighted Rota-Baxter operator on $\mathfrak{g}$, it follows from (\ref{lam-rota-id}) that
\begin{align*}
\mathfrak{T}([x,y]_\mathfrak{T}) = [\mathfrak{T}(x), \mathfrak{T}(y)], ~\text{ for } x, y \in \mathfrak{g}_\mathfrak{T}.
\end{align*}
This imples that $\mathfrak{T} : \mathfrak{g}_\mathfrak{T} \rightarrow \mathfrak{g}$ is a morphism of $\lambda$-weighted Rota-Baxter Lie algebras from $(\mathfrak{g}_\mathfrak{T}, \mathfrak{T})$ to $(\mathfrak{g}, \mathfrak{T})$.
\end{proof}

\begin{thm}
Let $(\mathfrak{g}, \mathfrak{T})$ be a $\lambda$-weighted Rota-Baxter Lie algebra and $(\mathcal{V}, \mathcal{T})$ be a representation of it. Define a map $\overline{\rho} : \mathfrak{g} \rightarrow \mathrm{End}(\mathcal{V})$ by
\begin{align*}
\overline{\rho}(x) u := \rho (\mathfrak{T}x) u + \rho (x) (\mathcal{T}u) + \lambda \rho (x) u, ~ \text{ for } x \in \mathfrak{g}, u \in \mathcal{V}.
\end{align*}
Then
\begin{itemize}
\item[(i)] $\overline{\rho}$ satisfies $\mathcal{T} ( \overline{\rho}(x) u) = \rho (\mathfrak{T}(x)) \mathcal{T}(u),$
\item[(ii)] $(\overline{\mathcal{V}} = (\mathcal{V}, \overline{\rho}), \mathcal{T})$ is a representation of the $\lambda$-weighted Rota-Baxter Lie algebra $(\mathfrak{g}_\mathfrak{T}, \mathfrak{T})$.
\end{itemize}
\end{thm}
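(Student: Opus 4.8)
The plan is to dispose of part (i) by a one-line rewriting and then to prove part (ii) by recognizing the claimed representation as the output of two constructions already established in this section, thereby avoiding a direct verification of the defining axioms.

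For part (i), observe that the expression defining $\overline{\rho}(x)u$ is precisely the argument of $\mathcal{T}$ on the right-hand side of the representation identity (\ref{lam-rep}). Thus (\ref{lam-rep}) reads exactly $\rho(\mathfrak{T}x)(\mathcal{T}u) = \mathcal{T}(\overline{\rho}(x)u)$, which is the assertion of (i); nothing is needed beyond unpacking the definition of $\overline{\rho}$.

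For part (ii), the plan is to stack the semidirect-product construction with Proposition \ref{defor-lie}. First, by the preceding proposition on semidirect products, $(\mathfrak{g} \oplus \mathcal{V}, \mathfrak{T} \oplus \mathcal{T})$ is a $\lambda$-weighted Rota-Baxter Lie algebra, where $\mathfrak{g} \oplus \mathcal{V}$ carries the bracket (\ref{semi-lie}). Applying Proposition \ref{defor-lie} to this $\lambda$-weighted Rota-Baxter Lie algebra then shows that the descendent Lie algebra $(\mathfrak{g} \oplus \mathcal{V})_{\mathfrak{T} \oplus \mathcal{T}}$, together with the operator $\mathfrak{T} \oplus \mathcal{T}$, is again a $\lambda$-weighted Rota-Baxter Lie algebra. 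The key step, which I expect to be the main obstacle, is to compute the descendent bracket $[~,~]_{\mathfrak{T} \oplus \mathcal{T}}$ explicitly: expanding $[(\mathfrak{T} \oplus \mathcal{T})(x,u),(y,v)]_\ltimes + [(x,u),(\mathfrak{T} \oplus \mathcal{T})(y,v)]_\ltimes + \lambda[(x,u),(y,v)]_\ltimes$ via (\ref{semi-lie}), the $\mathfrak{g}$-component collapses to $[x,y]_\mathfrak{T}$, while the $\mathcal{V}$-component, after grouping the terms acting on $v$ and those acting on $u$, collapses to $\overline{\rho}(x)v - \overline{\rho}(y)u$. In other words, $(\mathfrak{g} \oplus \mathcal{V})_{\mathfrak{T} \oplus \mathcal{T}}$ is precisely the semidirect-product Lie algebra built from $\mathfrak{g}_\mathfrak{T}$ and the map $\overline{\rho}$ through (\ref{semi-lie}). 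This bracket identification is the crux of the argument: it is the computation that forces the exact form of $\overline{\rho}$, and it is the only place where (\ref{lam-rota-id}) and (\ref{lam-rep}) are genuinely used, namely through the two propositions invoked above.

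Finally, since $(\mathfrak{g}_\mathfrak{T}, \mathfrak{T})$ is a $\lambda$-weighted Rota-Baxter Lie algebra by Proposition \ref{defor-lie}(ii), and since $(\mathfrak{g}_\mathfrak{T} \oplus \mathcal{V}, \mathfrak{T} \oplus \mathcal{T})$ equipped with the bracket (\ref{semi-lie}) for $\overline{\rho}$ has just been identified as a $\lambda$-weighted Rota-Baxter Lie algebra, the converse recorded in the Remark following the semidirect-product proposition applies and yields that $(\overline{\mathcal{V}} = (\mathcal{V}, \overline{\rho}), \mathcal{T})$ is a representation of $(\mathfrak{g}_\mathfrak{T}, \mathfrak{T})$, as claimed. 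As an alternative, one could verify the two axioms by hand: that $\overline{\rho}$ is a representation of $\mathfrak{g}_\mathfrak{T}$ amounts to checking $\overline{\rho}([x,y]_\mathfrak{T}) = \overline{\rho}(x)\overline{\rho}(y) - \overline{\rho}(y)\overline{\rho}(x)$, and the weighted Rota-Baxter compatibility for $\mathcal{T}$ then follows readily from part (i); but the route through the semidirect product is cleaner and reuses the work already done.
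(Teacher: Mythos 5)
Your proof is correct, and for part (ii) it takes a genuinely different route from the paper's. Part (i) is handled identically in both arguments: it is just a restatement of (\ref{lam-rep}). For part (ii), the paper proceeds by direct verification: it expands $\overline{\rho}(x)\overline{\rho}(y)u - \overline{\rho}(y)\overline{\rho}(x)u$ term by term, uses the Rota-Baxter identity (\ref{lam-rota-id}) and the representation identity (\ref{lam-rep}) to collapse the result to $\overline{\rho}([x,y]_\mathfrak{T})u$, and then checks the compatibility $\overline{\rho}(\mathfrak{T}x)\mathcal{T}(u) = \mathcal{T}\big(\overline{\rho}(\mathfrak{T}x)u + \overline{\rho}(x)\mathcal{T}(u) + \lambda\overline{\rho}(x)u\big)$ with the help of part (i). You instead stack three established facts: the semidirect product proposition gives the $\lambda$-weighted Rota-Baxter Lie algebra $(\mathfrak{g}\oplus\mathcal{V}, \mathfrak{T}\oplus\mathcal{T})$; Proposition \ref{defor-lie} applied to it yields the descendent Rota-Baxter Lie algebra $\big((\mathfrak{g}\oplus\mathcal{V})_{\mathfrak{T}\oplus\mathcal{T}}, \mathfrak{T}\oplus\mathcal{T}\big)$; and your bracket identification --- which checks out, since the $\mathfrak{g}$-component of the descendent bracket is $[\mathfrak{T}x,y]+[x,\mathfrak{T}y]+\lambda[x,y] = [x,y]_\mathfrak{T}$ and the $\mathcal{V}$-component is $\big(\rho(\mathfrak{T}x)v + \rho(x)\mathcal{T}v + \lambda\rho(x)v\big) - \big(\rho(\mathfrak{T}y)u + \rho(y)\mathcal{T}u + \lambda\rho(y)u\big) = \overline{\rho}(x)v - \overline{\rho}(y)u$ --- exhibits this descendent algebra as exactly the semidirect-type algebra (\ref{semi-lie}) built from $\mathfrak{g}_\mathfrak{T}$ and $\overline{\rho}$, whence the converse recorded in the remark after the semidirect product proposition gives the claim. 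What your approach buys: it explains conceptually why $\overline{\rho}$ has the form it does, and it replaces the longest computation in the paper's proof by a three-line expansion. What it costs: it leans on the converse direction of that remark, which the paper asserts without proof; to be fully self-contained you should add the (short) justification, namely that the Jacobi identity for (\ref{semi-lie}) on elements $(x,0), (y,0), (0,v)$ forces $\overline{\rho}$ to be a Lie algebra representation of $\mathfrak{g}_\mathfrak{T}$, and the Rota-Baxter identity for $\mathfrak{T}\oplus\mathcal{T}$ evaluated on pairs $(x,0)$, $(0,v)$ forces the compatibility (\ref{lam-rep}) for the pair $(\overline{\rho}, \mathcal{T})$.
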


\begin{proof}
The part (i) follows from (\ref{lam-rep}). To prove the part (ii) we first observe that 
\begin{align*}
&\overline{\rho}(x) \overline{\rho} (y) u - \overline{\rho}(y) \overline{\rho} (x) u \\
&= \rho (\mathfrak{T}x) \overline{\rho}(y) u + \rho (x) \mathcal{T}(   \overline{\rho}(y) u ) + \lambda \rho (x) \overline{\rho}(y) u
- \rho (\mathfrak{T}y) \overline{\rho}(x) u + \rho (y) \mathcal{T}(   \overline{\rho}(x) u ) + \lambda \rho (y) \overline{\rho}(x) u \\
&= \rho (\mathfrak{T}x) \rho (\mathfrak{T}y) u + \rho (\mathfrak{T}x) \rho (y) \mathcal{T}(u) + \lambda \rho (\mathfrak{T}x) \rho(y) u + \rho (x) \rho (\mathfrak{T}y) \mathcal{T}(u)  \\
& ~~ + \lambda \rho (x) \rho (\mathfrak{T}y) u + \lambda \rho (x) \rho (y) \mathcal{T}(u) + \lambda^2 \rho(x) \rho (y) u  \\
& ~~ - \rho (\mathfrak{T}y) \rho (\mathfrak{T}x) u - \rho (\mathfrak{T}y) \rho (x) \mathcal{T}(u) - \lambda \rho (\mathfrak{T}y) \rho(x) u - \rho (y) \rho (\mathfrak{T}x) \mathcal{T}(u) \\
& ~~ - \lambda \rho (y) \rho (\mathfrak{T}x) u - \lambda \rho (y) \rho (x) \mathcal{T}(u) - \lambda^2 \rho(y) \rho (x) u \\
&= \rho ([\mathfrak{T}x, \mathfrak{T}y])u + \rho \big( [\mathfrak{T}x, y] + [x, \mathfrak{T}y] + \lambda [x,y]  \big) \mathcal{T}(u) + \lambda \rho \big( [\mathfrak{T}x, y] + [x, \mathfrak{T}y] + \lambda [x,y]   \big) u \\
&= \rho (\mathfrak{T}[x,y]_\mathfrak{T}) u + \rho ([x,y]_\mathfrak{T}) \mathcal{T}(u) + \lambda \rho ([x,y]_\mathfrak{T}) u \\
&= \overline{\rho} ([x,y]_\mathfrak{T}) u.
\end{align*}
This shows that $\overline{\mathcal{V}} = (\mathcal{V}, \overline{\rho})$ is a representation of the Lie algebra $\mathfrak{g}_\mathfrak{T}$. Moreover, we have
\begin{align*}
\overline{\rho} (\mathfrak{T}x) \mathcal{T}(u) =~& \rho (\mathfrak{T}^2 (x)) \mathcal{T}(u) + \rho (\mathfrak{T}x) \mathcal{T}^2(u) + \lambda \rho (\mathfrak{T}x) \mathcal{T}(u) \\
=~& \mathcal{T} \big(  \overline{\rho}(\mathfrak{T}x) u + \overline{\rho}(x) \mathcal{T}(u) + \lambda \overline{\rho}(x) u   \big) ~~ \quad (\text{from } (i)).
\end{align*}
This shows that $(\overline{\mathcal{V}}, \mathcal{T})$ is a representation of $(\mathfrak{g}_\mathfrak{T}, \mathfrak{T})$.
\end{proof}

\begin{remark}
When considering the adjoint representation $(\mathfrak{g}, \mathfrak{T})$ of the $\lambda$-weighted Rota-Baxter Lie algebra $(\mathfrak{g}, \mathfrak{T})$, the representation of the above theorem  is the adjoint representation of the $\lambda$-weighted Rota-Baxter Lie algebra $(\mathfrak{g}_\mathfrak{T}, \mathfrak{T})$.
\end{remark}

In the following, we will prove another relevant result that will be useful in the next section to construct the cohomology of $\lambda$-weighted Rota-Baxter Lie algebras.

\begin{thm}\label{thm-coho}
Let $(\mathfrak{g}, \mathfrak{T})$ be a $\lambda$-weighted Rota-Baxter Lie algebra and $(\mathcal{V}, \mathcal{T})$ be a representation of it. Define a map $\widetilde{\rho} : \mathfrak{g} \rightarrow \mathrm{End}(\mathcal{V})$ by
\begin{align*}
\widetilde{\rho}(x) u = \rho (\mathfrak{T}(x)) u - \mathcal{T} (\rho (x) u), ~ \text{ for } x \in \mathfrak{g}, u \in \mathcal{V}.
\end{align*}
Then $\widetilde{\rho}$ defines a representation of the Lie algebra $\mathfrak{g}_\mathfrak{T}$ on $\mathcal{V}$. Moreover, $(\widetilde{\mathcal{V}} = (\mathcal{V}, \widetilde{\rho}), \mathcal{T})$ is a representation of the $\lambda$-weighted Rota-Baxter Lie algebra $(\mathfrak{g}_\mathfrak{T}, \mathfrak{T})$.
\end{thm}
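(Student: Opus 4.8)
The plan is to verify the two assertions by direct computation, relying only on the representation axiom (\ref{lam-rep}) for $(\rho, \mathcal{T})$, the fact that $\rho$ is a Lie algebra representation, and the weighted Rota-Baxter identity (\ref{lam-rota-id}) rewritten as $[\mathfrak{T}x, \mathfrak{T}y] = \mathfrak{T}([x,y]_\mathfrak{T})$, which was established in Proposition \ref{defor-lie}. The scheme parallels the previous theorem (the one for $\overline{\rho}$), but because $\mathcal{T}$ appears on the outside in the definition of $\widetilde{\rho}$, more care is needed in tracking the $\mathcal{T}$- and $\mathcal{T}^2$-terms.

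For the first assertion I would expand
\[
\widetilde{\rho}(x)\widetilde{\rho}(y)u = \rho(\mathfrak{T}x)\big(\rho(\mathfrak{T}y)u - \mathcal{T}(\rho(y)u)\big) - \mathcal{T}\big(\rho(x)(\rho(\mathfrak{T}y)u - \mathcal{T}(\rho(y)u))\big),
\]
and then rewrite the cross term $\rho(\mathfrak{T}x)\mathcal{T}(\rho(y)u)$ using (\ref{lam-rep}) with $u$ replaced by $\rho(y)u$. After this substitution the two copies of $\mathcal{T}(\rho(x)\mathcal{T}(\rho(y)u))$ cancel, leaving an expression in which $\mathcal{T}$ occurs only once. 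Subtracting the $x \leftrightarrow y$ swapped expression and applying $\rho([a,b]) = \rho(a)\rho(b) - \rho(b)\rho(a)$ repeatedly, the commutators collect into $\rho([\mathfrak{T}x, \mathfrak{T}y])u - \mathcal{T}(\rho([\mathfrak{T}x,y] + [x, \mathfrak{T}y] + \lambda[x,y])u)$. Recognizing the inner bracket as $[x,y]_\mathfrak{T}$ and using $[\mathfrak{T}x, \mathfrak{T}y] = \mathfrak{T}([x,y]_\mathfrak{T})$, this becomes exactly $\widetilde{\rho}([x,y]_\mathfrak{T})u$, so $\widetilde{\rho}$ is a representation of $\mathfrak{g}_\mathfrak{T}$.

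For the second assertion I must verify (\ref{lam-rep}) for the pair $(\widetilde{\rho}, \mathcal{T})$ over $(\mathfrak{g}_\mathfrak{T}, \mathfrak{T})$, namely
\[
\widetilde{\rho}(\mathfrak{T}x)(\mathcal{T}u) = \mathcal{T}\big(\widetilde{\rho}(\mathfrak{T}x)u + \widetilde{\rho}(x)(\mathcal{T}u) + \lambda\widetilde{\rho}(x)u\big).
\]
Expanding the left side and applying (\ref{lam-rep}) with $x$ replaced by $\mathfrak{T}x$ collapses it to $\mathcal{T}(\rho(\mathfrak{T}^2 x)u) + \lambda\mathcal{T}(\rho(\mathfrak{T}x)u)$. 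On the right side I would expand each of the three $\widetilde{\rho}$-terms through the definition of $\widetilde{\rho}$, producing several $\mathcal{T}$- and $\mathcal{T}^2$-terms, and then rewrite the surviving mixed term $\mathcal{T}(\rho(\mathfrak{T}x)(\mathcal{T}u))$ once more via (\ref{lam-rep}) to expose its $\mathcal{T}^2$ content. I expect all $\mathcal{T}^2$-contributions to cancel in pairs, leaving precisely $\mathcal{T}(\rho(\mathfrak{T}^2 x)u) + \lambda\mathcal{T}(\rho(\mathfrak{T}x)u)$, matching the left side.

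I anticipate the main obstacle to be purely bookkeeping rather than conceptual: in the second assertion one applies (\ref{lam-rep}) in two distinct places — once to collapse the left side and once to expand the mixed term on the right — and must then confirm that the full collection of $\mathcal{T}^2$-terms cancels exactly. Getting these cancellations to close is the only delicate point, since a single misplaced sign would leave a spurious $\mathcal{T}^2$-term; beyond this, no difficulty is expected.
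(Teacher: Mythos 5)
Your proposal is correct and follows essentially the same route as the paper: for the first assertion you expand the commutator, apply (\ref{lam-rep}) to the cross terms $\rho(\mathfrak{T}x)\mathcal{T}(\rho(y)u)$ so that the nested $\mathcal{T}$-terms cancel, and then invoke the representation property of $\rho$ together with $[\mathfrak{T}x,\mathfrak{T}y]=\mathfrak{T}([x,y]_\mathfrak{T})$, exactly as the paper does. For the second assertion your plan of applying (\ref{lam-rep}) to $\rho(\mathfrak{T}^2x)(\mathcal{T}u)$ and to $\rho(\mathfrak{T}x)(\mathcal{T}u)$ and checking that the $\mathcal{T}^2$-terms cancel is the same computation the paper performs, merely organized as reducing both sides to a common normal form rather than transforming the left side into the right.
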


\begin{proof}
For any $x, y \in \mathfrak{g}_\mathfrak{T}$ and $u \in \mathcal{V}$, we have
\begin{align*}
&\widetilde{\rho} (x) \widetilde{\rho}(y) u ~-~ \widetilde{\rho} (y) \widetilde{\rho}(x) u \\
&= \widetilde{\rho}(x) \big(  \rho (\mathfrak{T}y) u - \mathcal{T} (\rho (y) u) \big) ~-~ \widetilde{\rho}(y) \big(  \rho (\mathfrak{T}x) u - \mathcal{T} (\rho (x) u) \big) \\
&= \rho (\mathfrak{T}x) \rho (\mathfrak{T}y) u - \rho (\mathfrak{T}x) \mathcal{T}(\rho(y) u) - \mathcal{T} \big( \rho(x) \rho(\mathfrak{T}y)u - \rho(x) \mathcal{T}(\rho(y) u)  \big) \\
& ~~ - \rho (\mathfrak{T}y) \rho (\mathfrak{T}x) u + \rho (\mathfrak{T}y) \mathcal{T}(\rho(x) u) + \mathcal{T} \big( \rho(y) \rho(\mathfrak{T}x)u - \rho(y) \mathcal{T}(\rho(x) u)  \big) \\
&= \rho (\mathfrak{T}x) \rho (\mathfrak{T}y) u - \mathcal{T} \bigg(  \rho (\mathfrak{T}x) \rho(y) u + \cancel{\rho(x) \mathcal{T}(\rho(y) u)} + \lambda \rho(x) \rho(y) u \bigg) - \mathcal{T} \bigg( \rho(x) \rho(\mathfrak{T}y) u - \cancel{\rho(x) \mathcal{T}(\rho(y) u)}   \bigg) \\
& ~~ - \rho (\mathfrak{T}y) \rho (\mathfrak{T}x) u + \mathcal{T} \bigg(  \rho (\mathfrak{T}y) \rho(x) u + \cancel{\rho(y) \mathcal{T}(\rho(x) u)} + \lambda \rho(y) \rho(x) u \bigg) + \mathcal{T} \bigg( \rho(y) \rho(\mathfrak{T}x) u - \cancel{\rho(y) \mathcal{T}(\rho(x) u)}   \bigg) \\
&= \rho ([\mathfrak{T}x, \mathfrak{T}y])u ~-~ \mathcal{T} \big( \rho([\mathfrak{T}x, y] + [x, \mathfrak{T}y] + \lambda [x, y]) u  \big)\\
&= \rho (\mathfrak{T} [x,y]_\mathfrak{T})u ~-~ \mathcal{T} \big( \rho([x, y]_\mathfrak{T}) u  \big) \\
&= \widetilde{\rho} ([x,y]_\mathfrak{T})u.
\end{align*}
This shows that $\widetilde{\mathcal{V}} = (\mathcal{V}, \widetilde{\rho})$ defines a representation of the Lie algebra $\mathfrak{g}_\mathfrak{T}$. Moreover, we have
\begin{align*}
&\widetilde{\rho} (\mathfrak{T}x)( \mathcal{T} u) \\
&= \rho (\mathfrak{T}^2 (x)) (\mathcal{T}u) - \mathcal{T} (\rho (\mathfrak{T}x)(\mathcal{T}u)) \\
&= \mathcal{T} \big(  \rho (\mathfrak{T}^(x)) u + \rho (\mathfrak{T}x) (\mathcal{T}u)  + \lambda \rho (\mathfrak{T}x) u  \big) ~-~ \mathcal{T}^2 \big(     \rho (\mathfrak{T}x) u + \rho (x) (\mathcal{T}u) + \lambda \rho(x) u \big) \\
&= \mathcal{T} \big(  \rho (\mathfrak{T}^2(x)) u - \mathcal{T}(\rho (\mathfrak{T}x)u)  \big) + \mathcal{T} \big( \rho (\mathfrak{T}x) (\mathcal{T}u) - \mathcal{T} (\rho(x) (\mathcal{T}u))  \big) + \mathcal{T} \big(  \lambda \rho (\mathfrak{T}x) u - \lambda \mathcal{T}(\rho(x) u)  \big) \\
&= \mathcal{T} \big( \widetilde{\rho}(\mathfrak{T}x) u + \widetilde{\rho}(x)(\mathcal{T}u) + \lambda \widetilde{\rho} (x) u       \big)
\end{align*}
which shows that $(\widetilde{\mathcal{V}}, \mathcal{T})$ is a representation of $(\mathfrak{g}_\mathfrak{T}, \mathfrak{T})$.
\end{proof}

\section{Cohomology of weighted Rota-Baxter Lie algebras}\label{sec-3}
In this section, we first recall the Chevalley-Eilenberg cohomology of a Lie algebra with coefficients in a representation. Then we define the cohomology of a weighted Rota-Baxter Lie algebra with coefficients in a representation. This cohomology is obtained as a byproduct of the Chevalley-Eilenberg cohomology of the underlying Lie algebra with the cohomology of the weighted Rota-Baxter operator. Applications of cohomology to abelian extensions and formal $1$-parameter deformations are given in the next section. 

\medskip

Let $\mathfrak{g}$ be a Lie algebra and $\mathcal{V} = (\mathcal{V}, \rho)$ be a representation of it. The Chevalley-Eilenberg cohomology of $\mathfrak{g}$ with coefficients in $\mathcal{V}$ is given by the cohomology of the cochain complex $\{ C^*_\mathsf{CE} (\mathfrak{g}, \mathcal{V}), \delta_\mathsf{CE} \}$, where $C^n_\mathsf{CE} (\mathfrak{g}, \mathcal{V}) = \mathrm{Hom}(\wedge^n \mathfrak{g}, \mathcal{V})$ for $n \geq 0$ and the coboundary map $\delta_\mathsf{CE} : C^n_\mathsf{CE} (\mathfrak{g}, \mathcal{V}) \rightarrow C^{n+1}_\mathsf{CE} (\mathfrak{g}, \mathcal{V})$ given by
\begin{align*}
(\delta_\mathsf{CE} f) (x_1, \ldots, x_{n+1}) =~& \sum_{i=1}^{n+1} (-1)^{i+n} ~ \rho (x_i ) f (x_1, \ldots, \widehat{x_i}, \ldots, x_{n+1}) \\
&+ \sum_{1 \leq i < j \leq n+1} (-1)^{i+j+n+1} ~ f ([x_i, x_j], x_1, \ldots, \widehat{x_i}, \ldots, \widehat{x_j}, \ldots, x_{n+1}),
\end{align*}
for $f \in C^n_\mathsf{CE}(\mathfrak{g}, \mathcal{V})$ and $x_1, \ldots, x_{n+1} \in \mathfrak{g}$.

Let $(\mathfrak{g}, \mathfrak{T})$ be a $\lambda$-weighted Rota-Baxter Lie algebra and $(\mathcal{V}, \mathcal{T})$ be a representation of it. Then we have seen in Theorem \ref{thm-coho} that $\widetilde{\mathcal{V}} = (\mathcal{V}, \widetilde{\rho})$ is a representation of the Lie algebra $\mathfrak{g}_\mathfrak{T}$. Therefore, one can define the corresponding Chevalley-Eilenberg cohomology. More precisely, for each $n \geq 0$, we define
\begin{align*}
C^n_\mathsf{CE} (\mathfrak{g}_\mathfrak{T}, \widetilde{\mathcal{V}}) = \mathrm{Hom} (\wedge^n \mathfrak{g}, \mathcal{V})
\end{align*}
and a coboundary map $\partial_\mathsf{CE} : C^n_\mathsf{CE} (\mathfrak{g}_\mathfrak{T}, \widetilde{\mathcal{V}}) \rightarrow C^{n+1}_\mathsf{CE} (\mathfrak{g}_\mathfrak{T}, \widetilde{\mathcal{V}})$ given by
\begin{align*}
&(\partial_\mathsf{CE} f) (x_1, \ldots, x_{n+1}) \\
&= \sum_{i=1}^{n+1} (-1)^{i+n} ~ \widetilde{\rho} (x_i) f (x_1, \ldots, \widehat{x_i}, \ldots, x_{n+1}) \\
& ~~ + \sum_{1 \leq i < j \leq n+1} (-1)^{i+j+n+1} ~f ([x_i, x_j]_\mathfrak{T}, x_1, \ldots, \widehat{x_i}, \ldots, \widehat{x_j}, \ldots, x_{n+1})\\
&= \sum_{i=1}^{n+1} (-1)^{i+n} ~ \rho (\mathfrak{T}(x_i))   f (x_1, \ldots, \widehat{x_i}, \ldots, x_{n+1}) - \sum_{i=1}^{n+1} (-1)^{i+n} ~ \mathcal{T} \big( \rho (x_i)  f (x_1, \ldots, \widehat{x_i}, \ldots, x_{n+1}) \big)   \\
& ~~ + \sum_{1 \leq i < j \leq n+1} (-1)^{i+j+n+1} ~f ([\mathfrak{T}(x_i), x_j] + [x_i, \mathfrak{T}(x_j)] + \lambda [x_i, x_j], x_1, \ldots, \widehat{x_i}, \ldots, \widehat{x_j}, \ldots, x_{n+1}).
\end{align*}
Then $\{ C^*_\mathsf{CE} (\mathfrak{g}_\mathfrak{T}, \widetilde{\mathcal{V}}), \partial_\mathsf{CE} \}$ is a cochain complex. The corresponding cohomology groups are called the cohomology of $\mathfrak{T}$ with coefficients in the representation $\mathcal{T}$.

\begin{remark}
When $(\mathcal{V}, \mathcal{T}) = (\mathfrak{g}, \mathfrak{T})$ is the adjoint representation of the $\lambda$-weighted Rota-Baxter Lie algebra $(\mathfrak{g}, \mathfrak{T})$, one may consider the cohomology of $\mathfrak{T}$ with coefficients in the representation $\mathfrak{T}$ itself. In \cite{das-weighted} the author defines the cohomology of a $\lambda$-weighted Rota-Baxter operators motivated from their Maurer-Cartan characterizations. It follows that the cohomology of $\mathfrak{T}$ in the sense of \cite{das-weighted} is isomorphic to our cohomology of $\mathfrak{T}$ with coefficients in the representation $\mathfrak{T}$ itself.
\end{remark}

\medskip

We will now in a position to define the cohomology of the $\lambda$-weighted Rota-Baxter Lie algebra $(\mathfrak{g}, \mathfrak{T})$ with coefficients in the representation $(\mathcal{V}, \mathcal{T})$. We first consider the two cochain complexes, namely,

\medskip
\begin{itemize}
\item[$\diamondsuit$] the Chevalley-Eilenberg cochain complex $\{ C^*_\mathsf{CE} (\mathfrak{g}, V), \delta_{\mathsf{CE}} \}$ defining the cohomology of the Lie algebra with coefficients in the representation $\mathcal{V}$,

\medskip

\item[$\diamondsuit$] the complex $\{ C^*_\mathsf{CE} (\mathfrak{g}_\mathfrak{T}, \widetilde{V}), \partial_\mathsf{CE} \}$ defining the cohomology of $\mathfrak{T}$ with coefficients in the representation $\mathcal{T}.$
\end{itemize}

\medskip

The following result is similar to \cite[Proposition 5.1]{zhou}.

\begin{prop}\label{cochain-map}
The collection of maps $\{ \Phi^n : C^n_\mathsf{CE} (\mathfrak{g}, V) \rightarrow C^n_\mathsf{CE} (\mathfrak{g}_\mathfrak{T}, \widetilde{V}) \}_{n \geq 0}$ defined by
\begin{align*}
&\Phi^0 = \mathrm{id}_\mathcal{V}, ~~ \text{ and } \\
&\Phi^n (f) (x_1, \ldots, x_n) = f (\mathfrak{T}(x_1), \ldots, \mathfrak{T}(x_n)) - \sum_{k=0}^{n-1} \lambda^{n-k-1}  \sum_{i_1 < \cdots < i_k} \mathcal{T} \circ f \big( x_1, \ldots, \mathfrak{T}(x_{i_1}), \ldots, \mathfrak{T}(x_{i_k}), \ldots, x_n    \big)
\end{align*}
is a morphism of cochain complexes from $\{ C^*_\mathsf{CE} (\mathfrak{g}, V), \delta_{\mathsf{CE}} \}$ to $\{ C^*_\mathsf{CE} (\mathfrak{g}_\mathfrak{T}, \widetilde{V}), \partial_\mathsf{CE} \}$, i.e., $ \partial_\mathsf{CE} \circ \Phi^n = \Phi^{n+1} \circ \delta_\mathsf{CE}$, for $n \geq 0$.
\end{prop}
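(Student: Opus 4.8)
The plan is to verify the cochain-map identity $\partial_{\mathsf{CE}} \circ \Phi^n = \Phi^{n+1} \circ \delta_{\mathsf{CE}}$ by a direct computation: fix $f \in C^n_\mathsf{CE}(\mathfrak{g}, \mathcal{V})$ and $x_1, \ldots, x_{n+1} \in \mathfrak{g}$, expand both sides on $(x_1, \ldots, x_{n+1})$, and match them term by term. To keep the bookkeeping under control I would write, for a subset $S \subseteq \{1, \ldots, m\}$, $f^{(S)}(x_1, \ldots, x_m)$ for the value of $f$ on the tuple in which exactly the entries indexed by $S$ are replaced by their $\mathfrak{T}$-images; in this notation $\Phi^n(f) = f^{(\{1,\ldots,n\})} - \sum_{S \subsetneq \{1,\ldots,n\}} \lambda^{n-|S|-1}\, \mathcal{T}\circ f^{(S)}$, and likewise for $\Phi^{n+1}$ with the exponent $\lambda^{n-|S|}$ and $S \subsetneq \{1,\ldots,n+1\}$.

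Before grinding I would record two structural facts that make the leading behaviour transparent. First, by Proposition \ref{defor-lie} the map $\mathfrak{T} : \mathfrak{g}_\mathfrak{T} \to \mathfrak{g}$ is a homomorphism of Lie algebras, so the top term $f \mapsto f(\mathfrak{T}(\cdot), \ldots, \mathfrak{T}(\cdot)) = f^{(\{1,\ldots,n\})}$ is just the pullback along $\mathfrak{T}$; the only nonstandard feature of the target differential $\partial_\mathsf{CE}$ is that its action part uses $\widetilde{\rho}(x) = \rho(\mathfrak{T}x) - \mathcal{T}(\rho(x) \cdot)$ instead of the honest pullback action $x \mapsto \rho(\mathfrak{T}x)$. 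Second, the two algebraic inputs I will use repeatedly are the defining identity $[x_i,x_j]_\mathfrak{T} = [\mathfrak{T}x_i, x_j] + [x_i, \mathfrak{T}x_j] + \lambda[x_i,x_j]$ together with its $\mathfrak{T}$-image $\mathfrak{T}([x_i,x_j]_\mathfrak{T}) = [\mathfrak{T}x_i, \mathfrak{T}x_j]$ (both from Proposition \ref{defor-lie}), and the representation identity \eqref{lam-rep}, namely $\rho(\mathfrak{T}x)(\mathcal{T}u) = \mathcal{T}\big(\rho(\mathfrak{T}x)u + \rho(x)(\mathcal{T}u) + \lambda\rho(x)u\big)$, which is exactly what is needed whenever an outer $\mathcal{T}$ collides with a $\rho(\mathfrak{T}x)$ coming from $\widetilde{\rho}$.

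With these in hand I would split the matching into two families. For the \emph{action terms}, the left side $\partial_\mathsf{CE}\Phi^n(f)$ feeds $\widetilde{\rho}(x_i) = \rho(\mathfrak{T}x_i) - \mathcal{T}(\rho(x_i)\cdot)$ into $\Phi^n(f)$, while the right side $\Phi^{n+1}(\delta_\mathsf{CE}f)$ produces the $\rho$-terms of $\delta_\mathsf{CE}f$ evaluated on $\mathfrak{T}$-decorated tuples. Here the $\rho(\mathfrak{T}x_i)$ piece of $\widetilde{\rho}$ matches the decorated-slot action terms directly; when instead $\rho(\mathfrak{T}x_i)$ meets an outer $\mathcal{T}$ inside $\Phi^n(f)$, identity \eqref{lam-rep} converts it into a sum of $\mathcal{T}(\cdots)$ terms that combine with the $-\mathcal{T}(\rho(x_i)\cdot)$ piece and with the outer-$\mathcal{T}$ terms of $\Phi^{n+1}$, the power of $\lambda$ being tracked by the number of undecorated slots. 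For the \emph{bracket terms}, the argument $[x_i,x_j]_\mathfrak{T}$ inside $\Phi^n(f)$ is, when its slot is left undecorated, expanded by the defining identity into the three summands $[\mathfrak{T}x_i,x_j]$, $[x_i,\mathfrak{T}x_j]$, $\lambda[x_i,x_j]$, and, when its slot is decorated, rewritten by $\mathfrak{T}([x_i,x_j]_\mathfrak{T}) = [\mathfrak{T}x_i,\mathfrak{T}x_j]$; these four shapes are precisely the four decorated forms of the bracket $[x_i,x_j]$ that arise inside $\delta_\mathsf{CE}f$ on the right.

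The main obstacle is this last reconciliation. One bracket slot on the left corresponds to the two slots $i,j$ on the right, so I must show that after summing over all subsets $S$ and all pairs $i<j$ the coefficients—each a power of $\lambda$ equal to the number of slots carrying no $\mathfrak{T}$—assemble exactly into the prescription of $\Phi^{n+1}$. Concretely the content is a binomial-type identity: leaving the bracket slot undecorated on the left (with its extra $\lambda$ from the $\lambda[x_i,x_j]$ piece) must account, with the correct multiplicity, for the right-hand terms with $\{i,j\}\cap S$ of size $0$ or $1$, while decorating the bracket slot accounts for $\{i,j\}\subseteq S$; the two conventions agree precisely because the exponent counts undecorated slots and the bracket slot stands in for $i,j$ together. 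I expect the sign bookkeeping $(-1)^{i+n}$, $(-1)^{i+j+n+1}$ and the $\mathcal{T}$-versus-$\rho(\mathfrak{T}x)$ cancellations to be routine once \eqref{lam-rep} and the Rota-Baxter relation are inserted, so essentially all the content of the proposition sits in verifying that this subset/$\lambda$-power bookkeeping balances—the same mechanism underlying \cite[Proposition 5.1]{zhou}, transported from the Hochschild to the Chevalley-Eilenberg setting.
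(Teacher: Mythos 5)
The paper contains no proof of this proposition at all: it simply remarks that the result is ``similar to \cite[Proposition 5.1]{zhou}'' and moves on, so the intended argument is precisely the direct verification you outline, transported from the Hochschild to the Chevalley--Eilenberg setting. Your plan is sound, and the bookkeeping mechanism you describe does balance. In your notation $\Phi^n(f) = f^{(\{1,\dots,n\})} - \sum_{S\subsetneq\{1,\dots,n\}}\lambda^{n-|S|-1}\,\mathcal{T}\circ f^{(S)}$, one checks for the bracket terms: a summand $\mathcal{T}\circ f\big([\mathfrak{T}x_i,\mathfrak{T}x_j],\dots\big)$ with $m$ of the remaining $n-1$ slots decorated arises on the left from $S'$ containing the bracket slot with $|S'|=m+1$, coefficient $-\lambda^{n-m-2}$, and on the right from $S\supseteq\{i,j\}$ with $|S|=m+2$, coefficient $-\lambda^{(n+1)-|S|-1}=-\lambda^{n-m-2}$; the $[\mathfrak{T}x_i,x_j]$-shape carries $-\lambda^{n-m-1}$ on both sides ($|S'|=m$ versus $|S|=m+1$); and the $[x_i,x_j]$-shape matches because the extra $\lambda$ from the $\lambda[x_i,x_j]$ piece of $[x_i,x_j]_\mathfrak{T}$ exactly compensates, giving $-\lambda^{n-m}$ on both sides. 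The action terms close up under \eqref{lam-rep} as you say, with the $-\mathcal{T}(\rho(x_i)\cdot)$ part of $\widetilde{\rho}$ absorbing the cross terms. So no step fails and no idea is missing; your approach is the same one the paper (implicitly) relies on.

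The one criticism is that your proposal stops where the work begins: the term-matching you declare ``routine'' and ``expect'' to balance is the entire content of the proposition, and a complete write-up must actually display it (as sketched above, together with the analogous check for the action terms) rather than assert it. As submitted, this is a correct and well-organized plan rather than a finished proof --- though it should be said that it is still more than the paper itself provides for this statement.
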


\medskip

Let $(\mathfrak{g}, \mathfrak{T})$ be a $\lambda$-weighted Rota-Baxter Lie algebra and $(\mathcal{V}, \mathcal{T})$ be a representation of it. For each $n \geq 0$, we define an abelian group $C^n_\mathsf{RB} (\mathfrak{g}, V)$ by
\begin{align*}
C^n_\mathsf{RB} (\mathfrak{g}, \mathcal{V}) = \begin{cases} C^0_\mathsf{CE} (\mathfrak{g}, \mathcal{V}) = \mathcal{V}    & \text{ if } n = 0 \\
C^n_\mathsf{CE}(\mathfrak{g}, \mathcal{V}) \oplus C^{n-1}_\mathsf{CE} (\mathfrak{g}_\mathfrak{T}, \widetilde{V}) = \mathrm{Hom} (\wedge^n \mathfrak{g}, \mathcal{V}) \oplus \mathrm{Hom} (\wedge^{n-1} \mathfrak{g}, \mathcal{V}) & \text{ if } n \geq 1
\end{cases}
\end{align*}
and a map $\delta_\mathsf{RB} : C^n_\mathsf{RB} (\mathfrak{g}, \mathcal{V}) \rightarrow C^{n+1}_\mathsf{RB} (\mathfrak{g}, \mathcal{V})$ by
\begin{align*}
\delta_\mathsf{RB} (v) =~& ( \delta_{\mathsf{CE}} (v), -v), ~\text{ for } v \in C^n_\mathsf{RB} (\mathfrak{g}, \mathcal{V}) = \mathcal{V},\\
\delta_\mathsf{RB} (f,g) =~& (\delta_\mathsf{CE} (f), ~-\partial_\mathsf{CE} (g) - \Phi^n (f) ), ~ \text{ for } (f,g) \in C^n_\mathsf{RB} (\mathfrak{g}, \mathcal{V}).
\end{align*}
Note that
\begin{align*}
(\delta_\mathsf{RB})^2 (v) =~& \delta_\mathsf{RB} (\delta_\mathsf{CE} (v), -v ) \\
=~& \big( (\delta_\mathsf{CE})^2 (v), \partial_\mathsf{CE} (v) - \Phi^2 \circ \delta_\mathsf{CE} (v) \big) = 0 \quad (\because ~ \partial_\mathsf{CE} \circ \Phi^0 = \Phi^1 \circ \delta_\mathsf{CE})
\end{align*}
and
\begin{align*}
(\delta_\mathsf{RB})^2 (f,g) =~& \delta_\mathsf{RB} \big( \delta_\mathsf{CE} (f), ~ - \partial_\mathsf{CE} (g) - \Phi^n (f)  \big) \\
=~& \big(  (\delta_\mathsf{CE})^2 (f) , ~ (\partial_\mathsf{CE})^2 (g) + \partial_\mathsf{CE} \circ \Phi^n (f)  - \Phi^{n+1} \circ \delta_\mathsf{CE} (f)  \big)
= 0 \quad  (\because ~ \partial_\mathsf{CE} \circ \Phi^n = \Phi^{n+1} \circ \delta_\mathsf{CE}).
\end{align*}
This shows that $\{ C^*_\mathsf{RB} (\mathfrak{g}, \mathcal{V}), \delta_\mathsf{RB} \}$ is a cochain complex. Let $Z^n_\mathsf{RB} (\mathfrak{g}, \mathcal{V})$ and $B^n_\mathsf{RB} (\mathfrak{g}, \mathcal{V})$ denote the space of $n$-cocycles and $n$-coboundaries, respectively. Then we have $B^n_\mathsf{RB} (\mathfrak{g}, \mathcal{V}) \subset Z^n_\mathsf{RB} (\mathfrak{g}, \mathcal{V})$, for $n \geq 0$. The corresponding quotients
\begin{align*}
H^n_\mathsf{RB} (\mathfrak{g}, \mathcal{V}) := \frac{  Z^n_\mathsf{RB} (\mathfrak{g}, \mathcal{V})}{   B^n_\mathsf{RB} (\mathfrak{g}, \mathcal{V})}, ~\text{ for } n \geq 0
\end{align*}
are called the cohomology of the $\lambda$-weighted Rota-Baxter Lie algebra $(\mathfrak{g}, \mathfrak{T})$ with coefficients in the representation $(\mathcal{V}, \mathcal{T})$.

\medskip

Observe that there is a short exact sequence of cochain complexes
\begin{align*}
0 \rightarrow C^{* -1}_\mathsf{CE} (\mathfrak{g}_\mathfrak{T}, \widetilde{\mathcal{V}}) \xrightarrow{i} C^*_\mathsf{RB} (\mathfrak{g}, \mathcal{V}) \xrightarrow{p} C^*_\mathsf{CE} (\mathfrak{g}, \mathcal{V}) \rightarrow 0
\end{align*}
given by $i (g) = (0, (-1)^{n-1} g)$ and $p(f,g) = f$, for $(f, g) \in C^n_\mathsf{RB}(\mathfrak{g}, \mathcal{V})$. This short exact sequence induces the following long exact sequence on cohomology groups
\begin{align*}
0 \rightarrow H^0_\mathsf{RB}(\mathfrak{g}, \mathcal{V}) \rightarrow H^0_\mathsf{CE} (\mathfrak{g}, \mathcal{V}) \rightarrow H^0_\mathsf{CE} (\mathfrak{g}_\mathfrak{T}, \widetilde{\mathcal{V}}) 
\rightarrow H^1_\mathsf{RB}(\mathfrak{g}, \mathcal{V}) \rightarrow H^1_\mathsf{CE} (\mathfrak{g}, \mathcal{V}) \rightarrow H^1_\mathsf{CE} (\mathfrak{g}_\mathfrak{T}, \widetilde{\mathcal{V}})  \rightarrow \cdots .
\end{align*}

\begin{prop}
Let $(\mathfrak{g}, \mathfrak{T})$ be a $\lambda$-weighted Rota-Baxter Lie algebra and $(\mathcal{V}, \mathcal{T})$ be a representation of it. Then the cohomology of $(\mathfrak{g}, \mathfrak{T})$ with coefficients in the representation $(\mathcal{V}, \mathcal{T})$ is isomorphic to the cohomology of $(\mathfrak{g}, - \lambda \mathrm{id}_\mathfrak{g} - \mathfrak{T})$ with coefficients in the representation $(\mathcal{V}, - \lambda \mathrm{id}_\mathcal{V} - \mathcal{T})$.
\end{prop}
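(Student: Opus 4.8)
The plan is to produce an explicit isomorphism of cochain complexes between the two complexes computing these cohomologies, exploiting the fact that their underlying graded groups literally coincide. Write $\mathfrak{T}' := -\lambda \mathrm{id}_\mathfrak{g} - \mathfrak{T}$ and $\mathcal{T}' := -\lambda \mathrm{id}_\mathcal{V} - \mathcal{T}$, and decorate all data attached to $(\mathfrak{g},\mathfrak{T}')$ and $(\mathcal{V},\mathcal{T}')$ with a prime. Recall from earlier in this section that $(\mathfrak{g},\mathfrak{T}')$ is again a $\lambda$-weighted Rota-Baxter Lie algebra and that $(\mathcal{V},\mathcal{T}')$ is a representation of it, so the primed cohomology is defined. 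Since $C^n_\mathsf{CE}(\mathfrak{g},\mathcal{V}) = \mathrm{Hom}(\wedge^n\mathfrak{g},\mathcal{V})$ and $C^{n-1}_\mathsf{CE}(\mathfrak{g}_{\mathfrak{T}'},\widetilde{\mathcal{V}}') = \mathrm{Hom}(\wedge^{n-1}\mathfrak{g},\mathcal{V})$ depend only on $\mathfrak{g}$ and the $\mathfrak{g}$-action $\rho$, neither of which is altered, the graded groups $C^*_\mathsf{RB}(\mathfrak{g},\mathcal{V})$ are identical for both structures, and the first differential $\delta_\mathsf{CE}$ is unchanged.

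First I would record three elementary sign identities obtained by substituting $\mathfrak{T}' = -\lambda \mathrm{id}_\mathfrak{g} - \mathfrak{T}$, $\mathcal{T}' = -\lambda \mathrm{id}_\mathcal{V} - \mathcal{T}$ and expanding by linearity: the deformed bracket of Proposition \ref{defor-lie} satisfies $[x,y]_{\mathfrak{T}'} = -[x,y]_\mathfrak{T}$; the deformed action of Theorem \ref{thm-coho} satisfies $\widetilde{\rho}' = -\widetilde{\rho}$; and consequently the second differential satisfies $\partial_\mathsf{CE}' = -\partial_\mathsf{CE}$, because each of the two terms in the formula for $\partial_\mathsf{CE}$ picks up exactly one overall sign (from $\widetilde{\rho}'=-\widetilde{\rho}$ and from linearity in the slot carrying $[x_i,x_j]_{\mathfrak{T}'}=-[x_i,x_j]_\mathfrak{T}$). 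These are short direct checks.

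The main step is to show that the cochain map of Proposition \ref{cochain-map} transforms by $\Phi'^n = (-1)^n \Phi^n$ for all $n \geq 0$ (the case $n=0$ being $\Phi'^0=\Phi^0=\mathrm{id}_\mathcal{V}$). I would prove this by expanding $\Phi'^n(f)$ through multilinearity: inserting $\mathfrak{T}'x_i = -\lambda x_i - \mathfrak{T}x_i$ into the slots indexed by a subset $S \subseteq \{1,\ldots,n\}$ produces $\sum_{A\subseteq S}(-1)^{|A|}(-\lambda)^{|S|-|A|}$ times the term with $\mathfrak{T}$ applied only on $A$, while $\mathcal{T}' = -\lambda\mathrm{id}_\mathcal{V} - \mathcal{T}$ splits into a scalar part and a genuine $\mathcal{T}$ part. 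Collecting by the subset $A$ on which $\mathfrak{T}$ finally acts, the coefficients of each plain cochain $f(\ldots,\mathfrak{T}x_i\text{ for }i\in A,\ldots)$ and of its $\mathcal{T}$-image are each governed by the truncated alternating sum $\sum_{t=0}^{\,n-|A|-1}\binom{n-|A|}{t}(-1)^t = -(-1)^{n-|A|}$, valid precisely because $\sum_{t=0}^{n-|A|}\binom{n-|A|}{t}(-1)^t=0$ whenever $A\neq\{1,\ldots,n\}$. A careful bookkeeping then shows that all plain terms with $A\neq\{1,\ldots,n\}$ cancel, the term $A=\{1,\ldots,n\}$ survives with coefficient $(-1)^n$, and the $\mathcal{T}$-terms reassemble with coefficient $-(-1)^n\lambda^{\,n-|A|-1}$, which is exactly $(-1)^n$ times the corresponding coefficient in $\Phi^n(f)$. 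This binomial cancellation, with its nested subset sums and powers of $\lambda$, is the crux and I expect it to be the main obstacle.

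Finally I would define $\Psi^0 = \mathrm{id}_\mathcal{V}$ and $\Psi^n(f,g) = (f,(-1)^{n-1}g)$ for $n\geq 1$; each $\Psi^n$ is manifestly invertible, so $\Psi$ is a graded isomorphism. It remains to verify the intertwining relation $\delta_\mathsf{RB}'\circ\Psi^n = \Psi^{n+1}\circ\delta_\mathsf{RB}$. The first components agree automatically (both equal $\delta_\mathsf{CE} f$), and using $\partial_\mathsf{CE}' = -\partial_\mathsf{CE}$ together with $\Phi'^n=(-1)^n\Phi^n$ one checks that the second components of both sides equal $-(-1)^n\partial_\mathsf{CE}(g)-(-1)^n\Phi^n(f)$; the signs $(-1)^{n-1}$ in $\Psi$ are chosen precisely so that the extra signs coming from the two identities above are absorbed. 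Hence $\Psi$ is an isomorphism of cochain complexes, and passing to cohomology yields $H^*_\mathsf{RB}(\mathfrak{g},\mathcal{V})$ for $(\mathfrak{g},\mathfrak{T})$ isomorphic to that for $(\mathfrak{g},\mathfrak{T}')$ with coefficients in $(\mathcal{V},\mathcal{T}')$, as claimed.
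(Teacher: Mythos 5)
Your proposal is correct and follows essentially the same route as the paper's own proof: the same three sign identities $[x,y]_{\mathfrak{T}'}=-[x,y]_{\mathfrak{T}}$, $\widetilde{\rho}'=-\widetilde{\rho}$, $\partial_{\mathsf{CE}}'=-\partial_{\mathsf{CE}}$, the same key relation $(\Phi')^n=(-1)^n\Phi^n$, and the same cochain isomorphism $(f,g)\mapsto(f,(-1)^{n-1}g)$ intertwining the differentials. The only difference is that you supply the binomial-cancellation verification of $(\Phi')^n=(-1)^n\Phi^n$ (which is correct), whereas the paper states this identity without proof.
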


\begin{proof}
Let $\mathfrak{T}' = - \lambda \mathrm{id}_\mathfrak{g} - \mathfrak{T}$ and $\mathcal{T}' = - \lambda \mathrm{id}_\mathcal{V} - \mathcal{T}$. Then it is easy to see that the Lie algebra $\mathfrak{g}_{\mathfrak{T}'}$ of Proposition \ref{defor-lie} (i) is negative to the Lie algebra $\mathfrak{g}_\mathfrak{T}$. Moreover, the representation $\widetilde{\rho}'$ of the Lie algebra $\mathfrak{g}_{\mathfrak{T}'}$  on $\mathcal{V}$ (we denote this representation by $\widetilde{\rho}'$) as of Theorem \ref{thm-coho} is given by the negative of the representation $\widetilde{\rho}$ of the Lie algebra $\mathfrak{g}_\mathfrak{T}$ on $\mathcal{V}$. Hence the corresponding Chevalley-Eilenberg differentials
\begin{align*}
\partial_\mathsf{CE} : C^n_\mathsf{CE} (\mathfrak{g}_\mathfrak{T}, \widetilde{\mathcal{V}}) \rightarrow  C^{n+1}_\mathsf{CE} (\mathfrak{g}_\mathfrak{T}, \widetilde{\mathcal{V}})  ~~~~~ \text{ and } ~~~~~ \partial_\mathsf{CE}' : C^n_\mathsf{CE} (\mathfrak{g}_{\mathfrak{T}'}, \widetilde{\mathcal{V}}') \rightarrow  C^{n+1}_\mathsf{CE} (\mathfrak{g}_{\mathfrak{T}'}, \widetilde{\mathcal{V}}')
\end{align*}
are related by $\partial_\mathsf{CE}' f = - \partial_\mathsf{CE} f$. Finally, if $(\Phi')^n : C^n_\mathsf{CE} (\mathfrak{g}, \mathcal{V}) \rightarrow C^n_\mathsf{CE} (\mathfrak{g}_{\mathfrak{T}'}, \widetilde{\mathcal{V}}')$ be the map as of Proposition \ref{cochain-map} replacing $\mathfrak{T}$ by $\mathfrak{T}'$ and replacing $\mathcal{T}$ by $\mathcal{T}'$, then we have $(\Phi')^n (f) = (-1)^n ~ \Phi^n (f)$. 

For each $n \geq 0$, define an isomorphism of vector spaces
\begin{align*}
\Xi^n : C^n_\mathsf{CE} (\mathfrak{g}, \mathcal{V}) \oplus C^{n-1}_\mathsf{CE} (\mathfrak{g}_\mathfrak{T}, \widetilde{\mathcal{V}}) ~\rightarrow ~ C^n_\mathsf{CE} (\mathfrak{g}, \mathcal{V}) \oplus C^{n-1}_\mathsf{CE} (\mathfrak{g}_{\mathfrak{T}'}, \widetilde{\mathcal{V}}') ~~~ \text{ by }  ~~~ \Xi^n (f,g) = (f, (-1)^{n-1} g).
\end{align*}
Moreover,
\begin{align*}
(\delta_\mathsf{RB}' \circ \Xi^n) (f,g) =  \delta_\mathsf{RB}' (f, (-1)^{n-1} g) =~& \big( \delta_\mathsf{CE} f, ~(-1)^n \partial_\mathsf{CE}' g  - (\Phi')^n (f) \big) \\
=~& \big(   \delta_\mathsf{CE} f,~ (-1)^{n+1} \partial_\mathsf{CE}g - (-1)^n \Phi^n (f) \big) \\
=~& (\Xi^{n+1} \circ \delta_\mathsf{RB}) (f,g).
\end{align*}
This shows that the collection of maps $\{ \Xi^n \}_{n \geq 0}$ commute with the respective coboundary maps. Hence they induce isomorphism on cohomology.
\end{proof}

\subsection{$H^0$ and $H^1$}

Let $(\mathfrak{g}, \mathfrak{T})$ be a $\lambda$-weighted Rota-Baxter Lie algebra and $(\mathcal{V}, \mathcal{T})$ be a representation of it.
An element $v \in \mathcal{V}$ is in $Z^0_\mathsf{RB}(\mathfrak{g}, \mathcal{V})$ if and only if $(\delta_\mathsf{CE}(v) , -v) = 0$. This holds only when $v=0$. Therefore, it follows from the definition that $H^0_\mathsf{RB}(\mathfrak{g}, \mathcal{V}) = 0$.

\medskip

A pair $(\gamma, v) \in \mathrm{Hom}(\mathfrak{g}, \mathcal{V}) \oplus \mathcal{V}$ is said to be a derivation on the weighted Rota-Baxter Lie algebra $(\mathfrak{g}, \mathfrak{T})$ with coefficients in the representation $(\mathcal{V}, \mathcal{T})$ if they satisfies
\begin{align*}
\gamma ([x,y]) =~& \rho (x) (\gamma (y)) - \rho (y) (\gamma (x)),\\
\gamma (\mathfrak{T}(x)) - \mathcal{T} (\gamma (x)) =~& \mathcal{T} (\rho (x) v) - \rho (\mathfrak{T}(x)) v, ~ \text{ for } x, y \in \mathfrak{g}.
\end{align*}
It follows from the first condition that $\gamma$ is a derivation on the Lie algebra $\mathfrak{g}$ with coefficients in $\mathcal{V}$. The second condition says that the obstruction of vanishing $\gamma \circ \mathfrak{T} - \mathcal{T} \circ \gamma$ is measured by the presence of $v$. We denote the set of all derivations by $\mathrm{Der}(\mathfrak{g}, \mathcal{V})$.

A derivation is said to be inner if it is of the form $(-\delta_\mathsf{CE}(v) , v)$, for some $v \in \mathcal{V}$. The set of all inner derivations are denoted by $\mathrm{InnDer}(\mathfrak{g}, \mathcal{V}).$

It follows from the definition that $Z^1_\mathsf{RB}(\mathfrak{g}, \mathcal{V}) = \mathrm{Der}(\mathfrak{g}, \mathcal{V})$ and $B^1_\mathsf{RB}(\mathfrak{g}, \mathcal{V}) = \mathrm{InnDer}(\mathfrak{g}, \mathcal{V})$. Hence we have $H^1_\mathsf{RB}(\mathfrak{g}, \mathcal{V}) = \frac{ \mathrm{Der}(\mathfrak{g}, \mathcal{V})  }{ \mathrm{InnDer}(\mathfrak{g}, \mathcal{V}) }$, the space of outer derivations.

\subsection{Relation with the cohomology of weighted Rota-Baxter associative algebras}

In \cite{zhou} Wang and Zhou defined the cohomology of a weighted Rota-Baxter associative algebra with coefficients in a Rota-Baxter bimodule. In this subsection, we show that our cohomology is related to the cohomology of \cite{zhou} by suitable skew-symmetrization.

\begin{defn}
(i) A $\lambda$-weighted Rota-Baxter associative algebra is a pair $(\mathfrak{A}, \mathfrak{R})$ in which $\mathfrak{A}$ is an associative algebra and $\mathfrak{R} : \mathfrak{A} \rightarrow \mathfrak{A}$ is a linear map satisfying
\begin{align*}
\mathfrak{R}(a) \cdot \mathfrak{R}(b) = \mathfrak{R} \big(  \mathfrak{R}(a) \cdot b + a \cdot \mathfrak{R}(b) + \lambda a \cdot b    \big), ~ \text{ for } a, b \in \mathfrak{A}.
\end{align*}

(ii) Let $(\mathfrak{A}, \mathfrak{R})$ be a $\lambda$-weighted Rota-Baxter associative algebra.  A Rota-Baxter bimodule over it consists of  a pair $(\mathcal{M}, \mathcal{R})$ in which $\mathcal{M}$ is an $\mathfrak{A}$-bimodule (denote both left and right actions by cdot)  and $\mathcal{R} : \mathcal{M} \rightarrow \mathcal{M}$ is a linear map satisfying for $a \in \mathfrak{A}$, $m \in \mathcal{M}$,
\begin{align*}
\mathfrak{R}(a) \cdot \mathcal{R}(m) =~& \mathcal{R} \big( \mathfrak{R}(a) \cdot m + a \cdot \mathcal{R}(m) + \lambda ~a \cdot m   \big), \\
\mathcal{R}(m) \cdot \mathfrak{R}(a) =~& \mathcal{R} \big(   \mathcal{R}(m) \cdot a + m \cdot \mathfrak{R}(a) + \lambda ~ m \cdot a ).
\end{align*}
\end{defn}

\begin{remark}
It follows from the above definition that any $\lambda$-weighted Rota-Baxter associative algebra $(\mathfrak{A}, \mathfrak{R})$ is a Rota-Baxter bimodule over itself. This is called the adjoint Rota-Baxter bimodule.
\end{remark}

The following result is straightforward.

\begin{prop}
Let $(\mathfrak{A}, \mathfrak{R})$ be a $\lambda$-weighted Rota-Baxter associative algebra. Then $(\mathfrak{A}_c, \mathfrak{R})$ is a $\lambda$-weighted Rota-Baxter Lie algebra, where $\mathfrak{A}_c$ is the vector space $\mathfrak{A}$ with the commutator Lie bracket
\begin{align*}
[a,b]_c = a \cdot b - b \cdot a, ~ \text{ for } a, b \in \mathfrak{A}_c.
\end{align*}
(This is called the skew-symmetrization). Moreover, if $(\mathcal{M}, \mathcal{R})$ is a Rota-Baxter bimodule over $(\mathfrak{A}, \mathfrak{R})$, then $(\mathcal{M}_c, \mathcal{R})$ is a representation of the $\lambda$-weighted Rota-Baxter Lie algebra $(\mathfrak{A}_c, \mathfrak{R})$, where $\mathcal{M}_c = \mathcal{M}$ as a vector space and the representation of the Lie algebra $\mathfrak{A}_c$ on $\mathcal{M}_c$ is given by $\rho (a) (m) = a \cdot m - m \cdot a$, for $a \in \mathfrak{A}_c$ and $m \in \mathcal{M}_c$.
\end{prop}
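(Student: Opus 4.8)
The plan is to verify both assertions by direct expansion, reducing the Lie-theoretic weighted Rota-Baxter identity and its representation analogue to the associative identities already assumed.

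For the first assertion, I would write out the left-hand side of (\ref{lam-rota-id}) for $\mathfrak{R}$ acting on $\mathfrak{A}_c$, namely $[\mathfrak{R}(a), \mathfrak{R}(b)]_c = \mathfrak{R}(a)\cdot\mathfrak{R}(b) - \mathfrak{R}(b)\cdot\mathfrak{R}(a)$, and apply the associative weighted Rota-Baxter identity to each of the two products separately. This yields $\mathfrak{R}(a)\cdot\mathfrak{R}(b) = \mathfrak{R}\big(\mathfrak{R}(a)\cdot b + a\cdot\mathfrak{R}(b) + \lambda\, a\cdot b\big)$ together with the same expression with $a$ and $b$ interchanged. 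Subtracting and pulling both outputs under a single outer $\mathfrak{R}$, the resulting argument is precisely $\mathfrak{R}(a)\cdot b - b\cdot\mathfrak{R}(a) + a\cdot\mathfrak{R}(b) - \mathfrak{R}(b)\cdot a + \lambda(a\cdot b - b\cdot a)$, which equals $[\mathfrak{R}(a),b]_c + [a,\mathfrak{R}(b)]_c + \lambda[a,b]_c$. Thus $\mathfrak{R}$ satisfies (\ref{lam-rota-id}) on $\mathfrak{A}_c$, so $(\mathfrak{A}_c, \mathfrak{R})$ is a $\lambda$-weighted Rota-Baxter Lie algebra.

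For the second assertion, I would first note that $\rho(a)(m) = a\cdot m - m\cdot a$ is the standard commutator construction making an $\mathfrak{A}$-bimodule into a representation of $\mathfrak{A}_c$; checking $\rho([a,b]_c) = \rho(a)\rho(b) - \rho(b)\rho(a)$ is a routine expansion in which the mixed triple products cancel in pairs by bimodule associativity. The substantive step is condition (\ref{lam-rep}). I would expand its left-hand side as $\rho(\mathfrak{R}(a))(\mathcal{R}(m)) = \mathfrak{R}(a)\cdot\mathcal{R}(m) - \mathcal{R}(m)\cdot\mathfrak{R}(a)$ and apply the two Rota-Baxter bimodule identities---the first to $\mathfrak{R}(a)\cdot\mathcal{R}(m)$ and the second to $\mathcal{R}(m)\cdot\mathfrak{R}(a)$. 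Collecting both outputs under one $\mathcal{R}$ and comparing with the expansion of the right-hand side $\mathcal{R}\big(\rho(\mathfrak{R}(a))m + \rho(a)(\mathcal{R}m) + \lambda\,\rho(a)m\big)$, the two arguments agree term by term.

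The computation presents no genuine obstacle; the point requiring the most care is the sign bookkeeping inside the commutators, together with the observation that both bimodule conditions are truly used---the left-action identity governs the term $\mathfrak{R}(a)\cdot\mathcal{R}(m)$ while the right-action identity governs $\mathcal{R}(m)\cdot\mathfrak{R}(a)$. This parallels the first part, where the single associative identity is invoked once for each of the two products $\mathfrak{R}(a)\cdot\mathfrak{R}(b)$ and $\mathfrak{R}(b)\cdot\mathfrak{R}(a)$ before they are subtracted.
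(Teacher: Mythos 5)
Your proof is correct and is precisely the direct verification the paper has in mind: the paper states this proposition without proof, labelling it ``straightforward,'' and your expansion---applying the associative Rota--Baxter identity once to each of $\mathfrak{R}(a)\cdot\mathfrak{R}(b)$ and $\mathfrak{R}(b)\cdot\mathfrak{R}(a)$, and the two bimodule identities to $\mathfrak{R}(a)\cdot\mathcal{R}(m)$ and $\mathcal{R}(m)\cdot\mathfrak{R}(a)$ respectively, before collecting under a single outer operator---is exactly that argument. No gaps; the observation that both the left- and right-action bimodule conditions are genuinely needed is a correct and worthwhile point.
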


\medskip

Let $(\mathfrak{A}, \mathfrak{R})$ be a $\lambda$-weighted Rota-Baxter associative algebra. Then it is known that $(\mathfrak{A}, *_\mathfrak{R})$ is an associative algebra \cite{fard}, where
\begin{align*}
a *_\mathfrak{R} b = \mathfrak{R}(a) \cdot b + a \cdot \mathfrak{R}(b) + \lambda ~ a \cdot b, ~ \text{ for } a, b \in \mathfrak{A}.
\end{align*}
We denote this associative algebra by $\mathfrak{A}_\mathfrak{R}$. Moreover, if $(\mathcal{M}, \mathcal{R})$ is a Rota-Baxter bimodule over the $\lambda$-weighted Rota-Baxter associative algebra $(\mathfrak{A}, \mathfrak{R})$, then it has been observed in \cite{zhou} that $\mathcal{M}$ carries a bimodule structure over the associative algebra $\mathfrak{A}_\mathfrak{R}$ with left and right actions
\begin{align*}
a ~\widetilde{\cdot }~ m =  \mathfrak{R}(a) \cdot m - \mathcal{R} (a \cdot m) ~~ \text{ and } ~~   m  ~\widetilde{\cdot }~  a = m \cdot \mathfrak{R}(a) - \mathcal{R} ( m \cdot a), ~ \text{ for } a \in \mathfrak{A}_\mathfrak{R} ~\text{ and } m \in \mathcal{M}.
\end{align*}
Denote this $\mathfrak{A}_\mathfrak{R}$-bimodule by $\widetilde{\mathcal{M}}$. To define the cohomology of the $\lambda$-weighted Rota-Baxter associative algebra $(\mathfrak{A}, \mathfrak{R})$ with coefficients in the Rota-Baxter bimodule $(\mathcal{M}, \mathcal{R})$, the authors in \cite{zhou} considered two Hochschild cochain complexes, namely $\{ C^*_\mathsf{H} (\mathfrak{A}, \mathcal{M}), \delta_\mathsf{H} \}$ and $\{ C^*_\mathsf{H} (\mathfrak{A}_\mathfrak{R}, \widetilde{M}), \partial_\mathsf{H} \}$. The first one is the Hochschild complex of the given algebra $\mathfrak{A}$ with coefficients in the bimodule $\mathcal{M}$, whereas the second one is the Hochschild complex of the algebra $\mathfrak{A}_\mathfrak{R}$ with coefficients in the bimodule $\widetilde{\mathcal{M}}$. They proved that the collection $\{ \Psi^n :   C^n_\mathsf{H} (\mathfrak{A}, \mathcal{M}) \rightarrow  C^n_\mathsf{H} (\mathfrak{A}_\mathfrak{R}, \widetilde{M}) \}_{n \geq 0}$ of maps
given by
\begin{align*}
&\Psi^0 = \mathrm{id}_\mathcal{M}, ~~ \text{ and } \\
&\Psi^n (f) (a_1, \ldots, a_n) = f (\mathfrak{R}(a_1), \ldots, \mathfrak{R}(a_n)) - \sum_{k=0}^{n-1} \lambda^{n-k-1}  \sum_{i_1 < \cdots < i_k} \mathcal{R} \circ f \big( a_1, \ldots, \mathfrak{R}(a_{i_1}), \ldots, \mathfrak{R}(a_{i_k}), \ldots, a_n    \big)
\end{align*}
defines a morphism of cochain complexes. This allows the authors to consider the cochain complex $\{ C^*_\mathsf{RB}(\mathfrak{A}, \mathcal{M}), \delta_{\mathsf{RB}}^{\mathsf{ass}} \}$, where
\begin{align*}
C^0_\mathsf{RB} (\mathfrak{A}, \mathcal{M}) = C^0_\mathsf{H} (\mathfrak{A}, \mathcal{M}) = \mathcal{M} ~~ \text{ and } ~~ C^n_\mathsf{RB} (\mathfrak{A}, \mathcal{M}) = C^n_\mathsf{H} (\mathfrak{A}, \mathcal{M}) \oplus C^{n-1}_\mathsf{H} (\mathfrak{A}_\mathfrak{R}, \widetilde{\mathcal{M}}), ~ \text{ for } n \geq 1
\end{align*}
and $\delta_\mathsf{RB}^{\mathsf{ass}} : C^n_\mathsf{RB} (\mathfrak{A}, \mathcal{M}) \rightarrow C^{n+1}_\mathsf{RB} (\mathfrak{A}, \mathcal{M})$ given by
\begin{align*}
\delta_\mathsf{RB}^{\mathsf{ass}} = (\delta_\mathsf{H}(f) , ~- \partial_\mathsf{H} (g) - \Psi^n (f)), ~ \text{ for } (f, g) \in C^n_\mathsf{RB}(\mathfrak{A}, \mathcal{M}).
\end{align*}
The corresponding cohomology groups are called the cohomology of the $\lambda$-weighted Rota-Baxter associative algebra $(\mathfrak{A}, \mathfrak{R})$ with coefficients in the Rota-Baxter bimodule $(\mathcal{M}, \mathcal{R})$.

\medskip

To find the connection between the cohomology of a weighted Rota-Baxter associative algebra and the cohomology of the corresponding skew-symmetrized weighted Rota-Baxter Lie algebra, we need the following result.

\begin{prop}
Let $(\mathfrak{A}, \mathfrak{R})$ be a $\lambda$-weighted Rota-Baxter associative algebra and $(\mathcal{M}, \mathcal{R})$ be a Rota-Baxter bimodule. Then $(\mathfrak{A}_c)_\mathfrak{R} = (\mathfrak{A}_\mathfrak{R})_c$ as a Lie algebra. Here $(\mathfrak{A}_c)_\mathfrak{R}$ is the Lie algebra structure on $\mathfrak{A}$ induced by the $\lambda$-weighted Rota-Baxter operator $\mathfrak{R}$ on the Lie algebra $\mathfrak{A}_c$ as in Proposition \ref{defor-lie} (i). Moreover, the representation of the Lie algebra $(\mathfrak{A}_c)_\mathfrak{R}$ on $\widetilde{\mathcal{M}_c}$ and the representation of the Lie algebra $(\mathfrak{A}_\mathfrak{R})_c$ on $(\widetilde{\mathcal{M}})_c$ coincide.
\end{prop}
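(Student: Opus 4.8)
The plan is to observe that in both assertions the two objects being compared are built on the same underlying vector space ($\mathfrak{A}$ for the Lie algebra statement, $\mathcal{M}$ for the representation statement), so it suffices to check that the two bracket operations, respectively the two action maps, agree pointwise. Both verifications are direct expansions in terms of the associative product $\cdot$ of $\mathfrak{A}$, the operator $\mathfrak{R}$, and the module maps, and I do not expect any conceptual difficulty.

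First I would treat the equality of Lie algebras. By Proposition \ref{defor-lie}(i) applied to the $\lambda$-weighted Rota-Baxter Lie algebra $(\mathfrak{A}_c, \mathfrak{R})$, the bracket on $(\mathfrak{A}_c)_\mathfrak{R}$ is
\begin{align*}
[a,b]_\mathfrak{R} = [\mathfrak{R}(a), b]_c + [a, \mathfrak{R}(b)]_c + \lambda [a,b]_c,
\end{align*}
which, on expanding each commutator $[x,y]_c = x \cdot y - y \cdot x$, equals $\mathfrak{R}(a)\cdot b - b \cdot \mathfrak{R}(a) + a \cdot \mathfrak{R}(b) - \mathfrak{R}(b)\cdot a + \lambda(a\cdot b - b \cdot a)$. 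On the other hand, the commutator bracket of the associative algebra $\mathfrak{A}_\mathfrak{R}$ is $a *_\mathfrak{R} b - b *_\mathfrak{R} a$, and substituting $a *_\mathfrak{R} b = \mathfrak{R}(a) \cdot b + a \cdot \mathfrak{R}(b) + \lambda\, a \cdot b$ produces exactly the same six terms. Hence $(\mathfrak{A}_c)_\mathfrak{R} = (\mathfrak{A}_\mathfrak{R})_c$ as Lie algebras.

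Next, for the representations, I would unwind both action maps on $\mathcal{M}$. Using the skew-symmetrized representation $\rho(a)(m) = a \cdot m - m \cdot a$ of $\mathfrak{A}_c$ on $\mathcal{M}_c$ together with Theorem \ref{thm-coho} (taking $\mathfrak{T} = \mathfrak{R}$ and $\mathcal{T} = \mathcal{R}$), the action of $(\mathfrak{A}_c)_\mathfrak{R}$ on $\widetilde{\mathcal{M}_c}$ is
\begin{align*}
\widetilde{\rho}(a) m = \rho(\mathfrak{R}(a)) m - \mathcal{R}(\rho(a) m) = \mathfrak{R}(a)\cdot m - m \cdot \mathfrak{R}(a) - \mathcal{R}(a \cdot m) + \mathcal{R}(m \cdot a).
\end{align*}
For the other side, the action of $(\mathfrak{A}_\mathfrak{R})_c$ on $(\widetilde{\mathcal{M}})_c$ is the commutator $a \,\widetilde{\cdot}\, m - m \,\widetilde{\cdot}\, a$ of the $\widetilde{\mathcal{M}}$-bimodule actions; substituting $a \,\widetilde{\cdot}\, m = \mathfrak{R}(a) \cdot m - \mathcal{R}(a \cdot m)$ and $m \,\widetilde{\cdot}\, a = m \cdot \mathfrak{R}(a) - \mathcal{R}(m \cdot a)$ gives the identical expression. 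This establishes the coincidence of the two representations.

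The computations are routine, and there is no genuine obstacle beyond careful bookkeeping. The only point requiring attention is keeping the order of the left and right module actions straight and tracking the signs on the $\mathcal{R}$-terms, so that the four terms arising from the commutator of $\widetilde{\cdot}$ line up correctly with those produced by $\rho \circ \mathfrak{R}$ and by $\mathcal{R} \circ \rho$. Note also that the first part is logically prior: once the underlying Lie algebras are identified, the statement that the two action maps agree is a meaningful comparison of two representations of one and the same Lie algebra.
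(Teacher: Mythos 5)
Your proposal is correct, and it is precisely the routine verification the paper has in mind: the paper states this proposition without proof (treating it as a direct check), and your expansion of both brackets into the six terms $\mathfrak{R}(a)\cdot b + a\cdot\mathfrak{R}(b) + \lambda\, a\cdot b - \mathfrak{R}(b)\cdot a - b\cdot\mathfrak{R}(a) - \lambda\, b\cdot a$, together with the matching four-term comparison of $\widetilde{\rho}(a)m$ against $a\,\widetilde{\cdot}\,m - m\,\widetilde{\cdot}\,a$, is exactly the intended argument. No gaps.
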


It is known that the standard skew-symmetrization gives rise to a morphism from the Hochschild cochain complex of an associative algebra to the Chevalley-Eilenberg cochain complex of the corresponding skew-symmetrized Lie algebra \cite{loday-book}. Hence the following diagrams commute
\begin{align*}
\xymatrix{
C^n_\mathsf{H} (\mathfrak{A}, \mathcal{M}) \ar[d]_{S_n} \ar[r]^{\delta_\mathsf{H}} & C^{n+1}_\mathsf{H} (\mathfrak{A}, \mathcal{M}) \ar[d]^{S_{n+1}} & & C^n_\mathsf{H} (\mathfrak{A}_\mathfrak{R}, \widetilde{\mathcal{M}}) \ar[d]_{S_n} \ar[r]^{\partial\mathsf{H}} & C^{n+1}_\mathsf{H} (\mathfrak{A}_\mathfrak{R}, \widetilde{\mathcal{M}}) \ar[d]^{S_{n+1}}\\
C^n_\mathsf{CE}(\mathfrak{A}_c, \mathcal{M}_c) \ar[r]_{\delta_\mathsf{CE}} & C^{n+1}_\mathsf{CE} (\mathfrak{A}_c, \mathcal{M}_c) & & C^n_\mathsf{CE} ((\mathfrak{A}_\mathfrak{R})_c , \widetilde{M}_c ) \ar[r]_{\partial_\mathsf{CE}} & C^{n+1}_\mathsf{CE} ( (\mathfrak{A}_\mathfrak{R})_c , \widetilde{M}_c) .
}
\end{align*}
Here $S_*$ are the skew-symmetrization maps. As a consequence, we get the following.

\begin{thm}
Let $(\mathfrak{A}, \mathfrak{R})$  be a $\lambda$-weighted Rota-Baxter associative algebra and $(\mathcal{M}, \mathcal{R})$ be a Rota-Baxter bimodule. Then the collection of maps
\begin{align*}
\mathbbm{S}_n: C^n_\mathsf{RB}(\mathfrak{A}, \mathcal{M}) \rightarrow  C^n_\mathsf{RB}(\mathfrak{A}_c, \mathcal{M}_c), ~\text{ for } n \geq 0
\end{align*}
defined by $\mathbbm{S}_0 = \mathrm{id}_\mathcal{M}$ and $\mathbbm{S}_n = (S_n, S_{n-1})$ for $n \geq 1$, induces a morphism from the cohomology of $(\mathfrak{A}, \mathfrak{R})$ with coefficients in the Rota-Baxter bimodule $(\mathcal{M}, \mathcal{R})$ to the cohomology of $(\mathfrak{A}_c, \mathfrak{R})$ with coefficients in the representation $(\mathcal{M}_c, \mathcal{R}).$ 
\end{thm}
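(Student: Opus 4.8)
**

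The plan is to verify that $\mathbbm{S}_* = \{\mathbbm{S}_n\}_{n \geq 0}$ is a morphism of cochain complexes from $\{C^*_\mathsf{RB}(\mathfrak{A}, \mathcal{M}), \delta_\mathsf{RB}^\mathsf{ass}\}$ to $\{C^*_\mathsf{RB}(\mathfrak{A}_c, \mathcal{M}_c), \delta_\mathsf{RB}\}$, since any morphism of cochain complexes induces a morphism on cohomology. Thus it suffices to show that $\delta_\mathsf{RB} \circ \mathbbm{S}_n = \mathbbm{S}_{n+1} \circ \delta_\mathsf{RB}^\mathsf{ass}$ for each $n \geq 0$. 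The overall strategy is to reduce this single identity to three already-available commutativity facts and then assemble them.

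First I would recall the structural ingredients. By the preceding proposition, $(\mathfrak{A}_c)_\mathfrak{R} = (\mathfrak{A}_\mathfrak{R})_c$ as Lie algebras and the two representations on $\widetilde{\mathcal{M}_c}$ and $(\widetilde{\mathcal{M}})_c$ coincide; hence the target complex $\{C^*_\mathsf{CE}((\mathfrak{A}_c)_\mathfrak{R}, \widetilde{\mathcal{M}_c}), \partial_\mathsf{CE}\}$ is unambiguous and the two skew-symmetrization maps $S_*$ appearing in the two displayed commuting squares are the standard ones. From the two squares we have the relations
\begin{align*}
S_{n+1} \circ \delta_\mathsf{H} = \delta_\mathsf{CE} \circ S_n \quad \text{ and } \quad S_{n+1} \circ \partial_\mathsf{H} = \partial_\mathsf{CE} \circ S_n.
\end{align*}
The only remaining compatibility I would need is that skew-symmetrization intertwines the two comparison maps $\Psi^n$ and $\Phi^n$, namely $S_n \circ \Psi^n = \Phi^n \circ S_n$. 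This follows because $\Psi^n$ and $\Phi^n$ are given by identical formulas, differing only in replacing the associative inputs and operator $\mathfrak{R}, \mathcal{R}$ by their Lie counterparts, and skew-symmetrization commutes with precomposition by $\mathfrak{R}$ (or $\mathfrak{T}$) in each slot and with postcomposition by $\mathcal{R}$ (or $\mathcal{T}$); the combinatorial sums over $i_1 < \cdots < i_k$ are preserved verbatim by the antisymmetrizing operator $S_n$.

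With these three facts in hand, the main computation is short. For $(f,g) \in C^n_\mathsf{RB}(\mathfrak{A}, \mathcal{M})$ with $n \geq 1$, I would compute both sides and compare componentwise. On one side,
\begin{align*}
\delta_\mathsf{RB}(\mathbbm{S}_n(f,g)) = \delta_\mathsf{RB}(S_n f, S_{n-1} g) = \big( \delta_\mathsf{CE}(S_n f), ~-\partial_\mathsf{CE}(S_{n-1} g) - \Phi^n(S_n f) \big),
\end{align*}
and on the other side,
\begin{align*}
\mathbbm{S}_{n+1}(\delta_\mathsf{RB}^\mathsf{ass}(f,g)) = \mathbbm{S}_{n+1}\big( \delta_\mathsf{H} f, ~-\partial_\mathsf{H} g - \Psi^n f \big) = \big( S_{n+1}(\delta_\mathsf{H} f), ~S_n(-\partial_\mathsf{H} g - \Psi^n f) \big).
\end{align*}
The first components agree by the left commuting square. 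The second components agree by distributing $S_n$ and applying the right commuting square to the $\partial_\mathsf{H} g$ term and the intertwining relation $S_n \circ \Psi^n = \Phi^n \circ S_n$ to the $\Psi^n f$ term. The $n = 0$ base case is handled separately and is immediate since $\mathbbm{S}_0 = \mathrm{id}_\mathcal{M}$ and the degree-zero differentials match via the same left square.

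I expect the main obstacle to be the intertwining relation $S_n \circ \Psi^n = \Phi^n \circ S_n$, since this is the one identity not directly quoted from an earlier cited result and it requires checking that the antisymmetrization of the Hochschild-type formula for $\Psi^n$ reproduces the Chevalley--Eilenberg-type formula for $\Phi^n$. The verification is essentially bookkeeping: one must confirm that $S_n$ commutes with the operator insertions $\mathfrak{R}(a_{i_j}) = \mathfrak{T}(x_{i_j})$ and with the outer application of $\mathcal{R} = \mathcal{T}$, and that the scalar weights $\lambda^{n-k-1}$ and the index sums $\sum_{i_1 < \cdots < i_k}$ transfer unchanged. Everything else is a formal consequence of the two commutative diagrams already established in the excerpt.
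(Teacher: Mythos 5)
Your proposal is correct and follows essentially the same route as the paper: reduce the statement to the cochain-map identity $\delta_\mathsf{RB} \circ \mathbbm{S}_n = \mathbbm{S}_{n+1} \circ \delta_\mathsf{RB}^{\mathsf{ass}}$, verify it componentwise using the two skew-symmetrization squares together with the intertwining relation $S_n \circ \Psi^n = \Phi^n \circ S_n$. In fact you go slightly further than the paper, which invokes $\Phi^n \circ S_n = S_n \circ \Psi^n$ without justification, whereas you correctly sketch why it holds (the antisymmetrization permutes the index subsets $i_1 < \cdots < i_k$ bijectively and commutes with the $\mathfrak{R}$-insertions and the outer $\mathcal{R}$).
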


\begin{proof}
We only need to check that the maps $\{ \mathbbm{S}_n \}_{n \geq 0}$ commute with corresponding coboundary maps. For $(f, g) \in C^n_\mathsf{RB}(\mathfrak{A}, \mathcal{M})$,
\begin{align*}
(\delta_{\mathsf{RB}} \circ \mathbbm{S}_n) (f, g) =~& \delta_\mathsf{RB} (S_n f, S_{n-1} g)\\
=~& \big( \delta_\mathsf{CE} \circ S_n (f),~ - \partial_\mathsf{CE} \circ S_{n-1} (g) - (\Phi^n \circ S_n)(f)   \big)\\
=~& \big(  S_{n+1} \circ \delta_\mathsf{H} (f) , ~ - S_n \circ \partial_\mathsf{H}(g) - S_n \circ \Psi^n (f) \big)  \qquad (\because ~ \Phi^n \circ S_n = S_n \circ \Psi^n)\\
=~& \mathbbm{S}_{n+1} \big(  \delta_\mathsf{H} (f) , ~- \partial_\mathsf{H}(g) - \Psi^n (f)  \big)  = (\mathbbm{S}_{n+1} \circ \delta_\mathsf{RB}^\mathsf{ass})(f,g).
\end{align*}
Hence the result follows.
\end{proof}

\section{Applications of cohomology}\label{sec-4}

In this section, we study abelian extensions and formal $1$-parameter deformations of weighted Rota-Baxter Lie algebras in terms of cohomology.

\subsection{Abelian extensions and $H^2$}
% In this subsection, we study abelian extensions of weighted Rota-Baxter Lie algebras by some representations. We show that isomorphism classes of abelian extensions are classified by the second cohomology group.

Let $(\mathfrak{g}, \mathfrak{T})$ be a $\lambda$-weighted Rota-Baxter Lie algebra. Let $(\mathcal{V}, \mathcal{T})$ be a pair of a vector space $\mathcal{V}$ and a linear map $\mathcal{T} : \mathcal{V} \rightarrow \mathcal{V}$. Note that $(\mathcal{V}, \mathcal{T})$ can be considered as a $\lambda$-weighted Rota-Baxter Lie algebra where the Lie bracket on $\mathcal{V}$ is assumed to be trivial.

\begin{defn}
An abelian extension of  $(\mathfrak{g}, \mathfrak{T})$ by $(\mathcal{V}, \mathcal{T})$ is a short exact sequence of morphisms of $\lambda$-weighted Rota-Baxter Lie algebras 
\begin{align}\label{abelian}
\xymatrix{
0 \ar[r] & (\mathcal{V}, \mathcal{T}) \ar[r]^{i} & (\widehat{\mathfrak{g}}, \widehat{\mathfrak{T}}) \ar[r]^p & (\mathfrak{g}, \mathfrak{T}) \ar[r] & 0.
}
\end{align}
In this case, we say that $(\widehat{\mathfrak{g}}, \widehat{\mathfrak{T}})$ is an abelian extension of $({\mathfrak{g}}, {\mathfrak{T}})$ by $(\mathcal{V}, \mathcal{T}).$
\end{defn}

\begin{defn}
Let $(\widehat{\mathfrak{g}}, \widehat{\mathfrak{T}})$ and $(\widehat{\mathfrak{g}}', \widehat{\mathfrak{T}}')$ be two abelian extensions of  $({\mathfrak{g}}, {\mathfrak{T}})$ by $(\mathcal{V}, \mathcal{T}).$ They are said to be isomorphic if there exists an isomorphism $\phi : (\widehat{\mathfrak{g}}, \widehat{\mathfrak{T}}) \rightarrow (\widehat{\mathfrak{g}}', \widehat{\mathfrak{T}}')$ of $\lambda$-weighted Rota-Baxter Lie algebras which makes the following diagram commutative
\begin{align}\label{abelian-iso}
\xymatrix{
0 \ar[r] & (\mathcal{V}, \mathcal{T}) \ar@{=}[d] \ar[r]^{i} & (\widehat{\mathfrak{g}}, \widehat{\mathfrak{T}}) \ar[d]^\phi \ar[r]^p & (\mathfrak{g}, \mathfrak{T}) \ar@{=}[d] \ar[r] & 0 \\
0 \ar[r] & (\mathcal{V}, \mathcal{T}) \ar[r]_{i'} & (\widehat{\mathfrak{g}}', \widehat{\mathfrak{T}}') \ar[r]_{p'} & (\mathfrak{g}, \mathfrak{T}) \ar[r] & 0 
}
\end{align}
\end{defn}

Let $(\widehat{\mathfrak{g}}, \widehat{\mathfrak{T}})$ be an abelian extension of $({\mathfrak{g}}, {\mathfrak{T}})$ by $(\mathcal{V}, \mathcal{T})$ as of (\ref{abelian}). A section of the map $p$ is a linear map $s : \mathfrak{g} \rightarrow \widehat{ \mathfrak{g}}$ satisfying $p \circ s = \mathrm{id}_\mathfrak{g}$. Note that  a section of $p$ always exists.

Let $s : \mathfrak{g} \rightarrow \widehat{ \mathfrak{g}}$ be a section of the map $p$. We define a linear map $\rho : \mathfrak{g} \rightarrow \mathrm{End} (\mathcal{V})$ by $\rho (x) (u) : = [s(x), i(u)]_{\widehat{\mathfrak{g}}}$, for $x \in \mathfrak{g}$ and $u \in \mathcal{V}$. Then it can be easily check that $\mathcal{V} = (\mathcal{V}, \rho)$ is a representation of the Lie algebra $\mathfrak{g}$. More generally, $(\mathcal{V}, \mathcal{T})$ is a representation of the $\lambda$-weighted Rota-Baxter Lie algebra $(\mathfrak{g}, \mathfrak{T})$. One can easily check that this representation does'nt depend on the choice of the section $s$. We call this as the induced representation on $(\mathcal{V}, \mathcal{T})$.

Suppose $(\mathcal{V}, \mathcal{T})$ is a given representation of the $\lambda$-weighted Rota-Baxter Lie algebra $(\mathfrak{g}, \mathfrak{T})$. We denote by $\mathrm{Ext}(\mathfrak{g}, \mathcal{V})$  the isomorphism classes of abelian extensions of  $(\mathfrak{g}, \mathfrak{T})$ by $(\mathcal{V}, \mathcal{T})$ for which the induced representation on $(\mathcal{V}, \mathcal{T})$ is the prescribed one.

\begin{thm}
Let $(\widehat{\mathfrak{g}}, \widehat{\mathfrak{T}})$ be a $\lambda$-weighted Rota-Baxter Lie algebra and $(\mathcal{V}, \mathcal{T})$ be a representation of it. Then there is a one-to-one correspondence between $\mathrm{Ext}(\mathfrak{g}, \mathcal{V})$  and the second cohomology group $H^2_\mathsf{RB}(\mathfrak{g}, \mathcal{V}).$
\end{thm}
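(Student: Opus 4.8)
The plan is to construct explicit maps in both directions between $\mathrm{Ext}(\mathfrak{g}, \mathcal{V})$ and $H^2_\mathsf{RB}(\mathfrak{g}, \mathcal{V})$ and to verify that they are mutually inverse.

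\textbf{From extensions to cocycles.} Given an abelian extension as in (\ref{abelian}), I would first choose a linear section $s : \mathfrak{g} \to \widehat{\mathfrak{g}}$ of $p$ and identify $\mathcal{V}$ with $\ker p = i(\mathcal{V})$. Using $s$, define $f \in \mathrm{Hom}(\wedge^2 \mathfrak{g}, \mathcal{V})$ and $g \in \mathrm{Hom}(\mathfrak{g}, \mathcal{V})$ by
\[
f(x,y) = [s(x), s(y)]_{\widehat{\mathfrak{g}}} - s([x,y]) \quad \text{and} \quad g(x) = \widehat{\mathfrak{T}}(s(x)) - s(\mathfrak{T}(x)).
\]
Since $p$ is a morphism of $\lambda$-weighted Rota-Baxter Lie algebras, both expressions project to $0$ under $p$ and hence lie in $\mathcal{V}$. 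The next step is to show that $(f,g) \in C^2_\mathsf{RB}(\mathfrak{g}, \mathcal{V})$ is a $2$-cocycle: the condition $\delta_\mathsf{CE} f = 0$ follows from the Jacobi identity of $\widehat{\mathfrak{g}}$, while $\partial_\mathsf{CE} g + \Phi^2(f) = 0$ follows from the fact that $\widehat{\mathfrak{T}}$ satisfies the Rota-Baxter identity (\ref{lam-rota-id}). I would then check that the class $[(f,g)] \in H^2_\mathsf{RB}(\mathfrak{g}, \mathcal{V})$ is independent of the section: any two sections differ by a map $\tau \in \mathrm{Hom}(\mathfrak{g}, \mathcal{V})$, and the resulting cochains differ precisely by $\delta_\mathsf{RB}(\tau)$. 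A short diagram chase using (\ref{abelian-iso}) shows that isomorphic extensions produce the same class, giving a well-defined map $\mathrm{Ext}(\mathfrak{g}, \mathcal{V}) \to H^2_\mathsf{RB}(\mathfrak{g}, \mathcal{V})$.

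\textbf{From cocycles to extensions.} Conversely, given a $2$-cocycle $(f,g)$, I would set $\widehat{\mathfrak{g}} = \mathfrak{g} \oplus \mathcal{V}$ as a vector space and equip it with
\[
[(x,u),(y,v)]_{\widehat{\mathfrak{g}}} = \big([x,y],\ \rho(x)v - \rho(y)u + f(x,y)\big) \quad \text{and} \quad \widehat{\mathfrak{T}}(x,u) = \big(\mathfrak{T}(x),\ \mathcal{T}(u) + g(x)\big).
\]
The vanishing $\delta_\mathsf{CE} f = 0$ makes the bracket satisfy the Jacobi identity, and the relation $\partial_\mathsf{CE} g + \Phi^2(f) = 0$ makes $\widehat{\mathfrak{T}}$ a $\lambda$-weighted Rota-Baxter operator; with the evident inclusion and projection this yields an abelian extension whose induced representation on $(\mathcal{V}, \mathcal{T})$ is the prescribed one. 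To see this map is well defined on cohomology, I would check that replacing $(f,g)$ by $(f,g) + \delta_\mathsf{RB}(\tau)$ produces an isomorphic extension via $\phi(x,u) = (x, u + \tau(x))$.

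\textbf{Mutual inverse and the main obstacle.} The two constructions are mutually inverse: starting from $(f,g)$, building $\widehat{\mathfrak{g}}$, and computing the cocycle with respect to the canonical section $x \mapsto (x,0)$ returns $(f,g)$ on the nose; conversely, any extension is isomorphic to the one built from its associated cocycle. The principal computational obstacle is the verification that the Rota-Baxter identity for $\widehat{\mathfrak{T}}$ is \emph{equivalent} to the second cocycle equation $\partial_\mathsf{CE} g + \Phi^2(f) = 0$. Here one must expand $\Phi^2(f)(x,y) = f(\mathfrak{T}x, \mathfrak{T}y) - \lambda\, \mathcal{T} f(x,y) - \mathcal{T} f(\mathfrak{T}x, y) - \mathcal{T} f(x, \mathfrak{T}y)$ together with $\partial_\mathsf{CE} g$ written via $\widetilde{\rho}(x) = \rho(\mathfrak{T}x) - \mathcal{T}\rho(x)$, and match the two sides of (\ref{lam-rota-id}) for $\widehat{\mathfrak{T}}$. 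This matching crucially uses the representation identity (\ref{lam-rep}) for $(\mathcal{V}, \mathcal{T})$ and the Rota-Baxter identity for $\mathfrak{T}$ on $\mathfrak{g}$; once this bookkeeping is in place, all remaining steps are formal.
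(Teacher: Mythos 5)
Your proposal is correct and follows essentially the same route as the paper: the same construction of $(\widehat{\mathfrak{g}},\widehat{\mathfrak{T}})$ from a $2$-cocycle, the same section-based extraction of the pair $(\psi,\chi)=(f,g)$ from an extension, and the same verification that the two assignments are mutually inverse and independent of choices. The one small omission is in the check that cohomologous cocycles yield isomorphic extensions: a $1$-cochain in $C^1_\mathsf{RB}(\mathfrak{g},\mathcal{V})$ is a pair $(\tau,v)$ with $v\in\mathcal{V}$, so a general coboundary is $\delta_\mathsf{RB}(\tau,v)=(\delta_\mathsf{CE}\tau,\,-\partial_\mathsf{CE}v-\Phi^1\tau)$ and the isomorphism needs the extra term coming from $v$, as in the paper's $\phi(x,u)=(x,\,u-\gamma(x)-\partial_\mathsf{CE}(v)(x))$; your $\phi(x,u)=(x,u+\tau(x))$ treats only the $v=0$ case, though the general case is the same one-line computation.
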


\begin{proof}
Let $(\psi, \chi) \in Z^2_\mathsf{RB} (\mathfrak{g}, \mathcal{V})$ be a $2$-cocycle, i.e., we have $\delta_\mathsf{CE} \psi = 0$ and $- \partial_\mathsf{CE}(\chi) - \Phi^2 (\psi) = 0$. Consider the space $\mathfrak{g} \oplus \mathcal{V}$ with the bracket
\begin{align*}
[(x,u), (y, v)] = ([x,y], \rho(x) v - \rho(y) u + \psi (x,y)).
\end{align*}
Since $\delta_\mathsf{CE} \psi =0$, it follows that the above bracket makes $\mathfrak{g} \oplus \mathcal{V}$ into a Lie algebra. We denote this Lie algebra by $\widehat{\mathfrak{g}}$. We also define a map $\widehat{\mathfrak{T}} : \widehat{\mathfrak{g}} \rightarrow \widehat{\mathfrak{g}}$ by $\widehat{\mathfrak{T}} (x,u) = (\mathfrak{T}(x), \mathcal{T}(u) + \chi (x))$, for $(x,u) \in \widehat{\mathfrak{g}}$. Since $- \partial_\mathsf{CE}(\chi) - \Phi^2 (\psi) = 0$, it follows that $\widehat{\mathfrak{T}}$ is a $\lambda$-weighted Rota-Baxter operator on the Lie algebra $\widehat{\mathfrak{g}}$. In other words, $(\widehat{\mathfrak{g}}, \widehat{\mathfrak{T}})$ is a $\lambda$-weighted Rota-Baxter Lie algebra. Moreover, the short exact sequence
\begin{align*}
\xymatrix{
0 \ar[r] & (\mathcal{V}, \mathcal{T}) \ar[r]^i & (\widehat{\mathfrak{g}}, \widehat{\mathfrak{T}}) \ar[r]^p & ({\mathfrak{g}}, {\mathfrak{T}}) \ar[r] & 0
}
\end{align*}
defines an abelian extension of $({\mathfrak{g}}, {\mathfrak{T}})$ by $(\mathcal{V}, \mathcal{T})$, where $i(u) = (0, u)$ and $p (x,u) = x$, for $x \in \mathfrak{g}$ and $u \in \mathcal{V}$. Note that the canonical section $s : \mathfrak{g} \rightarrow \widehat{ \mathfrak{g}}$, $s(x) = (x, 0)$ induces $(\mathcal{V}, \mathcal{T})$ with the original representation of the $\lambda$-weighted Rota-Baxter Lie algebra $(\mathfrak{g}, \mathfrak{T}).$

Let $(\psi', \chi') \in Z^2_\mathsf{RB} (\mathfrak{g}, \mathcal{V})$ be another $2$-cocycle cohomologous to $(\psi, \chi)$. Then there exists a pair $(\gamma, v) \in \mathrm{Hom} (\mathfrak{g},\mathcal{V}) \oplus \mathcal{V}$ such that
\begin{align*}
(\psi, \chi) - (\psi', \chi') = (\delta_\mathsf{CE} (\gamma),~ - \partial_\mathsf{CE}v - \Phi^1 (\gamma)).
\end{align*}
Consider the map $\phi : \mathfrak{g} \oplus \mathcal{V} \rightarrow \mathfrak{g} \oplus \mathcal{V}$ given by
\begin{align*}
\phi (x, u) = (x, u - \gamma (x) - \partial_\mathsf{CE}(v)(x) ).
\end{align*}
Then it can be easily checked that $\phi$ defines an isomorphism of abelian extensions from $(\widehat{\mathfrak{g}}, \widehat{\mathfrak{T}})$ to $(\widehat{\mathfrak{g}}', \widehat{\mathfrak{T}}')$. Therefore, there is a well-defined map $\Upsilon : H^2_\mathsf{RB}(\mathfrak{g}, \mathcal{V}) \rightarrow \mathrm{Hom}(\mathfrak{g}, \mathcal{V})$.

\medskip

Conversely, let $(\widehat{\mathfrak{g}}, \widehat{\mathfrak{T}})$ be an abelian extension given by (\ref{abelian}). Let $s : \mathfrak{g} \rightarrow \widehat{\mathfrak{g}}$ be a section of the map $p$. We define elements $\psi \in \mathrm{Hom}(\wedge^2 \mathfrak{g}, \mathcal{V})$ and $\chi \in \mathrm{Hom} (\mathfrak{g}, \mathcal{V})$ by
\begin{align*}
\psi (x, y) = [s(x), s(y)]_{  \widehat{\mathfrak{g}}} - s[x,y]  ~~~ \text{ and } ~~~ \chi (x) = \widehat{\mathfrak{T}} (s(x)) - s (\mathfrak{T}(x)), ~ \text{ for } x, y \in \mathfrak{g}.
\end{align*}
Then it follows from a straightforward computation that the pair $(\psi, \chi)$ defines a $2$-cocycle in $Z^2_\mathrm{RB} (\mathfrak{g}, \mathcal{V})$. Moreover, the corresponding cohomology class in $H^2_\mathrm{RB} (\mathfrak{g}, \mathcal{V})$ does'nt depend on the choice of the section $s$.

Let $(\widehat{\mathfrak{g}}, \widehat{\mathfrak{T}})$ and $(\widehat{\mathfrak{g}}', \widehat{\mathfrak{T}}')$ be two isomorphic abelian extensions as of (\ref{abelian-iso}). Let $s : \mathfrak{g} \rightarrow \widehat{\mathfrak{g}}$ be a section of the map $p$. Then we have
\begin{align*}
p' \circ ( \phi \circ s) = p \circ s = \mathrm{id}_\mathfrak{g}.
\end{align*}
This shows that $s' = \phi \circ s : \mathfrak{g} \rightarrow \widehat{\mathfrak{g}}'$ is a section of $p'$. If $(\psi', \chi')$ denotes the $2$-cocycle in $Z^2_\mathrm{RB} (\mathfrak{g}, \mathcal{V})$ corresponding to the abelian extension $(\widehat{\mathfrak{g}}', \widehat{\mathfrak{T}}')$  and section $s' : \mathfrak{g} \rightarrow \widehat{ \mathfrak{g}}'$ of the map $p'$, then
\begin{align*}
\psi' (x,y) =~& [s'(x), s'(y)]_{\widehat{\mathfrak{g}}'} - s' [x,y] \\
=~& [\phi \circ s (x), \phi \circ s (y)]_{\widehat{\mathfrak{g}}'}  - \phi \circ s [x,y] \\
=~& \phi \big(  [s(x), s(y)]_{\widehat{\mathfrak{g}}} - s[x,y]  \big) \\
=~& \phi (\psi (x,y)) = \phi (x,y)     \qquad (\because~ \phi|_\mathcal{V} = \mathrm{id}_\mathcal{V})
\end{align*}
and
\begin{align*}
\chi'(x) =~& \widehat{\mathfrak{T}}' (s'(x)) - s' (\mathfrak{T}(x)) \\
=~& \widehat{\mathfrak{T}}' (\phi \circ s(x)) - \phi \circ s (\mathfrak{T}(x)) \\
=~& \phi \big(  \widehat{\mathfrak{T}} (s(x)) - s (\mathfrak{T}(x))  \big) = \phi (\chi (x)) = \chi (x)   \qquad (\because~ \phi|_\mathcal{V} = \mathrm{id}_\mathcal{V}).
\end{align*}
This show that $(\psi, \chi)$ and $(\psi', \chi')$ correspond to the same elemwnt in $H^2_\mathrm{RB} (\mathfrak{g}, \mathcal{V})$. Therefore, we have a well-defined map $\Omega : \mathrm{Ext}(\mathfrak{g}, \mathcal{V}) \rightarrow H^2_\mathrm{RB} (\mathfrak{g}, \mathcal{V})$. Finally, the map $\Upsilon$ and $\Omega$ are inverses to each other. This completes the proof.
\end{proof}

\subsection{Formal deformations}
In this subsection, we will consider formal deformations of a $\lambda$-weighted Rota-Baxter Lie algebra $(\mathfrak{g}, \mathfrak{T})$. Here we will simultaneously deform the Lie bracket on $\mathfrak{g}$ and the $\lambda$-weighted Rota-Baxter operator $\mathfrak{T}$. We find the relation between such deformations and cohomology of $(\mathfrak{g}, \mathfrak{T})$ with coefficients in the adjoint representation.

Let $(\mathfrak{g}, \mathfrak{T})$ be a $\lambda$-weighted Rota-Baxter Lie algebra. Let $\mu \in C^2_\mathsf{CE} (\mathfrak{g}, \mathfrak{g}) = \mathrm{Hom}(\wedge^2 \mathfrak{g}, \mathfrak{g})$ be the element that corresponds to the Lie bracket on $\mathfrak{g}$, i.e., $\mu (x,y) = [x,y]$, for $x, y \in \mathfrak{g}$. Consider the space $\mathfrak{g}[[t]]$ of formal power series in $t$ with coefficients from $\mathfrak{g}$. Then $\mathfrak{g}[[t]]$ is a ${\bf k}[[t]]$-module.

\begin{defn}
A formal $1$-parameter deformation of $(\mathfrak{g}, \mathfrak{T})$ consists of a pair $(\mu_t, \mathfrak{T}_t)$ of two formal power series
\begin{align*}
&\mu_t = \sum_{i \geq 0} \mu_i t^i, ~ \text{ where } \mu_i \in \mathrm{Hom}(\wedge^2 \mathfrak{g}, \mathfrak{g}) \text{ with } \mu_0 = \mu,\\
&\mathfrak{T}_t = \sum_{i \geq 0} \mathfrak{T}_i t^i, ~ \text{ where } \mathfrak{T}_i \in \mathrm{Hom}(\mathfrak{g}, \mathfrak{g}) \text{ with } \mathfrak{T}_0 = \mathfrak{T}
\end{align*}
such that the ${\bf k}[[t]]$-bilinear map $\mu_t$ defines a Lie algebra structure on $\mathfrak{g}[[t]]$ and the ${\bf k}[[t]]$-linear map $\mathfrak{T}_t : \mathfrak{g}[[t]] \rightarrow \mathfrak{g}[[t]]$ is a $\lambda$-weighted Rota-Baxter operator. In other words, $(\mathfrak{g}[[t]] = (\mathfrak{g}[[t]], \mu_t), \mathfrak{T}_t)$ is a $\lambda$-weighted Rota-Baxter Lie algebra over ${\bf k}[[t]]$.
\end{defn}

Thus, $(\mu_t, \mathfrak{T}_t)$ is a formal $1$-parameter deformation of $(\mathfrak{g}, \mathfrak{T})$ if and only if
\begin{align*}
&\mu_t (x, \mu_t (y, z)) + \mu_t (y, \mu_t ( z, x)) + \mu_t (z, \mu_t (x, y)) = 0,\\
&\mu_t (  \mathfrak{T}_t (x), \mathfrak{T}_t(y)  ) = \mathfrak{T}_t \big(  \mu_t ( \mathfrak{T}_t(x),y  ) + \mu_t (x, \mathfrak{T}_t (y))  + \lambda \mu_t (x, y) \big),
\end{align*}
for $x,y,z \in \mathfrak{g}$. They are equivalent to the following two systems of identities: for $n \geq 0$ and $x,y, z\in \mathfrak{g}$,
\begin{align}
&\sum_{i+j = n} \mu_i (x, \mu_j (y, z)) + \mu_i (y, \mu_j ( z, x)) + \mu_i (z, \mu_j (x, y)) = 0, \label{sys-1}\\
&\sum_{i+j +k = n} \mu_i \big(   \mathfrak{T}_j (x) , \mathfrak{T}_k (y) \big) = \sum_{i+j+ k = n} \mathfrak{T}_i \big(  \mu_j (\mathfrak{T}_k (x) , y)  + \mu_j (x, \mathfrak{T}_k (y)) \big) + \sum_{i+j= n} \lambda~ \mathfrak{T}_i (\mu_j (x, y)). \label{sys-2}
\end{align}

\begin{defn}
Two formal deformations $(\mu_t, \mathfrak{T}_t)$ and $(\mu_t', \mathfrak{T}_t')$ of a $\lambda$-weighted Rota-Baxter Lie algebra $(\mathfrak{g}, \mathfrak{T})$ are said to be equivalent if there is a formal isomorphism
\begin{align*}
\phi_t = \sum_{i \geq 0} \phi_i t^i : \mathfrak{g}[[t]] \rightarrow \mathfrak{g} [[t]], ~ \text{ where } \phi_i \in \mathrm{Hom} (\mathfrak{g}, \mathfrak{g}) \text{ with } \phi_0 = \mathrm{id}_\mathfrak{g}
\end{align*}
such that the ${\bf k}[[t]]$-linear map $\phi_t$ is a morphism of $\lambda$-weighted Rota-Baxter Lie algebras from $(\mathfrak{g}[[t]]' =   (\mathfrak{g}[[t]], \mu_t'), \mathfrak{T}_t')$ to $(\mathfrak{g}[[t]] =   (\mathfrak{g}[[t]], \mu_t), \mathfrak{T}_t)$.
\end{defn}

Therefore, $(\mu_t, \mathfrak{T}_t)$ and $(\mu_t', \mathfrak{T}_t')$  are equivalent if the followings are hold:
\begin{align*}
\phi_t (\mu_t' (x,y) ) = \mu_t (\phi_t (x), \phi_t (y)) \quad \text{ and } \quad \phi_t \circ \mathfrak{T}_t' = \mathfrak{T}_t \circ \phi_t.
\end{align*}

They can be equivalently expressed by the following system of equations: for $n \geq 0$,
\begin{align}
\sum_{i+j = n} \phi_i  \big( \mu_j' (x,y)   \big) =~& \sum_{i+j+k = n} \mu_i \big( \phi_j (x), \phi_k (x) \big), \label{sys-3}\\
\sum_{i+j = n} \phi_i \circ \mathfrak{T}_j' (x) =~& \sum_{i+j = n} \mathfrak{T}_i \circ \phi_j (x). \label{sys-4}
\end{align}

\begin{thm}\label{2-co}
Let $(\mu_t, \mathfrak{T}_t)$ be a formal $1$-parameter deformation of the $\lambda$-weighted Rota-Baxter Lie algebra $(\mathfrak{g}, \mathfrak{T})$. Then $(\mu_1, \mathfrak{T}_1) \in C^2_\mathsf{RB} (\mathfrak{g}, \mathfrak{g})$ is a $2$-cocycle in the cohomology of $(\mathfrak{g}, \mathfrak{T})$ with coefficients in the adjoint representation. Moreover, the corresponding cohomology class in $H^2_\mathsf{RB} (\mathfrak{g}, \mathfrak{g})$ depends only on the equivalence class  of the deformation  $(\mu_t, \mathfrak{T}_t)$.
\end{thm}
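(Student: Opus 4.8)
The plan is to extract the coefficient of $t^1$ from the defining equations of the deformation and recognise it as the $2$-cocycle condition $\delta_\mathsf{RB}(\mu_1, \mathfrak{T}_1) = 0$. Recall from the definition of $\delta_\mathsf{RB}$ that $(\mu_1, \mathfrak{T}_1) \in C^2_\mathsf{RB}(\mathfrak{g}, \mathfrak{g})$ is a $2$-cocycle precisely when its image $\big(\delta_\mathsf{CE}(\mu_1),\, -\partial_\mathsf{CE}(\mathfrak{T}_1) - \Phi^2(\mu_1)\big)$ in $C^3_\mathsf{RB}(\mathfrak{g}, \mathfrak{g})$ vanishes, i.e. when both $\delta_\mathsf{CE}(\mu_1) = 0$ and $-\partial_\mathsf{CE}(\mathfrak{T}_1) - \Phi^2(\mu_1) = 0$ hold. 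So the whole of the first assertion reduces to matching these two identities against the $n=1$ cases of (\ref{sys-1}) and (\ref{sys-2}).

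First I would read off the $n=1$ case of (\ref{sys-1}). Writing out its three cyclic terms with $\mu_0 = [~,~]$ and collecting, this is exactly the Chevalley--Eilenberg $2$-cocycle identity for the adjoint representation, so $\delta_\mathsf{CE}(\mu_1) = 0$ is immediate. The substantive step is the $n=1$ case of (\ref{sys-2}). Here I would expand $\partial_\mathsf{CE}(\mathfrak{T}_1)$ using $\widetilde{\rho}(x)u = [\mathfrak{T}(x), u] - \mathfrak{T}([x,u])$ together with $[x,y]_\mathfrak{T} = [\mathfrak{T}(x), y] + [x, \mathfrak{T}(y)] + \lambda[x,y]$, and separately expand $\Phi^2(\mu_1)$, whose telescoping definition specialises to $\Phi^2(\mu_1)(x,y) = \mu_1(\mathfrak{T}x, \mathfrak{T}y) - \lambda\,\mathfrak{T}(\mu_1(x,y)) - \mathfrak{T}(\mu_1(\mathfrak{T}x, y)) - \mathfrak{T}(\mu_1(x, \mathfrak{T}y))$. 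Then I would check term-by-term that $\partial_\mathsf{CE}(\mathfrak{T}_1) + \Phi^2(\mu_1)$ agrees with the rearranged $t^1$-coefficient of (\ref{sys-2}). This sign-and-term bookkeeping is the main obstacle: each monomial built from a single copy of $\mathfrak{T}_1$ or $\mu_1$ and the structure maps $\mathfrak{T}, [~,~]$ must be paired off, and the weight-$\lambda$ contributions require particular care. Once the identification is verified we obtain $-\partial_\mathsf{CE}(\mathfrak{T}_1) - \Phi^2(\mu_1) = 0$, completing the cocycle claim.

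For the second assertion, suppose $(\mu_t, \mathfrak{T}_t)$ and $(\mu_t', \mathfrak{T}_t')$ are equivalent via $\phi_t = \sum_{i \ge 0}\phi_i t^i$ with $\phi_0 = \mathrm{id}_\mathfrak{g}$. I would extract the coefficient of $t^1$ from (\ref{sys-3}) and (\ref{sys-4}). The former gives $\mu_1 - \mu_1' = -\delta_\mathsf{CE}(\phi_1)$, since $(\delta_\mathsf{CE}\phi_1)(x,y) = [\phi_1 x, y] + [x, \phi_1 y] - \phi_1[x,y]$; the latter gives $\mathfrak{T}_1 - \mathfrak{T}_1' = \phi_1(\mathfrak{T}x) - \mathfrak{T}(\phi_1 x) = \Phi^1(\phi_1)$, using $\Phi^1(f)(x) = f(\mathfrak{T}x) - \mathfrak{T}(f(x))$. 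Taking the $1$-cochain $(\gamma, v) = (-\phi_1, 0) \in C^1_\mathsf{RB}(\mathfrak{g}, \mathfrak{g})$, a direct computation then yields $\delta_\mathsf{RB}(-\phi_1, 0) = \big(\delta_\mathsf{CE}(-\phi_1),\, -\partial_\mathsf{CE}(0) - \Phi^1(-\phi_1)\big) = (\mu_1 - \mu_1',\, \mathfrak{T}_1 - \mathfrak{T}_1')$, where the second entry uses linearity of $\Phi^1$ and $\partial_\mathsf{CE}(0)=0$. Hence $(\mu_1, \mathfrak{T}_1)$ and $(\mu_1', \mathfrak{T}_1')$ differ by a coboundary and define the same class in $H^2_\mathsf{RB}(\mathfrak{g}, \mathfrak{g})$, which is the claim.
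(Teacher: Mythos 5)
Your proposal is correct and follows essentially the same route as the paper: extract the $t^1$-coefficients of the deformation equations, identify the first with $\delta_\mathsf{CE}\mu_1=0$ and the second with $-\partial_\mathsf{CE}(\mathfrak{T}_1)-\Phi^2(\mu_1)=0$ (your expansions of $\Phi^2(\mu_1)$ and $\partial_\mathsf{CE}(\mathfrak{T}_1)$ are the right ones and the term-matching does close), and then read off from the $t^1$-coefficients of the equivalence equations that the two cocycles differ by $\delta_\mathsf{RB}(\mp\phi_1,0)$. Your choice of the cochain $(-\phi_1,0)$ instead of the paper's $(\phi_1,0)$ is only the trivial sign difference between writing the coboundary as $(\mu_1,\mathfrak{T}_1)-(\mu_1',\mathfrak{T}_1')$ or its negative.
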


\begin{proof}
Since $(\mu_t, \mathfrak{T}_t)$ is a formal $1$-parameter deformation, we get from (\ref{sys-1}) and (\ref{sys-2}) for $n=1$ that
\begin{align*}
[x, \mu_1 (y,z)] + \mu_1 (x, [y,z]) + [y, \mu_1 (z,x)] + \mu_1 (y, [z,x]) + [z, \mu_1 (x,y)] + \mu_1 (z, [x,y]) = 0
\end{align*}
and
\begin{align*}
\mu_1 & (\mathfrak{T}(x) , \mathfrak{T}(y)) + [ \mathfrak{T}_1 (x) , \mathfrak{T}(y)] + [ \mathfrak{T} (x) , \mathfrak{T}_1 (y)] \\
&= \mathfrak{T}_1 ([\mathfrak{T}(x), y] + [x, \mathfrak{T}(y)]) + \mathfrak{T} \big(  [\mathfrak{T}_1(x), y] + [x, \mathfrak{T}_1(y)] + \mu_1 (\mathfrak{T}(x) , y) + \mu_1 (x, \mathfrak{T}(y))  \big) \\
& ~~ + \lambda ~ \mathfrak{T} (\mu_1 (x,y)) + \lambda ~ \mathfrak{T}_1 ([x,y]).
\end{align*}
The first identity is equivalent to $\delta_\mathsf{CE} \mu_1 = 0$ while the second identity is equivalent to $- \partial_\mathsf{CE} (\mathfrak{T}_1) - \Phi^2 (\mu_1) = 0$. This implies that
\begin{align*}
\delta_\mathsf{RB} (\mu_1, \mathfrak{T}_1) = (\delta_\mathsf{CE} \mu_1,~ - \partial_\mathsf{CE} (\mathfrak{T}_1) - \Phi^2 (\mu_1) ) = 0.
\end{align*}
This proves the first part. For the second part, we let $(\mu_t, \mathfrak{T}_t)$ and $(\mu_t', \mathfrak{T}_t')$ be two equivalent deformations. For $n =1$, it follows from (\ref{sys-3}) and (\ref{sys-4}) that
\begin{align*}
(\mu_1' - \mu_1)(x,y) =~& [\phi_1(x), y] + [x, \phi_1(y)] - \phi_1 [x, y],\\
(\mathfrak{T}_1' - \mathfrak{T}_1)(x) =~& \mathfrak{T} ( \phi_1 (x)) -  \phi_1 (\mathfrak{T}(x)).
\end{align*}
Hence $(\mu_1', \mathfrak{T}_1') - (\mu_1, \mathfrak{T}_1) = (\delta_\mathsf{CE} \phi_1, - \Phi^1 (\phi_1)) = \delta_\mathsf{RB} (\phi_1, 0)$. This shows that $(\mu_1, \mathfrak{T}_1)$ and $(\mu_1', \mathfrak{T}_1')$ correspond to the same cohomology class in $H^2_\mathsf{RB}(\mathfrak{g}, \mathfrak{g})$. This completes the proof.
\end{proof}

We will now prove the following interesting result about formal $1$-parameter deformations.

\begin{thm}
Let $(\mathfrak{g}, \mathfrak{T})$ be a $\lambda$-weighted Rota-Baxter Lie algebra. If $H^2_\mathsf{RB} (\mathfrak{g}, \mathfrak{g}) = 0$ then any formal $1$-parameter deformation of $(\mathfrak{g}, \mathfrak{T})$ is equivalent to the trivial one $(\mu_t' = \mu, \mathfrak{T}_t' = \mathfrak{T}).$
\end{thm}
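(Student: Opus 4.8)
The plan is to run the classical order-by-order rigidity argument: remove the correction terms of the deformation one degree at a time by a formal gauge transformation, and then assemble these into a single formal isomorphism via a $t$-adic limit. Concretely, I would induct on the order of the lowest nonzero correction. Suppose $(\mu_t, \mathfrak{T}_t)$ already agrees with the trivial deformation $(\mu, \mathfrak{T})$ up to order $n-1$, so that the first possibly-nonzero correction is the pair $(\mu_n, \mathfrak{T}_n)$. Reading off the coefficient of $t^n$ in the deformation equations (\ref{sys-1}) and (\ref{sys-2}) and using that $\mu_i = \mathfrak{T}_i = 0$ for $1 \le i \le n-1$, all the cross terms drop out and the surviving identities are exactly $\delta_\mathsf{CE}\mu_n = 0$ and $-\partial_\mathsf{CE}(\mathfrak{T}_n) - \Phi^2(\mu_n) = 0$. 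This is the order-$n$ analogue of the first-order computation in Theorem \ref{2-co} and shows $(\mu_n, \mathfrak{T}_n) \in Z^2_\mathsf{RB}(\mathfrak{g}, \mathfrak{g})$. Since $H^2_\mathsf{RB}(\mathfrak{g},\mathfrak{g}) = 0$, this $2$-cocycle is a coboundary.

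The delicate point, and the step I expect to be the main obstacle, is that an equivalence induced by $\phi_t = \mathrm{id} + \phi_n t^n$ only shifts the infinitesimal by a coboundary of the \emph{restricted} form $\delta_\mathsf{RB}(\phi_n, 0)$ (this is precisely the computation in the proof of Theorem \ref{2-co}, carried out at order $n$ rather than order $1$), whereas vanishing of $H^2_\mathsf{RB}$ a priori only furnishes a coboundary $\delta_\mathsf{RB}(\gamma, v)$ with a possibly nonzero $0$-cochain component $v \in \mathfrak{g}$. The resolution is the observation that $v$ can always be absorbed into the $\mathrm{Hom}(\mathfrak{g},\mathfrak{g})$-slot: for the adjoint representation one checks $\Phi^1(\mathrm{ad}_v) = -\partial_\mathsf{CE}(v)$, and since $\mathrm{ad}_v$ is an inner derivation, $\delta_\mathsf{CE}(\mathrm{ad}_v) = 0$. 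Hence
\begin{align*}
\delta_\mathsf{RB}(\gamma, v) = \big( \delta_\mathsf{CE}\gamma,\ -\partial_\mathsf{CE}(v) - \Phi^1(\gamma) \big) = \big( \delta_\mathsf{CE}(\gamma - \mathrm{ad}_v),\ -\Phi^1(\gamma - \mathrm{ad}_v) \big) = \delta_\mathsf{RB}(\gamma - \mathrm{ad}_v,\ 0),
\end{align*}
so every element of $B^2_\mathsf{RB}(\mathfrak{g},\mathfrak{g})$ is realized as $\delta_\mathsf{RB}(\phi_n, 0)$ with $\phi_n := \gamma - \mathrm{ad}_v$. In particular $(\mu_n, \mathfrak{T}_n) = \delta_\mathsf{RB}(\phi_n, 0)$ for a genuine $\phi_n \in \mathrm{Hom}(\mathfrak{g},\mathfrak{g})$.

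With $\phi_n$ in hand I would take the gauge transformation $\phi_t = \mathrm{id}_\mathfrak{g} - \phi_n t^n$ (the sign being fixed by the convention of Theorem \ref{2-co}) and define the equivalent deformation $(\mu_t', \mathfrak{T}_t')$ by transporting $(\mu_t, \mathfrak{T}_t)$ along $\phi_t$, i.e. $\phi_t(\mu_t'(x,y)) = \mu_t(\phi_t x, \phi_t y)$ and $\phi_t \circ \mathfrak{T}_t' = \mathfrak{T}_t \circ \phi_t$. Because $\phi_t = \mathrm{id} + O(t^n)$, all terms below order $n$ are unchanged, while the order-$n$ reading of the equivalence relations (\ref{sys-3})--(\ref{sys-4}) gives $(\mu_n', \mathfrak{T}_n') = (\mu_n, \mathfrak{T}_n) - \delta_\mathsf{RB}(\phi_n, 0) = 0$. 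Thus $(\mu_t', \mathfrak{T}_t')$ agrees with $(\mu, \mathfrak{T})$ up to order $n$, completing the inductive step.

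Finally I would assemble the gauge transformations $\{\, \mathrm{id} - \phi_{n_k} t^{n_k} \,\}$ produced at successive orders $n_1 < n_2 < \cdots$ into a single formal isomorphism by composing them. Since the $k$-th factor is $\mathrm{id} + O(t^{n_k})$ with $n_k \to \infty$, the infinite composite converges in the $t$-adic topology to a well-defined formal isomorphism $\Phi_t = \mathrm{id}_\mathfrak{g} + O(t)$ carrying $(\mu_t, \mathfrak{T}_t)$ to the trivial deformation $(\mu, \mathfrak{T})$. The one routine point to verify along the way is that the transported pair $(\mu_t', \mathfrak{T}_t')$ is again a genuine formal deformation and that its order-$n$ variation is exactly $-\delta_\mathsf{RB}(\phi_n, 0)$; both follow by expanding the two equivalence conditions degree by degree, precisely as in the proof of Theorem \ref{2-co}.
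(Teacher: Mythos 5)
Your proof is correct, and it is the same argument as the paper's: read off the lowest-order correction of the deformation, show it is a $2$-cocycle, use $H^2_\mathsf{RB}(\mathfrak{g},\mathfrak{g})=0$ to cobound it, cancel it with a gauge transformation $\mathrm{id}_\mathfrak{g}\pm\phi_n t^n$, and iterate (the paper compresses the iteration into ``repeating this argument''; your $t$-adic convergence remark just makes the infinite composite explicit).

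The one place where you go beyond the paper is also the one place where the printed proof is sloppy, so it is worth recording. The paper writes $(\mu_1,\mathfrak{T}_1)=\delta_\mathsf{RB}(\psi_1,v)$ and then gauges by $\phi_t=\mathrm{id}_\mathfrak{g}+\psi_1 t$, asserting that the linear terms of the transported deformation vanish; when $v\neq 0$ this only disposes of the $\mathrm{Hom}(\wedge^2\mathfrak{g},\mathfrak{g})$-component and leaves a term $\partial_\mathsf{CE}(v)$ in the second slot. Your absorption lemma is exactly the missing justification. Only its sign needs adjusting to the paper's conventions: since the paper's Chevalley--Eilenberg differential carries the factor $(-1)^{i+n}$, one has $\delta_\mathsf{CE}(v)=\mathrm{ad}_v$ (where $\mathrm{ad}_v(x)=[v,x]$) and $\Phi^1(\mathrm{ad}_v)=+\partial_\mathsf{CE}(v)$, so the absorbed cochain should be $\gamma+\mathrm{ad}_v$ rather than $\gamma-\mathrm{ad}_v$. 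A sign-proof way to state your lemma, valid under any convention, is to note that $\delta_\mathsf{RB}(v)=(\delta_\mathsf{CE}(v),-v)$ is a $1$-coboundary, hence
\begin{align*}
\delta_\mathsf{RB}(\gamma,v)=\delta_\mathsf{RB}\big((\gamma,v)+\delta_\mathsf{RB}(v)\big)=\delta_\mathsf{RB}\big(\gamma+\delta_\mathsf{CE}(v),\,0\big),
\end{align*}
which realizes every $2$-coboundary with vanishing second component, as you need. With that one sign correction your argument is complete --- and in fact more complete than the paper's own proof of this theorem.
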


\begin{proof}
Let $(\mu_t, \mathfrak{T}_t)$ be a formal $1$-parameter deformation of the $\lambda$-weighted Rota-Baxter Lie algebra $(\mathfrak{g}, \mathfrak{T})$. It follows from Theorem \ref{2-co} that $(\mu_1, \mathfrak{T}_1)$ is a $2$-cocycle. Thus, from the hypothesis, there exists $(\psi_1, v) \in \mathrm{Hom}(\mathfrak{g}, \mathcal{V}) \oplus \mathcal{V}$ such that
\begin{align}\label{fin-def}
(\mu_1, \mathfrak{T}_1) = \delta_\mathsf{RB} (\psi_1, v) = (\delta_\mathsf{CE}(\psi_1), ~-\partial_\mathsf{CE} x - \Phi^1 (\psi_1)).
\end{align} 
Let $\phi_1 : \mathfrak{g} \rightarrow \mathfrak{g}$ be the map $\phi_1 = \mathrm{id}_\mathfrak{g} + \phi_1 t$. Then
\begin{align*}
\big( \overline{\mu}_t  = \phi_t^{-1} \circ \mu_t \circ (\phi_t \otimes \phi_t),~ \overline{\mathfrak{T}}_t = \phi_t^{-1} \circ \mathfrak{T}_t \circ \phi_t \big)
\end{align*}
is a deformation of $(\mathfrak{g}, \mathfrak{T})$ equivalent to $(\mu_t, \mathfrak{T}_t)$. Using (\ref{fin-def}), it can be easily check that the linear terms of $\overline{\mu}_t$ and $\overline{\mathfrak{T}}_t$ are vanish. In other words,  $\overline{\mu}_t$ and $\overline{\mathfrak{T}}_t$ are of the form
\begin{align*}
 \overline{\mu}_t = \mu + \overline{\mu}_2 t^2 + \cdots ~~ \text{ and } ~~ \overline{\mathfrak{T}}_t = \mathfrak{T} + \overline{\mathfrak{T}}_2 t^2 + \cdots .
\end{align*}
Hence by repeating this argument, we conclude that $(\mu_t, \mathfrak{T}_t)$ is equivalent to $(\mu_t' = \mu , \mathfrak{T}_t' = \mathfrak{T})$. Hence the proof.
\end{proof}

\begin{remark}
A $\lambda$-weighted Rota-Baxter Lie algebra $(\mathfrak{g}, \mathfrak{T})$ is said to be rigid if any deformation of it is equivalent to the trivial one. It follows from the above theorem that the vanishing of the second cohomology group $H^2_\mathsf{RB} (\mathfrak{g}, \mathfrak{g})$ is a sufficient condition for the rigidity of $(\mathfrak{g}, \mathfrak{T})$.
\end{remark}

\section{Rota-Baxter paired operators}\label{sec-rbp}

%Graph characterization \textcolor{red}{page 5}

Let $\mathfrak{g}$ be a Lie algebra and $\mathcal{V} = (\mathcal{V}, \rho)$ be a representation of it. In this section, we introduce weighted Rota-Baxter paired operators that are related to weighted Rota-Baxter Lie algebras together with their representations. We introduce a differential graded Lie algebra whose Maurer-Cartan elements are weighted Rota-Baxter paired operators. Using such characterization, we define the cohomology of Rota-Baxter paired operators. Finally, we discuss deformations of weighted Rota-Baxter paired operators in terms of cohomology.

\begin{defn}
Let $\mathfrak{g}$ be a Lie algebra and $\mathcal{V} = (\mathcal{V}, \rho)$ be a representation of it. A pair $(\mathfrak{T}, \mathcal{T})$ of linear maps $\mathfrak{T} : \mathfrak{g} \rightarrow \mathfrak{g}$ and $\mathcal{T} : \mathcal{V} \rightarrow \mathcal{V}$ is said to be a $\lambda$-weighted Rota-Baxter paired operators if they satisfy the followings: for $x, y \in \mathfrak{g}$ and $u \in \mathcal{V}$,
\begin{align*}
[\mathcal{T}(x) , \mathfrak{T}(y)] =~& \mathfrak{T} \big( [\mathfrak{T}(x), y] + [ x, \mathfrak{T} (y)] + \lambda [x, y]   \big), \\
\rho (Tx) (\mathcal{T} u) =~& \mathcal{T} \big( \rho (\mathfrak{T}x) u + \rho (x) (\mathcal{T}u) + \lambda \rho (x) u  \big),
\end{align*}
\end{defn}

\begin{remark}
Observe that the first condition is equivalent to say that $(\mathfrak{g}, \mathfrak{T})$ is a $\lambda$-weighted Rota-Baxter Lie algebra, while the second condition is equivalent to say that $(\mathcal{V}, \mathcal{T})$ is a representation of $(\mathfrak{g}, \mathfrak{T})$. Therefore, $\lambda$-weighted Rota-Baxter paired operators are closely related with weighted Rota-Baxter Lie algebras and their representations. Hence examples of Rota-Baxter paired operators follow from the examples of Section \ref{sec-2}.
\end{remark}

\medskip

Let $\mathfrak{g}$ be a Lie algebra. Then the direct sum $\mathfrak{g} \oplus \mathfrak{g}$ carries a Lie algebra structure with bracket given by
\begin{align*}
[(x,x'), (y,y')]_\lambda = \big( [x,y],~ [x, y'] + [x', y] + \lambda [x', y']  \big).
\end{align*}
We denote this Lie algebra by $(\mathfrak{g} \oplus \mathfrak{g})_\lambda$. Moreover, if $\mathcal{V} = (\mathcal{V}, \rho)$ is a representation of the Lie algebra $\mathfrak{g}$, then $\mathcal{V} \oplus \mathcal{V}$ can be equipped with a representation of the Lie algebra $(\mathfrak{g} \oplus \mathfrak{g})_\lambda$ with the action given by
\begin{align*}
\rho_\lambda \big( (x,x') \big)(u,u')  = \big( \rho(x) u,~ \rho(x) u' + \rho (x') u + \lambda \rho (x') u'  \big).
\end{align*}
Hence the direct sum $\mathfrak{g} \oplus \mathfrak{g} \oplus \mathcal{V} \oplus \mathcal{V}$ carries the semidirect product Lie algebra structure whose bracket is explicitly given by
\begin{align*}
&\{ \!  [ (x,x', u, u'), (y, y', v, v') ] \! \}  \\
&= \big(   [x,y], ~ [x, y']+ [x', y]+ \lambda [x', y'], ~ \rho (x) v - \rho (y) u, ~ \rho (x) v' + \rho (x') v + \lambda (x') v' - \rho(y) u' - \rho (y') u - \rho (y') u' \big).
\end{align*}
We denote this Lie algebra by $(\mathfrak{g} \oplus \mathfrak{g} \oplus \mathcal{V} \oplus \mathcal{V})_\lambda$. The following proposition is straightforward, hence we omit the details.

\begin{prop}
Let $\mathfrak{g}$ be a Lie algebra and $\mathcal{V}$ be a representation of it. A pair $(\mathfrak{T}, \mathcal{T})$ of linear maps $\mathfrak{T} : \mathfrak{g} \rightarrow \mathfrak{g}$ and $\mathcal{T} : \mathcal{V} \rightarrow \mathcal{V}$ forms a $\lambda$-weighted Rota-Baxter paired operators if and only if
\begin{align*}
Gr_{\mathfrak{T}, \mathcal{T}} := \{ (\mathfrak{T}x, x, \mathcal{T}u, u)~|~ x \in \mathfrak{g}, ~ u \in \mathcal{V} \} \subset \mathfrak{g} \oplus \mathfrak{g} \oplus \mathcal{V} \oplus \mathcal{V}
\end{align*}
is a Lie subalgebra of $(\mathfrak{g} \oplus \mathfrak{g} \oplus \mathcal{V} \oplus \mathcal{V})_\lambda$.
\end{prop}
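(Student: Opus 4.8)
The plan is to turn the subalgebra condition into a componentwise identity for the bracket of two generic elements of the graph and then to recognize the two resulting equations as the defining conditions of a paired operator. First I would parametrize $Gr_{\mathfrak{T},\mathcal{T}}$ by $\mathfrak{g}\oplus\mathcal{V}$, writing a typical element as $(\mathfrak{T}a,a,\mathcal{T}p,p)$ with $a\in\mathfrak{g}$ and $p\in\mathcal{V}$. Since an element of $\mathfrak{g}\oplus\mathfrak{g}\oplus\mathcal{V}\oplus\mathcal{V}$ lies in $Gr_{\mathfrak{T},\mathcal{T}}$ exactly when its first component is $\mathfrak{T}$ applied to its second and its third component is $\mathcal{T}$ applied to its fourth, the subalgebra property is equivalent to the following: denoting the four components of the semidirect-product bracket of $(\mathfrak{T}a,a,\mathcal{T}p,p)$ and $(\mathfrak{T}b,b,\mathcal{T}q,q)$ by $(B_1,B_2,B_3,B_4)$, one has $B_1=\mathfrak{T}(B_2)$ and $B_3=\mathcal{T}(B_4)$ for all $a,b,p,q$.

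Next I would substitute the graph elements into the explicit bracket. The second and first components are $B_2=[\mathfrak{T}a,b]+[a,\mathfrak{T}b]+\lambda[a,b]=[a,b]_{\mathfrak{T}}$ and $B_1=[\mathfrak{T}a,\mathfrak{T}b]$, so the requirement $B_1=\mathfrak{T}(B_2)$ is precisely the Rota-Baxter identity (\ref{lam-rota-id}). These two components are independent of $p,q$, so this part of the argument decouples entirely from the $\mathcal{V}$-components.

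For the action part I would compute $B_3=\rho(\mathfrak{T}a)\mathcal{T}q-\rho(\mathfrak{T}b)\mathcal{T}p$ together with $B_4=\bigl(\rho(\mathfrak{T}a)q+\rho(a)\mathcal{T}q+\lambda\rho(a)q\bigr)-\bigl(\rho(\mathfrak{T}b)p+\rho(b)\mathcal{T}p+\lambda\rho(b)p\bigr)$ and then impose $B_3=\mathcal{T}(B_4)$. Applying the representation identity (\ref{lam-rep}) once with $(x,u)=(a,q)$ and once with $(x,u)=(b,p)$ shows that $\mathcal{T}(B_4)$ collapses to $\rho(\mathfrak{T}a)\mathcal{T}q-\rho(\mathfrak{T}b)\mathcal{T}p=B_3$, so this condition is automatic once (\ref{lam-rep}) holds; together with the previous paragraph this gives the forward implication.

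For the converse I would specialize. Because $B_1,B_2$ do not involve $p,q$, the subalgebra hypothesis already forces (\ref{lam-rota-id}) for all $a,b$; and putting $b=0$ and $p=0$ in $B_3=\mathcal{T}(B_4)$ isolates exactly $\rho(\mathfrak{T}a)\mathcal{T}q=\mathcal{T}\bigl(\rho(\mathfrak{T}a)q+\rho(a)\mathcal{T}q+\lambda\rho(a)q\bigr)$, which is (\ref{lam-rep}). The whole argument is routine bilinear bookkeeping; the only step needing genuine care is the action component $B_4$, where the signs and the six terms must be tracked correctly and the two invocations of (\ref{lam-rep}) must recombine with no residue. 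That is the place where I expect any slip to occur, but no conceptual obstacle arises.
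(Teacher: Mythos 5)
Your proof is correct and is precisely the routine verification that the paper itself omits (it declares the proposition ``straightforward'' and gives no details): parametrize the graph, note that membership in $Gr_{\mathfrak{T},\mathcal{T}}$ means first component $=\mathfrak{T}(\text{second})$ and third $=\mathcal{T}(\text{fourth})$, and check that $B_1=\mathfrak{T}(B_2)$, $B_3=\mathcal{T}(B_4)$ unpack to (\ref{lam-rota-id}) and (\ref{lam-rep}), with the specialization $b=0$, $p=0$ giving the converse for the action part. One remark: your expression for $B_4$ uses the correct semidirect-product action coming from $\rho_\lambda$, i.e.\ $\rho(\mathfrak{T}a)q+\rho(a)\mathcal{T}q+\lambda\rho(a)q-\rho(\mathfrak{T}b)p-\rho(b)\mathcal{T}p-\lambda\rho(b)p$, whereas the paper's displayed bracket on $(\mathfrak{g}\oplus\mathfrak{g}\oplus\mathcal{V}\oplus\mathcal{V})_\lambda$ contains two typos (a missing $\rho$ in the term $\lambda\rho(x')v'$ and a missing $\lambda$ on the final term $-\lambda\rho(y')u'$); your reading is the intended one, and the argument goes through exactly as you wrote it.
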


%\section{Cohomology of Rota-Baxter paired operators and applications}

Let $\mathfrak{g}$ be a Lie algebra and $\mathcal{V}$ be a representation of it. We denote $\mathfrak{g}'$ and $\mathcal{V}'$ by another copies of $\mathfrak{g}$ and $\mathcal{V}$, respectively. Consider the vector space $\mathfrak{l} = \mathfrak{g} \oplus \mathfrak{g}' \oplus \mathcal{V} \oplus \mathcal{V}'$ and the Nijenhuis-Richardson bracket on the space $\oplus_{n \geq 1} \mathrm{Hom}(\wedge^n \mathfrak{l}, \mathfrak{l})$ of skew-symmetric multilinear maps on $\mathfrak{l}$, given by
\begin{align*}
[f, g]_\mathsf{NR} := f \diamond g - (-1)^{(m-1)(n-1)}~ g \diamond f ,
\end{align*} 
where
\begin{align*}
(f \diamond g) (l_1, \ldots, l_{m+n-1}) = \sum_{\sigma \in \mathbb{S}_{(n, m-1)}} (-1)^\sigma ~f\big( g (l_{\sigma(1)}, \ldots, l_{\sigma (n)}), l_{\sigma (n+1)}, \ldots, l_{\sigma (m+n-1)}  \big),
\end{align*}
for $f \in \mathrm{Hom}(\wedge^m \mathfrak{l}, \mathfrak{l})$ and $g \in \mathrm{Hom}(\wedge^n \mathfrak{l}, \mathfrak{l})$. The Nijenhuis-Richardson bracket $[~,~]_\mathsf{NR}$ makes the shifted graded vector space $\oplus_{n \geq 0} \mathrm{Hom}(\wedge^{n+1} \mathfrak{l}, \mathfrak{l})$ into a graded Lie algebra. Moreover, it can be easily checked that the graded subspace $\mathfrak{a} = \oplus_{n \geq 0} \big( \mathrm{Hom}(\wedge^{n+1} \mathfrak{g}', \mathfrak{g}) \oplus \mathrm{Hom}(\wedge^n \mathfrak{g}' \otimes \mathcal{V}', \mathcal{V} )  \big)$ is an abelian subalgebra.

In the subsequent, we will use the following notations. Let

\begin{itemize}
\item[$\diamondsuit$] $\mu \in \mathrm{Hom}(\wedge^2 \mathfrak{g}, \mathfrak{g})$ and $\mu' \in \mathrm{Hom}(\wedge^2 \mathfrak{g}', \mathfrak{g}')$ denote the multiplications of $\mathfrak{g}$ and $\mathfrak{g}'$, respectively;

\item[$\diamondsuit$] $\mathrm{ad}' : \mathfrak{g} \times \mathfrak{g}' \rightarrow \mathfrak{g}'$ be the adjoint representation of $\mathfrak{g}$ on $\mathfrak{g}'$;

\item[$\diamondsuit$] $\rho : \mathfrak{g} \times \mathcal{V} \rightarrow \mathcal{V}$ and $\rho' : \mathfrak{g} \times \mathcal{V}' \rightarrow \mathcal{V}'$ denote the representations of $\mathfrak{g}$ on $\mathcal{V}$ and $\mathcal{V}'$, respectively;

\item[$\diamondsuit$] $\varrho : \mathfrak{g}' \times \mathcal{V} \rightarrow \mathcal{V}$ and $\varrho' : \mathfrak{g}' \times \mathcal{V}' \rightarrow \mathcal{V}'$ denote the representations of $\mathfrak{g}'$ on $\mathcal{V}$ and $\mathcal{V}'$, respectively.
\end{itemize}

We consider the element
\begin{align*}
\theta = \mu + \mathrm{ad}' + \rho + \rho' + \varrho~ \in ~\mathrm{Hom}(\wedge^2 \mathfrak{l}, \mathfrak{l}).
\end{align*}
It is easy to see that $\theta$ is a Maurer-Cartan element in the graded Lie algebra $\big( \oplus_{n \geq 0} \mathrm{Hom}(\wedge^{n+1} \mathfrak{l}, \mathfrak{l}), [~,~]_\mathsf{NR}  \big)$.  Therefore, it induces a differential $d_\theta := [\theta, -]_\mathsf{NR}$ on the graded vector space $\oplus_{n \geq 0} \mathrm{Hom}(\wedge^{n+1} \mathfrak{l}, \mathfrak{l})$. Hence by the derived bracket construction of Voronov \cite{voro}, the shifted graded space 
\begin{align*}
\mathfrak{a}[-1] = \oplus_{n \geq 1} \big( \mathrm{Hom}(\wedge^{n} \mathfrak{g}', \mathfrak{g}) \oplus \mathrm{Hom}(\wedge^{n-1} \mathfrak{g}' \otimes \mathcal{V}', \mathcal{V} )  \big)
\end{align*}
inherits a graded Lie algebra structure with bracket given by
\begin{align}\label{derived-for}
\llbracket {\bf P}, {\bf Q} \rrbracket = (-1)^m ~[d_\theta ({\bf P}), {\bf Q} ]_\mathsf{NR},
\end{align}
for ${\bf P} = ({\bf P}_1, {\bf P}_2) \in  \mathrm{Hom}(\wedge^{m} \mathfrak{g}', \mathfrak{g}) \oplus \mathrm{Hom}(\wedge^{m-1} \mathfrak{g}' \otimes \mathcal{V}', \mathcal{V} )$ and ${\bf Q} = ({\bf Q}_1, {\bf Q}_2) \in \mathrm{Hom}(\wedge^{n} \mathfrak{g}', \mathfrak{g}) \oplus \mathrm{Hom}(\wedge^{n-1} \mathfrak{g}' \otimes \mathcal{V}', \mathcal{V} )$.

On the other hand, the element $\theta' = - (\mu' + \varrho') \in \mathrm{Hom}(\wedge^2 \mathfrak{l}, \mathfrak{l})$ is also a Maurer-cartan element in the graded Lie algebra $\big( \oplus_{n \geq 0} \mathrm{Hom}(\wedge^{n+1} \mathfrak{l}, \mathfrak{l}), [~,~]_\mathsf{NR}  \big)$. Hence it induces a differential $d_{\theta'} = [\theta', -]_\mathsf{NR}$ on $\oplus_{n \geq 1} \mathrm{Hom}(\wedge^{n} \mathfrak{l}, \mathfrak{l})$. The differential $d_{\theta'}$ restricts to a differential $\mathbbm{d}$ on the space $\mathfrak{a}[-1]$. Finally, the elements $\theta$ and $\theta'$ additionally satisfies $[\theta, \theta']= 0$ which in turn implies that $\mathbbm{d}$ is a derivation for the bracket $\llbracket ~,~\rrbracket$ on $\mathfrak{a}[-1]$. In other words, the triple $(\mathfrak{a}[-1], \llbracket ~, ~ \rrbracket, \mathbbm{d})$ is a differential graded Lie algebra.

\medskip

By replacing $\mathfrak{g}'$ by $\mathfrak{g}$ and replacing $\mathcal{V}'$ by $\mathcal{V}$, we get the following.

\begin{thm}
Let $\mathfrak{g}$ be a Lie algebra and $\mathcal{V}$ be a representation of it. Then the triple 
\begin{align*}
\big( \oplus_{n \geq 1} \big( \mathrm{Hom}(\wedge^{n} \mathfrak{g}, \mathfrak{g}) \oplus \mathrm{Hom}(\wedge^{n-1} \mathfrak{g} \otimes \mathcal{V}, \mathcal{V} )  \big), \llbracket ~, ~ \rrbracket, \mathbbm{d} \big)
\end{align*}
is a differential graded Lie algebra.  A pair ${\bf T} = (\mathfrak{T}, \mathcal{T})$ is a Rota-Baxter paired operators if and only if ${\bf T}$ is a Maurer-Cartan element in the above differential graded Lie algebra.
\end{thm}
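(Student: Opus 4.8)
The plan is to dispatch the two assertions in turn. For the differential graded Lie algebra structure I would simply invoke the construction carried out in the paragraphs immediately preceding the statement: from the two Maurer-Cartan elements $\theta$ and $\theta'$ of the Nijenhuis-Richardson graded Lie algebra $\big(\oplus_{n\geq 0}\mathrm{Hom}(\wedge^{n+1}\mathfrak{l},\mathfrak{l}),[~,~]_\mathsf{NR}\big)$, Voronov's derived bracket furnishes $\llbracket~,~\rrbracket$ on the abelian graded subspace $\mathfrak{a}[-1]$, the element $\theta'$ induces the differential $\mathbbm{d}$, and the compatibility $[\theta,\theta']_\mathsf{NR}=0$ makes $\mathbbm{d}$ a derivation of $\llbracket~,~\rrbracket$. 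The formal second copies $\mathfrak{g}'$ and $\mathcal{V}'$ were introduced precisely so that $\mathfrak{a}$ is an abelian subalgebra; setting $\mathfrak{g}'=\mathfrak{g}$ and $\mathcal{V}'=\mathcal{V}$, i.e.\ reading the copies only as a device recording domains and codomains, specializes this DGLA to the one in the statement. So the first claim needs no new computation.

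For the second assertion I would unwind the Maurer-Cartan equation $\mathbbm{d}({\bf T})+\tfrac12\llbracket{\bf T},{\bf T}\rrbracket=0$ for ${\bf T}=(\mathfrak{T},\mathcal{T})$, which lies in the arity-$1$ piece $\mathrm{Hom}(\mathfrak{g},\mathfrak{g})\oplus\mathrm{Hom}(\mathcal{V},\mathcal{V})$ of $\mathfrak{a}[-1]$. First I would compute the arity-$2$ map $d_\theta({\bf T})=[\theta,{\bf T}]_\mathsf{NR}$, using that ${\bf T}$ annihilates everything outside $\mathfrak{g}'\oplus\mathcal{V}'$ and that on each pair of homogeneous inputs only one component of $\theta=\mu+\mathrm{ad}'+\rho+\rho'+\varrho$ survives; in particular $\theta$ vanishes on $\wedge^2\mathfrak{g}'$ and on $\mathfrak{g}'\otimes\mathcal{V}'$, which kills several terms. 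Feeding the result into $\llbracket{\bf T},{\bf T}\rrbracket=-[d_\theta({\bf T}),{\bf T}]_\mathsf{NR}$ and separately evaluating $\mathbbm{d}({\bf T})=[\theta',{\bf T}]_\mathsf{NR}$, where $\theta'=-(\mu'+\varrho')$ carries the weight through $\mu'=\lambda[~,~]$ and $\varrho'=\lambda\rho$, I then test the Maurer-Cartan equation on the two types of homogeneous arguments.

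On $(x,y)\in\wedge^2\mathfrak{g}$ I expect $\mathbbm{d}({\bf T})(x,y)=\lambda\,\mathfrak{T}[x,y]$ and $\tfrac12\llbracket{\bf T},{\bf T}\rrbracket(x,y)=-[\mathfrak{T}x,\mathfrak{T}y]+\mathfrak{T}[\mathfrak{T}x,y]+\mathfrak{T}[x,\mathfrak{T}y]$, so that vanishing of the sum is exactly the weighted Rota-Baxter identity \eqref{lam-rota-id}. On $(x,u)\in\mathfrak{g}\otimes\mathcal{V}$ I expect $\mathbbm{d}({\bf T})(x,u)=\lambda\,\mathcal{T}(\rho(x)u)$ and $\tfrac12\llbracket{\bf T},{\bf T}\rrbracket(x,u)=-\rho(\mathfrak{T}x)(\mathcal{T}u)+\mathcal{T}(\rho(\mathfrak{T}x)u)+\mathcal{T}(\rho(x)(\mathcal{T}u))$, so that vanishing is exactly the representation identity \eqref{lam-rep}. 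By the Remark following the definition of Rota-Baxter paired operators, these two identities together say precisely that $(\mathfrak{T},\mathcal{T})$ is a $\lambda$-weighted Rota-Baxter paired operators, giving the claimed equivalence.

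The step I expect to be the main obstacle is the bookkeeping in the Nijenhuis-Richardson computations: tracking which component of $\theta$ or $\theta'$ is nonzero on a given pair of primed/unprimed inputs, tracking the codomain (primed versus unprimed) of each intermediate output — since ${\bf T}$ only acts on primed arguments — and handling the shuffle signs in $f\diamond g$, all while placing the scalar $\lambda$ correctly in $\mu'$ and $\varrho'$. Once these conventions are fixed the two evaluations are routine, and the factors of $2$ produced by the derived bracket cancel against the $\tfrac12$ in the Maurer-Cartan equation to leave exactly \eqref{lam-rota-id} and \eqref{lam-rep}.
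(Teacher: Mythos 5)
Your proposal follows essentially the same route as the paper: the DGLA structure is obtained by citing the preceding derived-bracket construction (Voronov, $\theta$, $\theta'$, $[\theta,\theta']_{\mathsf{NR}}=0$) specialized via $\mathfrak{g}'=\mathfrak{g}$, $\mathcal{V}'=\mathcal{V}$, and the Maurer--Cartan equation is then expanded componentwise, yielding exactly the paper's formulas $\tfrac12\llbracket\mathbf{T},\mathbf{T}\rrbracket_1(x,y)=\mathfrak{T}([\mathfrak{T}x,y]+[x,\mathfrak{T}y])-[\mathfrak{T}x,\mathfrak{T}y]$, $(\mathbbm{d}\mathbf{T})_1(x,y)=\lambda\mathfrak{T}[x,y]$, and their analogues on $\mathfrak{g}\otimes\mathcal{V}$, so that vanishing recovers identities \eqref{lam-rota-id} and \eqref{lam-rep}. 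Your reading that the weight enters through $\mu'=\lambda[~,~]$ and $\varrho'=\lambda\rho$ is the correct interpretation of the paper's (terser) computation, so the two proofs agree in substance.
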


\begin{proof}
The first part is clear from previous discussions. For any ${\bf T} = (\mathfrak{T}, \mathcal{T}) \in \mathrm{Hom}(\mathfrak{g}, \mathfrak{g}) \oplus \mathrm{Hom}(\mathcal{V}, \mathcal{V})$, by expanding (\ref{derived-for}) similar to \cite{tang}, we get that
\begin{align*}
\llbracket {\bf T}, {\bf T} \rrbracket_1 (x, y) =~& 2 \big( \mathfrak{T} ([\mathfrak{T}(x), y] + [x, \mathfrak{T}(y)]) - [\mathfrak{T}(x), \mathfrak{T}(y)]  \big),\\
\llbracket {\bf T}, {\bf T} \rrbracket_2 (x, u) =~& 2 \big( \mathcal{T} (   \rho (\mathfrak{T} x) u + \rho (x) (\mathcal{T} u) ) - \rho (\mathfrak{T}x)(\mathcal{T}u)   \big),
\end{align*}
for $x \in \mathfrak{g}$ and $u \in \mathcal{V}$. On the other hand, we can also check (similar to \cite{das-weighted}) that
\begin{align*}
(\mathbbm{d} {\bf T})_1 (x, y) = \lambda \mathfrak{T}[x, y] ~~~ \text{ and } ~~~ (\mathbbm{d} {\bf T})_2 (x, u) = \lambda \mathcal{T} (\rho(x) u).
\end{align*}
Hence it follows that $\mathbbm{d} {\bf T} + \frac{1}{2} \llbracket {\bf T}, {\bf T} \rrbracket = 0$ if and only if ${\bf T} = (\mathfrak{T}, \mathcal{T})$ is a Rota-Baxter paired operators.
\end{proof}

It follows from the above theorem that a Rota-Baxter paired operators ${\bf T} = (\mathfrak{T}, \mathcal{T})$ induces a differential
\begin{align*}
d_\mathbf{T} :  \mathrm{Hom}(\wedge^{n} \mathfrak{g}, \mathfrak{g}) \oplus \mathrm{Hom}(\wedge^{n-1} \mathfrak{g} \otimes \mathcal{V}, \mathcal{V} &\rightarrow \mathrm{Hom}(\wedge^{n+1} \mathfrak{g}, \mathfrak{g}) \oplus \mathrm{Hom}(\wedge^{n} \mathfrak{g} \otimes \mathcal{V}, \mathcal{V}, ~\text{ for } n \geq 1 ,
\end{align*}
\begin{align*}
d_\mathbf{T}& = \mathbbm{d} + \llbracket {\bf T}, - \rrbracket.
\end{align*}
We define $C^n_\mathsf{RBp}(\mathfrak{g}, \mathcal{V}) = \mathrm{Hom}(\wedge^{n} \mathfrak{g}, \mathfrak{g}) \oplus \mathrm{Hom}(\wedge^{n-1} \mathfrak{g} \otimes \mathcal{V}, \mathcal{V})$, for $n \geq 1$. Then $\{ C^*_\mathsf{RBp}(\mathfrak{g}, \mathcal{V}), d_\mathbf{T} \}$ is a cochain complex. The corresponding cohomology groups are called the cohomology of the Rota-Baxter paired operators $(\mathfrak{T}, \mathcal{T})$.

\medskip

Note that, for a Rota-Baxter paired operators ${\bf T} = (\mathfrak{T}, \mathcal{T})$, the differential $d_\mathbf{T}$ makes the tuple 
\begin{align*}
(\oplus_{n \geq 1} C^n_\mathsf{RBp}(\mathfrak{g}, \mathcal{V}), \llbracket ~,~ \rrbracket, d_\mathbf{T})
\end{align*}
into a differential graded Lie algebra. In the following, we will show that this differential graded Lie algebra governs the Maurer-Cartan deformations of ${\bf T}$.

\begin{prop}
Let $\mathfrak{g}$ be a Lie algebra and $\mathcal{V}$ be a representation of it. Let ${\bf T} = (\mathfrak{T}, \mathcal{T})$ be a Rota-Baxter paired operators. Then for a pair ${\bf T}' = (\mathfrak{T}', \mathcal{T}')$ of maps $\mathfrak{T}' : \mathfrak{g} \rightarrow \mathfrak{g}$ and $\mathcal{T}' : \mathcal{V} \rightarrow \mathcal{V}$, the sum $\mathbf{T} + \mathbf{T}' = (\mathfrak{T} + \mathfrak{T}', \mathcal{T} + \mathcal{T}')$ is a Rota-Baxter paired operators if and only if ${\bf T}'$ is a Maurer-Cartan element in the differential graded Lie algebra $(\oplus_{n \geq 1} C^n_\mathsf{RBp}(\mathfrak{g}, \mathcal{V}), \llbracket ~,~ \rrbracket, d_\mathbf{T})$.
\end{prop}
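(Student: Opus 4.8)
The plan is to reduce everything to the Maurer--Cartan characterization established in the preceding theorem, together with the standard fact that twisting a differential graded Lie algebra by a Maurer--Cartan element again yields a differential graded Lie algebra whose Maurer--Cartan elements are the ``relative'' ones. First I would recall that, by the preceding theorem, a pair ${\bf S} = (\mathfrak{S}, \mathcal{S}) \in C^1_\mathsf{RBp}(\mathfrak{g}, \mathcal{V})$ is a Rota--Baxter paired operators if and only if it satisfies the Maurer--Cartan equation $\mathbbm{d} {\bf S} + \tfrac{1}{2} \llbracket {\bf S}, {\bf S} \rrbracket = 0$ in the differential graded Lie algebra $(\oplus_{n \geq 1} C^n_\mathsf{RBp}(\mathfrak{g}, \mathcal{V}), \llbracket ~,~ \rrbracket, \mathbbm{d})$. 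Applying this to ${\bf S} = {\bf T} + {\bf T}'$, the assertion becomes the algebraic identity
\[
\mathbbm{d}({\bf T} + {\bf T}') + \tfrac{1}{2} \llbracket {\bf T} + {\bf T}', {\bf T} + {\bf T}' \rrbracket = d_{\bf T}({\bf T}') + \tfrac{1}{2} \llbracket {\bf T}', {\bf T}' \rrbracket,
\]
valid whenever ${\bf T}$ is itself a Rota--Baxter paired operators.

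Next I would simply expand the left-hand side using that $\mathbbm{d}$ is linear and $\llbracket ~,~ \rrbracket$ is bilinear. Since both ${\bf T}$ and ${\bf T}'$ lie in the same component $C^1_\mathsf{RBp}(\mathfrak{g}, \mathcal{V})$, which is the (odd) degree in which Maurer--Cartan elements live, the graded bracket restricted there is symmetric, so $\llbracket {\bf T}, {\bf T}' \rrbracket = \llbracket {\bf T}', {\bf T} \rrbracket$ and the two cross terms combine. This gives
\[
\Big( \mathbbm{d} {\bf T} + \tfrac{1}{2} \llbracket {\bf T}, {\bf T} \rrbracket \Big) + \big( \mathbbm{d} {\bf T}' + \llbracket {\bf T}, {\bf T}' \rrbracket \big) + \tfrac{1}{2} \llbracket {\bf T}', {\bf T}' \rrbracket.
\]
The first bracketed term vanishes because ${\bf T}$ is a Rota--Baxter paired operators (again by the preceding theorem), while the middle term is exactly $d_{\bf T}({\bf T}')$ by the definition $d_{\bf T} = \mathbbm{d} + \llbracket {\bf T}, - \rrbracket$ recorded just before the proposition. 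This establishes the identity above, and therefore ${\bf T} + {\bf T}'$ is a Rota--Baxter paired operators precisely when $d_{\bf T}({\bf T}') + \tfrac{1}{2} \llbracket {\bf T}', {\bf T}' \rrbracket = 0$, i.e. when ${\bf T}'$ is a Maurer--Cartan element of the twisted differential graded Lie algebra $(\oplus_{n \geq 1} C^n_\mathsf{RBp}(\mathfrak{g}, \mathcal{V}), \llbracket ~,~ \rrbracket, d_{\bf T})$.

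The only point requiring genuine care is the sign bookkeeping in the expansion: I must confirm that on the degree-$1$ component the graded bracket is indeed symmetric, so that the two cross terms add to $\llbracket {\bf T}, {\bf T}' \rrbracket + \llbracket {\bf T}', {\bf T} \rrbracket = 2\llbracket {\bf T}, {\bf T}' \rrbracket$, and that no spurious sign appears in $d_{\bf T} = \mathbbm{d} + \llbracket {\bf T}, - \rrbracket$. Both follow from the graded antisymmetry $\llbracket {\bf P}, {\bf Q} \rrbracket = -(-1)^{|{\bf P}||{\bf Q}|} \llbracket {\bf Q}, {\bf P} \rrbracket$ specialized to $|{\bf P}| = |{\bf Q}| = 1$. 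No computation in coordinates is needed here, since the explicit expansions of $\llbracket {\bf T}, {\bf T} \rrbracket$ and $\mathbbm{d} {\bf T}$ were already carried out in the proof of the preceding theorem; the present proposition is a purely formal consequence of those and of the twisting construction.
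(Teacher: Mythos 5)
Your proposal is correct and follows essentially the same route as the paper: expand $\mathbbm{d}({\bf T} + {\bf T}') + \tfrac{1}{2}\llbracket {\bf T} + {\bf T}', {\bf T} + {\bf T}'\rrbracket$ by bilinearity, cancel $\mathbbm{d}{\bf T} + \tfrac{1}{2}\llbracket {\bf T}, {\bf T}\rrbracket$ using that ${\bf T}$ is a Rota--Baxter paired operators, and recognize the remainder as $d_{\bf T}({\bf T}') + \tfrac{1}{2}\llbracket {\bf T}', {\bf T}'\rrbracket$, then invoke the Maurer--Cartan characterization from the preceding theorem. Your explicit verification that the bracket is symmetric on the degree-one component (so the cross terms combine to $2\llbracket {\bf T}, {\bf T}'\rrbracket$) is a point the paper uses silently, but it is the same argument.
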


\begin{proof}
We observe that
\begin{align*}
&\mathbbm{d} ({\bf T} + {\bf T}') + \frac{1}{2} \llbracket {\bf T} + {\bf T}', {\bf T} + {\bf T}' \rrbracket \\
&= \mathbbm{d} {\bf T} + \mathbbm{d} {\bf T}' + \frac{1}{2} \big( \llbracket {\bf T}, {\bf T} \rrbracket + 2 \llbracket {\bf T}, {\bf T}' \rrbracket + \llbracket {\bf T}', {\bf T}' \rrbracket \big) \\
&= \mathbbm{d} {\bf T}' + \llbracket {\bf T}, {\bf T}' \rrbracket + \frac{1}{2} \llbracket {\bf T}', {\bf T}' \rrbracket = d_{\bf T} ({\bf T}') + \frac{1}{2} \llbracket {\bf T}', {\bf T}' \rrbracket.
\end{align*}
Hence the result follows from the Maurer-Cartan characterization of Rota-Baxter paired operators.
\end{proof}

\begin{remark}
In \cite{das-weighted} the present author studied formal $1$-parameter deformations of weighted Rota-Baxter operators by keeping the underlying algebra intact. Given a Lie algebra $\mathfrak{g}$ and a representation $\mathcal{V}$, one could also study formal $1$-parameter deformations of Rota-Baxter paired operators by keeping the algebra $\mathfrak{g}$ and the representation $\mathcal{V}$ intact. Then it is easy to see that the above-defined cohomology governs such deformations. More precisely, the infinitesimals lie in the first cohomology group, and the obstruction to extending a finite order deformation lies in the second cohomology group.
\end{remark}

\medskip

\noindent {\bf Acknowledgements.} The research is supported by the fellowship of Indian Institute of Technology (IIT) Kanpur. 

\medskip

\medskip

%\noindent {\bf Data Availability Statement.} Data sharing is not applicable to this article as no new data were created or analyzed in this study.

\end{document}